\documentclass[11pt]{amsart}
\usepackage{amssymb,amsthm,amsfonts,mathrsfs}
\usepackage{amsmath,amscd}
\usepackage[hmargin=3cm,vmargin=3.5cm]{geometry}
\usepackage{hyperref}
\usepackage{color}
\usepackage{graphicx}
\usepackage{stmaryrd}  
\usepackage{tikz-cd}

\newtheorem{theorem}{Theorem}[section]

\newtheorem{prop}[theorem]{Proposition}

\newtheorem{corollary}{Corollary}


\let\oldtocsection=\tocsection
\let\oldtocsubsection=\tocsubsection
\renewcommand{\tocsection}[2]{\hspace{0em}\oldtocsection{#1}{#2}}
\renewcommand{\tocsubsection}[2]{\hspace{1em}\oldtocsubsection{#1}{#2}}

\usepackage{chngcntr}
\counterwithin{figure}{subsection}

\title{Universal construction of topological theories  in two dimensions}
\author{Mikhail Khovanov}
 \address{Department of Mathematics, Columbia University, New York, NY 10027, USA}
 \email{\href{mailto:khovanov@math.columbia.edu}{khovanov@math.columbia.edu}}
\date{July  9, 2020}

\begin{document}

\begin{abstract}
    We consider  Blanchet, Habegger, Masbaum and  Vogel's universal  construction  of topological theories in dimension two, using it to produce interesting theories that do not satisfy the usual two-dimensional TQFT axioms. Kronecker's  characterization of rational functions allows us to classify theories over a field with finite-dimensional state spaces and introduce their extension to theories with the ground ring the product of rings of symmetric functions in N and M variables. We look at several examples of  non-multiplicative theories and see Hankel matrices, Schur and supersymmetric Schur polynomials quickly emerge from these structures. The last section explains how an extension of the Robert-Wagner foam evaluation to overlapping foams gives the Sergeev-Pragacz formula for the supersymmetric Schur polynomials and the Day formula for the Toeplitz determinant of rational power series as special cases. 
\end{abstract}

\def\R{\mathbb R}
\def\Q{\mathbb Q}
\def\Z{\mathbb Z}
\def\N{\mathbb N} 
\def\C{\mathbb C}
\def\CP{\mathbb P}
\renewcommand\SS{\ensuremath{\mathbb{S}}}
\def\l{\lbrace}
\def\r{\rbrace}
\def\lra{\longrightarrow}
\def\Hom{\mathrm{Hom}}
\def\Id{\mathrm{Id}}
\def\mc{\mathcal}
\def\mf{\mathfrak} 
\def\Ext{\mathrm{Ext}}
\def\End{\mathrm{End}}
\def\mfgl{\mathfrak{gl}}
\def\mfglN{\mathfrak{gl}_N} 

\def\Cob{\mathrm{Cob}} 
\def\Cobn{\Cob_n} 
\def\Cobk{\mathrm{Cob}_{2,k}}  
\newcommand\Cobkk[1]{\mathrm{Cob}_{2,#1}}
\def\CobOne{\Cob_1}
\def\CobTwo{\Cob_2}
\def\Pa{\mathrm{Pa}} 
\def\Ka{\mathrm{Ka}}
\def\sym{\mathrm{Sym}}
\def\undone{\underline{1}}
\def\undalpha{\underline{\alpha}}
\def\wtS{\widetilde{S}}
\def\mcD{\mathcal{D}}

\def\lra{\longrightarrow}
\def\kk{\mathbf{k}}  
\def\kR{\widetilde{R}} 
\def\lF{\langle F\rangle}  
\def\lG{\langle G\rangle} 
\def\gdim{\mathrm{gdim}}  
\def\rk{\mathrm{rk}}
\def\wchi{\widetilde{\chi}}
\def\Free{\mathrm{Fr}}
\def\ualpha{\underline{\alpha}}
\def\ovh{\overline{h}}
\def\ove{\overline{e}}
\def\iind{\mathrm{ind}} 
\def\symf{\mathrm{Sym}}

\let\oldemptyset\emptyset
\let\emptyset\varnothing

\newcommand{\brak}[1]{\ensuremath{\left\langle #1\right\rangle}}
\newcommand{\oplusop}[1]{{\mathop{\oplus}\limits_{#1}}}
\newcommand{\ang}[1]{\langle #1 \rangle_{RW} } 
\newcommand{\angf}[1]{\langle #1 \rangle } . 
\newcommand{\pseries}[1]{\kk\llbracket #1 \rrbracket}
\newcommand{\rseries}[1]{R \llbracket #1 \rrbracket}
\newcommand{\addfigure}{\vspace{0.1in} \begin{center} {\color{red} ADD FIGURE} \end{center} \vspace{0.1in} }
\newcommand{\add}[1]{\vspace{0.1in} \begin{center} {\color{red} ADD FIGURE #1} \end{center} \vspace{0.1in} }

\maketitle
\tableofcontents

%
%

\section{Universal construction in \texorpdfstring{$n$}{n} dimensions}
\label{sec_uni_n}

Consider the tensor category $\Cobn$ of oriented $n$-dimensional cobordisms. Its objects are oriented closed $(n-1)$-manifolds $N$. Morphisms from $N_0$ to $N_1$ are equivalence classes of oriented compact $n$-manifolds $M$ with $\partial M = (-N_0)\sqcup N_1$, and the equivalence relation is diffeomorphism rel boundary. This is a symmetric tensor category, and by a tensor category in this note we will mean a symmetric tensor category.  

Monoidal functors from $\Cobn$ into algebraic tensor categories $\mathcal{C}$, such as the category of vector spaces over a field $\kk$ or the category of projective modules over a commutative ring $R$, are known as  $n$-dimensional TQFTs (topological quantum field theories) and play an important role in mathematical physics and related fields~\cite{At}. 

There are many examples of interesting TQFT-type functors $\alpha$ that 
do not satisfy the tensor product condition. Instead of a family of  isomorphism $\alpha(N_0)\otimes \alpha(N_1)\cong \alpha(N_0\sqcup N_1)$ giving rise to isomorphisms of bifunctors $\Cobn \times \Cobn \lra \mathcal{C}$, there may exist a compatible family of homomorphisms
\begin{equation}
    \alpha(N_0)\otimes \alpha(N_1) \lra \alpha(N_0\sqcup N_1) 
\end{equation}
that form a natural transformation of bifunctors between these categories. In many examples these homomorphisms are injective for all $N_0, N_1$, so that the natural transformation is an inclusion. 
  
 We will call these functors, that are usually not monoidal, 
 \emph{topological theories}. There may already exist an established terminology in the literature, but we are not aware of it. 
 
Topological theories naturally emerge from the \emph{universal construction} as described by Blanchet, Habegger, Masbaum and Vogel~\cite{BHMV} and used in their approach to the  Witten-Reshetikhin-Turaev  3-manifold invariants. A very similar universal pairing construction was studied by Freedman, Kitaev, Nayak, Slingerland, Walker and Wang~\cite{FKNSWW} in the context of positive-definite forms, see also~\cite{CFW,KT,Fr} and the review~\cite{W}. 

A variation of the universal construction, for foams embedded in $\R^3$, was used in~\cite{Kh1} to categorify the Kuperberg invariant of closed $A_2$-webs, as a step in a  categorification of the quantum $sl(3)$ link invariant, a.k.a. the Kuperberg bracket. Mackaay and Vaz~\cite{MV} generalized this setup to the equivariant $sl(3)$ case.  Robert-Wagner evaluation formula for closed $GL(N)$ foams extends, via the universal construction for foams in $\R^3$, to homology groups (or state spaces) for planar MOY graphs~\cite{RW1}. 

The following  is the original setup for the  universal  construction~\cite{BHMV}. An invariant $\alpha$ of closed oriented $n$-manifolds is given by assigning to each such manifold $M$ an element $\alpha(M) \in R$, where $R$ is a fixed commutative  ring,  such that $\alpha(M)=\alpha(M')$ if the manifolds $M,M'$ are diffeomorphic. In this paper we impose  multiplicativity assumptions on the invariant: 
\begin{enumerate}
    \item $\alpha(\emptyset_n)=1$, where $\emptyset_n$ is the empty $n$-manifold; 
    \item $\alpha(M_1\sqcup M_2)=\alpha(M_1)\alpha(M_2),$ for $n$-manifolds $M_1,M_2$. 
\end{enumerate}

In case of general $n$, it is convenient  to introduce an  involution $\kappa$ on $R$ (analogous to complex conjugation on $\C$) to match the  operation of orientation reversal on a manifold,  $M \longmapsto -M$. We assume that $\kappa$ is an involution of the ring $R$ and require
\[ \alpha(-M) = \kappa(\alpha(M)),
\] 
for all $M$.

Now to each closed oriented $(n-1)$-manifold $N$ associate an $R$-module. First, define the free $R$-module $\Free(N)$ as a module with the basis $[M]$, for all oriented $n$-manifolds $M$ such that $\partial M = N$ (another natural notation for $[M]$ is $\alpha(M)$). We think of $[M]$ as a formal symbol associated to $M$. There is an  $R$-valued  $\kappa$-semilinear form on $\Free(N)$ given by
\begin{eqnarray*}
    & &  ([M_1],[M_2]) \ = \ \alpha ( -M_1 \cup_{N} M_2 ), \ \ \partial(M_1)=\partial(M_2)=N,   \\
    & & (\sum_i a_i [M_i], \sum_j b_j [M'_j]) \ = \ \sum_{i,j} 
    \kappa(a_i)b_j ([M_i],[M'_j]).
\end{eqnarray*}
The form is $\kappa$-symmetric 
\begin{equation}
    ([M_2],[M_1]) \ = \ \kappa(([M_1],[M_2]) ).  
\end{equation}
Define the  state space $\alpha(N)$ as the quotient of $\Free(N)$ by the  kernel of this semilinear form, 
\begin{equation}
    \alpha(N) = \Free(N)/\mathrm{ker}((,)) . 
\end{equation}
Since $(,)$ is $\kappa$-symmetric, it does not matter whether we consider left or right kernel. It can also be denoted $(,)_N$ to emphasize the dependence on $N$. This bilinear form induces a non-degenerate bilinear form, also denoted $(,)_N$, on $\alpha(N)$. The form is non-denenerate on $\alpha(N)$ but is not always unimodular. 

Clearly, $\alpha(\emptyset_{n-1}) \cong R[\emptyset_n], $ where $\emptyset_m$ denotes the empty $m$-manifold. Namely, the $R$-module $\alpha(\emptyset_{n-1})$ of the empty $(n-1)$-manifold $\emptyset_{n-1}$ is free of rank one with a generator $[\emptyset_n]$. For a closed $n$-manifold $M$ we have $[M]=\alpha(M)[\emptyset_n]$ in $\alpha(\emptyset_{n-1})$. 

\begin{prop}  For $(n-1)$-manifolds $N_0,N_1$ there is a canonical map of $R$-modules 
    \begin{equation} \label{eq:can_map_tensor}
        \alpha_{N_0,N_1} \ : \  \alpha(N_0)\otimes_R \alpha(N_1) \lra \alpha(N_0\sqcup N_1). 
    \end{equation}
\end{prop}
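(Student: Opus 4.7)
The plan is to first define the map on the free modules $\Free(N_i)$ and then verify that it descends to the quotients $\alpha(N_i) = \Free(N_i)/\ker((,)_{N_i})$. On free modules, define
\[
\widetilde{\alpha}_{N_0,N_1} \colon \Free(N_0) \otimes_R \Free(N_1) \lra \Free(N_0 \sqcup N_1), \qquad [M_0] \otimes [M_1] \longmapsto [M_0 \sqcup M_1],
\]
and extend $R$-bilinearly. This is well-defined on basis elements since $M_0 \sqcup M_1$ is again a compact oriented $n$-manifold, with boundary $N_0 \sqcup N_1$.

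Write $K_i := \ker((,)_{N_i})$. To produce the desired $\alpha_{N_0,N_1}$, it suffices to show that $\widetilde{\alpha}_{N_0,N_1}$ sends $x \otimes [M_1]$ with $x \in K_0$ and $\partial M_1 = N_1$ (and symmetrically $[M_0] \otimes y$ with $y \in K_1$) into $\ker((,)_{N_0 \sqcup N_1})$. Writing $x = \sum_i a_i [M_0^i]$ and testing against a basis element $[M']$ with $\partial M' = N_0 \sqcup N_1$, I compute
\[
\bigl( \widetilde{\alpha}_{N_0,N_1}(x \otimes [M_1]), \, [M'] \bigr)_{N_0 \sqcup N_1} \ = \ \sum_i \kappa(a_i) \, \alpha\bigl( ((-M_0^i) \sqcup (-M_1)) \cup_{N_0 \sqcup N_1} M' \bigr).
\]

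The key geometric step is the observation that although $M'$ itself need not split as a disjoint union, the gluing to $(-M_0^i) \sqcup (-M_1)$ along the disjoint boundary pieces $N_0$ and $N_1$ can be carried out in two stages. Setting $M'' := (-M_1) \cup_{N_1} M'$, a compact oriented $n$-manifold with $\partial M'' = N_0$, the total glued manifold above is diffeomorphic rel boundary to $(-M_0^i) \cup_{N_0} M''$. Hence each summand equals $([M_0^i], [M''])_{N_0}$, and the whole sum collapses to $(x, [M''])_{N_0} = 0$ by the hypothesis $x \in K_0$. A symmetric computation handles the $K_1$ side, so $\widetilde{\alpha}_{N_0,N_1}$ descends to the canonical $R$-module map $\alpha_{N_0,N_1}$. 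The main subtlety is this two-stage gluing: no multiplicativity of $\alpha$ is needed at this step, only the purely topological fact that gluings along disjoint boundary components can be performed independently.
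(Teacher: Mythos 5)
Your proposal is correct and takes essentially the same approach as the paper, which simply declares the map ``obviously well-defined'' without spelling out why; the two-stage gluing observation, namely that $((-M_0^i) \sqcup (-M_1)) \cup_{N_0 \sqcup N_1} M'$ is diffeomorphic to $(-M_0^i) \cup_{N_0} \bigl((-M_1) \cup_{N_1} M'\bigr)$, is precisely the content the paper leaves implicit. Your remark that no multiplicativity of $\alpha$ is used here is also accurate.
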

\emph{Proof:}
Given $M_i$ with $\partial M_i\cong N_i$, $i=0,1$, the map sends $[M_0]\otimes [M_1]$ to $[M_0\sqcup M_1]$. It's obviously well-defined. $\square$

\vspace{0.07in} 

In many cases, but not always, maps $\alpha_{N_0,N_1}$ are injective. 

\begin{prop} \label{prop_injective} Maps $\alpha_{N_0,N_1}$ are injective when $R=\kk$ is a field. 
\end{prop}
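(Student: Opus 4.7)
The plan is to exploit the bilinear forms that cut out the state spaces. The map $\alpha_{N_0,N_1}$ is tautologically defined on generators, and the key observation is that it intertwines the tensor product bilinear form on $\alpha(N_0)\otimes_\kk \alpha(N_1)$ with the form $(,)_{N_0\sqcup N_1}$ on $\alpha(N_0\sqcup N_1)$. Since each factor form is non-degenerate by the construction of $\alpha(N_i)$, the first goal is to upgrade this to non-degeneracy of the tensor product form, after which an element in the kernel of $\alpha_{N_0,N_1}$ will be forced to lie in the radical of the tensor form, which is zero.

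First I would verify the compatibility of forms. On pure generators $[M_0\sqcup M_1]$ and $[M_0'\sqcup M_1']$, the disjoint union identity
\[ -(M_0\sqcup M_1)\cup_{N_0\sqcup N_1}(M_0'\sqcup M_1') \ = \ (-M_0\cup_{N_0} M_0')\ \sqcup\ (-M_1\cup_{N_1} M_1')  \]
together with the multiplicativity axiom $\alpha(X\sqcup Y)=\alpha(X)\alpha(Y)$ immediately gives
\[ ([M_0\sqcup M_1],[M_0'\sqcup M_1'])_{N_0\sqcup N_1} \ = \ ([M_0],[M_0'])_{N_0}\cdot ([M_1],[M_1'])_{N_1}. \]
Extending $\kappa$-sesquilinearly, this says precisely that $\alpha_{N_0,N_1}$ pulls back the form on $\alpha(N_0\sqcup N_1)$ to the tensor product form on $\alpha(N_0)\otimes \alpha(N_1)$.

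Next I would prove that the tensor product form is non-degenerate, even without any finite-dimensionality hypothesis. Given a nonzero $u\in \alpha(N_0)\otimes \alpha(N_1)$, write $u=\sum_{i=1}^n e_i\otimes v_i$ with $e_1,\dots,e_n\in\alpha(N_0)$ linearly independent and not all $v_i$ zero, say $v_1\neq 0$. Non-degeneracy of $(,)_{N_1}$ supplies some $w_1$ with $c_1:=(v_1,w_1)_{N_1}\neq 0$, and then the element $\sum_i c_i e_i$ of $\alpha(N_0)$ is nonzero because the $e_i$ are linearly independent and $c_1\neq 0$; non-degeneracy of $(,)_{N_0}$ then produces $w_0$ pairing nontrivially with $\sum c_i e_i$, yielding $(u, w_0\otimes w_1)_\otimes \neq 0$.

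Combining the two steps: if $\alpha_{N_0,N_1}(x)=0$, then for every $y\in \alpha(N_0)\otimes \alpha(N_1)$ we have $(x,y)_\otimes = (\alpha_{N_0,N_1}(x),\alpha_{N_0,N_1}(y))_{N_0\sqcup N_1}=0$, forcing $x=0$ by the non-degeneracy just established. The only delicate step is the infinite-dimensional non-degeneracy of the tensor form, but handling it as above via a choice of linearly independent $e_i$ sidesteps any issue; the involution $\kappa$ plays no essential role beyond being carried along formally.
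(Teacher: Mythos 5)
Your proof is correct and follows essentially the same route as the paper: pair the putative kernel element against split cobordisms $[a']\otimes[b']$, observe that this computes the tensor-product form, and conclude by non-degeneracy. The only difference is that you spell out the non-degeneracy of the tensor product of two non-degenerate forms over a field (the step the paper states as "equivalent to $\sum_i\lambda_i[a_i]\otimes[b_i]=0$" without proof), which is indeed exactly the point that breaks over general commutative rings, as the paper notes afterward.
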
 

\emph{Proof:} Assume that 
\[\alpha_{N_0,N_1}(\sum_i  \lambda_i [a_i]\otimes [b_i]) = \sum_i \lambda_i [a_i\sqcup b_i] = 0
\]
in  $\alpha(N_0\sqcup N_1)$,  where $a_i$, resp. $b_i$, are cobordisms from the empty $n$-manifold to $N_0$, resp. $N_1$ and $\lambda_i \in \kk$. Then for any cobordism $c$ from $N_0\sqcup N_1$ to $\emptyset_n$, 
\[ \sum_i \lambda_i \alpha(c (a_i \sqcup b_i)) = 0. 
\] 
Specializing to $c$ which are disjoint unions of cobordisms $a_i'$ and $b_i'$ from $N_0$, resp. $N_1$, to the empty $n$-manifold, we get 
\[ \sum_i \lambda_i \alpha(a_i'a_i)\alpha(b_i' b_i) = 0  \in R.  
\] 
But this is equivalent to 
\[
\sum_i  \lambda_i [a_i] \otimes [b_i] = 0  
\]
in $\alpha(N_0)\otimes_R \alpha(N_1).$

$\square$

The last step in the proof is problematic over more general commutative rings, with counterexamples discussed in Section~\ref{subsub_nonin}. We do expect that for a large class of interesting theories in various dimensions $n$ over rather general commutative  rings maps $\alpha_{N_0,N_1}$ will be injective for all $N_0,N_1$. 

\vspace{0.07in}

To an $n$-cobordism $M$ between $N_0$ and $N_1$ such that $\partial(M) = (-N_0)\sqcup N_1$, associate a map $\alpha(M):\alpha(N_0)\lra \alpha(N_1)$. 
This map takes 
a generator $[M_0]$ of $\alpha(N_0)$ corresponding to an $n$-manifold $M_0$ with $\partial M_0 = N_0$ to $[M M_0]\in \alpha(N_1)$. It's easy to see that $\alpha(M)$ is a well-defined map of $R$-modules. The following statement is obvious. 

\begin{prop} Assigning $R$-modules $\alpha(N)$ to closed oriented $(n-1)$-manifolds $N$ and $R$-module maps $\alpha(M)$ to oriented $n$-cobordisms $M$ is a functor from the category $\Cobn$ of oriented $n$-cobordisms to the category of $R$-modules. 
\end{prop}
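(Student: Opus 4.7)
The plan is to verify the two categorical axioms --- preservation of identities and preservation of composition --- after first revisiting why $\alpha(M)$ descends from $\Free(N_0)$ to the quotient $\alpha(N_0)$, since that step is glossed as ``easy'' in the excerpt but is the only nontrivial point.

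For well-definedness, the assignment $[M_0]\mapsto [M\cup_{N_0}M_0]$ on $\Free(N_0)$ is $R$-linear by construction, and it depends only on the diffeomorphism class of $M$ rel boundary, since such a diffeomorphism extends by the identity on $M_0$. To see that it descends to $\alpha(N_0)$, I would establish the adjunction identity
\[
([M\cup_{N_0}M_0],[M_1'])_{N_1} \;=\; ([M_0],[(-M)\cup_{N_1}M_1'])_{N_0},
\]
where $-M$, with boundary $N_0\sqcup(-N_1)$, is regarded as a cobordism from $N_1$ to $N_0$, so that $(-M)\cup_{N_1}M_1'$ is a legitimate generator of $\Free(N_0)$. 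Both sides unpack, via associativity of gluing, to $\alpha$ of the same closed $n$-manifold $(-M_0)\cup_{N_0}(-M)\cup_{N_1}M_1'$. Hence if $[M_0]$ lies in $\ker(\cdot,\cdot)_{N_0}$, its image $[M\cup_{N_0}M_0]$ pairs to zero against every generator of $\Free(N_1)$ and thus lies in $\ker(\cdot,\cdot)_{N_1}$.

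For the identity axiom, the glued manifold $(N\times I)\cup_N M_0$ is diffeomorphic to $M_0$ rel boundary (absorb the collar), so $\alpha(N\times I)$ already acts as the identity on every generator of $\Free(N)$, and therefore descends to $\Id_{\alpha(N)}$. For composition, given $M:N_0\to N_1$ and $M':N_1\to N_2$, associativity of gluing provides a diffeomorphism
\[
(M'\cup_{N_1}M)\cup_{N_0}M_0 \;\cong\; M'\cup_{N_1}(M\cup_{N_0}M_0)
\]
rel boundary, which immediately yields $\alpha(M'\cup_{N_1}M)[M_0]=\alpha(M')\alpha(M)[M_0]$ on generators and so on all of $\alpha(N_0)$ by $R$-linearity.

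The main potential obstacle is nothing more than orientation bookkeeping in the adjunction identity: one must verify that reversing the orientation of the glued cobordism $M\cup_{N_0}M_0$ produces $(-M_0)\cup_{N_0}(-M)$, so that the two pairings really do compute $\alpha$ of the same closed manifold. Once that is in hand, the identity and composition axioms are formal consequences of the cylinder collar and associativity of gluing, as sketched above.
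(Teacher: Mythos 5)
Your proof is correct, and the key point — the adjunction identity $([M\cup_{N_0}M_0],[M_1'])_{N_1}=([M_0],[(-M)\cup_{N_1}M_1'])_{N_0}$, which shows that $\alpha(M)$ carries $\ker(\,,\,)_{N_0}$ into $\ker(\,,\,)_{N_1}$ and so descends to the quotient — is exactly the detail the paper waves away by declaring the map "easy to see" well-defined and the proposition "obvious." Since the paper offers no proof at all, there is nothing to contrast with; your argument is the standard (and essentially unique) way to verify the claim, and the identity and composition axioms follow formally from collar absorption and associativity of gluing as you say.
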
 

This functor is also denoted $\alpha: \Cobn\lra R\mathrm{-mod}$. It's not monoidal, in general, but satisfies a weaker monoidality property, where injective maps (\ref{eq:can_map_tensor}) are compatible with morphisms $\alpha(M)$ associated to $n$-manifolds $M$, so that the diagram below commutes. 
\[
\begin{tikzcd}[nodes={inner sep=10pt}]
\alpha(N'_0)\otimes_R \alpha(N'_1)   
\arrow{r}{\alpha_{N'_0,N'_1}}
& \alpha(N'_0\sqcup N'_1) \\
\alpha(N_0)\otimes_R \alpha(N_1) 
\arrow{u}{\alpha(M_0)\otimes \alpha(M_1)}
\arrow{r}{\alpha_{N_0,N_1}} & 
\alpha(N_0\sqcup N_1) 
\arrow[swap]{u}{\alpha(M_0\sqcup M_1)}
\end{tikzcd} 
\]
where $M_i$ is a cobordism from $N_i$ to $N_i'$, for $i=0,1$. 

\vspace{0.1in} 

For each  $N$ there are tube cobordisms $1_{tu(N)}$ and $\epsilon_{tu(N)}$ given by  taking the identity cobordism $\mathrm{id}_N$ and bending it so that both boundary copies of $N$ are on one side of the cobordism. Tube cobordism $1_{tu(N)}$ goes from the empty $(n-1)$-manifold $\emptyset_{n-1}$ to $N\sqcup (-N)$ and the cotube cobordism 
$\epsilon_{tu(N)}$ goes in the opposite direction. For $n=2$ and $N$ a circle these cobordism are depicted in Figure~\ref{fig_ell_cob2} below. 

The cotube cobordism $\epsilon_{tu(N)}$ can  be used to redefine the $R$-semilinear form $(,)_N$ on $\alpha(N)$. To describe $(,)_N$ via the cotube, take $n$-manifolds $M_1,M_2$ with boundary $N$, compose $M_1\sqcup (-M_2)$ with $\epsilon_{tu(N)}$ and evaluate the resulting closed $n$-manifold. 

The semilinear form $(,)_N$  defines an injective $R$-module homomorphism 
\begin{equation} \label{dual_map}
\phi_N: \alpha(-N) \lra \alpha(N)^{\ast}= \Hom_R(\alpha(N),R)
\end{equation}
which may not be surjective. 
Homomorphism  (\ref{eq:can_map_tensor}), while always injective, may not be an isomorphism either. It's an isomorphism for all $N_0,N_1$ iff each tube $1_{tu(N)}$ is in the image of $\alpha_{N,-N}$, that is, can be written as a finite $R$-linear combination 
\begin{equation}\label{eq_1_dec}
 [1_{tu(N)}] = \sum_{i=1}^m \lambda_i [M_i]\otimes [-M_i'] 
\end{equation}
for some $m$, $\lambda_i\in R$ and manifolds $M_i,M_i'$ with boundary  $N$. This condition need to be checked for connected $N$ only. 

Blanchet, Habegger, Masbaum and Vogel~\cite{BHMV} define the following three properties of $\alpha$, extended below by two more properties: 
\begin{itemize}
    \item (I) The map  $\phi_N$ in (\ref{dual_map})  is an isomorphism for all $N$. 
    \item (M) The map $\alpha_{N_0,N_1}$ in (\ref{eq:can_map_tensor}) is  an isomorphism for all  $N_0,N_1$. 
    \item (F) $R$-module $\alpha(N)$ is free of  finite rank and the form $(,)_N$ is unimodular, for all $N$. 
    \item (M') $[1_{tu(N)}]$ decomposes as a linear combination (\ref{eq_1_dec}) for any $N$. 
    \item (M'') The map $\alpha_{N_0,N_1}$ in (\ref{eq:can_map_tensor}) is an isomorphism onto an $R$-module  direct summand for all $N_0,N_1$. 
\end{itemize}
Paper~\cite{BHMV} defines (I), (M), (F) and mentions property (M'') without labeling it. 
Blanchet et al.~\cite{BHMV} point out that (F) implies (I) and (M''). We observe that, when $R$  is  a field, property (M') is equivalent to (M). Indeed, such a decomposition, done near one of the necks $N_i\times [0,1]$ of a cobordism representing a vector in $\alpha(N_0\sqcup N_1)$ shows that it comes from 
an element of $\alpha(N_0)\otimes \alpha(N_1)$. Consequently, (M') implies (M), over a  field. Vice versa, (M) applied to $N_0=N, N_1=-N$ implies that the tube cobordism $1_{tu(N)}$ decomposes as in (\ref{eq_1_dec}), giving (M'). 

Condition (M) says that $\alpha$ is an $n$-dimensional TQFT as defined in~\cite{At}, for instance. We will  see in this paper that already for $n=2$ there are interesting theories that  don't satisfy property (M).

\vspace{0.07in} 

$R$-module $\alpha(N)$ is a quotient of a free countably-generated $R$-module $\Free(N)$. It's also a submodule of the $R$-module $\alpha(-N)^{\ast}$. Consequently, if $R$ is an integral domain, $\alpha(N)$ has no torsion. 

\vspace{0.07in} 

Blanchet et al.~\cite{BHMV} use multiplicativity assumptions (1),(2) on the invariant, listed at the beginning of this section, which say that the invariant is multiplicative under the disjoint union of $n$-manifolds. We then say that the theory $\alpha$ is $n$-multiplicative. If, in addition, maps (\ref{eq:can_map_tensor}) are isomorphisms, that  is, condition (M) above holds, we say that $\alpha$ is $(n-1)$-multiplicative, which  is  a stronger property. Equivalently, $\alpha$ is an $n$-dimensional TQFT as defined by Atiyah~\cite{A}.  

\vspace{0.07in} 

Freedman, Kitaev, Naya, Slingerland, Walker and Wang~\cite{FKNSWW} (see also follow-up papers, including~\cite{CFW,KT,Fr,W}) study a similar semilinear pairing. They form commutative  $\C$-algebra with a  basis of diffeomorphism  classes of  all closed oriented $n$-manifolds and multiplication given by the   disjoint  union. Then, for a closed oriented  $(n-1)$-manifold $N$, they consider a $\C$-semilinear pairing on the $\C$-vector  space of all $n$-manifolds with boundary $N$, using orientation reversal in conjunction with complex involution for semilinearity. This $\C$-vector space can be alternatively viewed as a  free module over the above algebra with a basis given by manifolds with boundary $N$ and without closed components. Authors of the above papers prove the absence of null-vectors in this  space (vectors $v$  with $(v,v)=0$) in dimensions two and three, and show existence of null vectors in dimensions four and  higher. Similar to~\cite{BHMV}, theories given by  Freedman et al.'s construction   are not  $(n-1)$-multiplicative, that is, do not satisfy the Atiyah tensor product axiom or, equivalently, fail property (M) and related property (M'). 

\vspace{0.07in} 

This paper deals with the $n=2$ case only. We discuss  the basics of universal theories in dimension two and show that, over a field, finite rank theories correspond to rational functions that encode values of the invariant on connected closed surfaces over all genera. In a less topological language this can be traced back to the work of Kronecker~\cite{Fh} and has reappeared in Dwork~\cite{Dw}, also see a detailed exposition in Koblitz~\cite[Chapter V.5]{Kb}. Classification of finite rank theories can be restated via the finite or Sweedler dual of the polynomial  algebra, see  Section~\ref{sec_recur}. We single out families of these theories with coefficients in the tensor products of two rings of symmetric functions and discuss simplest examples that are not 1-multiplicative, that is, fail the Atiyah tensor product axiom (\ref{eq:can_map_tensor}). Hankel matrices, Schur functions and supersymmetric Schur functions naturally appear in this story. 

Sergeev-Pragacz determinant formula for the  supersymmetric Schur function generalizes the Jacobi-Trudi determinant expression for the Schur function. In Section~\ref{sec_SP_formula} we show how to interpret the Sergeev-Pragacz determinant via an extension of the Robert-Wagner foam evaluation formula~\cite{RW1} to overlapping foams carrying different sets of variables. The same extension recovers, as special cases,  the classical notion of the resultant  of  two  polynomials and  the formula of  Michael Day~\cite{Da} for the Toeplitz determinant of rational power series. We do not give new proofs of the Sergeev-Pragacz or Day formulas and only interpret the expressions in these formulas through evaluation of overlapping foams. 

\vspace{0.1in} 

{\bf Acknowledgments:} The author is  grateful to Yakov Kononov, Lev Rozansky and Anton Zeitlin for interesting discussions. The author would like to thank Victor Shuvalov for help with creating the figures and Yakov Kononov for valuable input on an earlier version of the paper. The author was partially supported by NSF grants DMS-1664240 and DMS-1807425 while working on this paper. 

%
%

\section{Universal construction in two dimensions}\label{sec_univ_con_two} 

%
%

\subsection{State space of a circle and the generating function} \label{subset_state_circle}
\quad 
\vspace{0.07in}

In this note we  restrict to the case $n=2$.
Closed oriented surfaces $M$ have the property $(-M)\cong M$, that is, $M$  is diffeomorphic to itself with the orientation reversed. Thus, in dimension two, the invariant $\alpha(M)$  takes values in the subring of $\psi$-invariants of $R$, and we assume without loss of generality that the involution $\psi$ is the identity. 

\vspace{0.03in} 

\emph{Remark:} In the situation when $M$ is embedded into a larger space (or is built by gluing oriented patches and has singularities or seams) it may make sense to keep $\psi$ nontrivial even for $n\le 2$. 

\vspace{0.03in} 

We specialize to $n=2$ and restrict to the case $\psi=\mathrm{id}$. The bilinear form is symmetric, 
\begin{equation}
    ([M_2],[M_1]) \ = \ ([M_1],[M_2]).  
\end{equation}

We start with a multiplicative invariant $\alpha$ of closed oriented 2-manifolds. Due to multiplicativity,  the invariant is determined by its values on connected 2-manifolds. Diffeomorphism classes of such manifolds are in a bijection with non-negative integers $g\ge 0$, where to  $g$ one associates a closed oriented  surface $S_g$ of that genus. Let $\alpha_g=\alpha(S_g)\in R$. Invariant $\alpha$ is determined by the infinite sequence 
$\ualpha=(\alpha_0,\alpha_1, \dots )$ of elements of $R$. We also use $\alpha$ in place of $\ualpha$ to denote this sequence, when it does not lead to confusion. Form the generating function 
\begin{equation}\label{eq_gen_fun}
   Z(T)= Z_{\undalpha}(T) = \sum_{g\ge 0}\alpha_g T^g \ \in \ \rseries{T},
\end{equation}
where $\rseries{T}$ is the ring of power series in $T$ with coefficients in $R$. 

We'll mostly consider the case when $R$ is a field, $R=\kk$, or $R$ is an integral domain with the field of fractions $Q=Q(R)$. 

To the generating function $Z(T)$ encoding information about invariants at all genera via the  universal construction we associate a collection of state spaces 
\[ A(k) := \alpha(\sqcup_{k} \SS^1), \ k\ge 0,
\] 
for the disjoint union $\sqcup_{k} \SS^1 $ of $k$ copies of $\SS^1$. 
The state space $A(k)$ is spanned by oriented 2-manifolds $M$ with $\partial(M) = \sqcup_k \SS^1$. $A(k)$ is an $R$-module quotient of the free $R$-module generated by these 2-manifolds modulo the kernel of the bilinear form given by gluing cobordisms along the boundary and evaluating via coefficients of  $Z(T)$. 

In particular, $A(0)=R[\emptyset_2]$ is the free $R$-module of rank one generated by the symbol of the empty 2-cobordism into the empty 1-manifold. Symmetric group $S_k$ acts on $A(k)$, the action induced by the permutation cobordisms, and there are multiplication homomorphisms 
\begin{equation}\label{eq_hom_parallel}
     \alpha_{k_1,k_2}: A(k_1) \otimes_R A(k_2) \lra A(k_1+k_2) 
\end{equation}
given by putting cobordisms with $k_1$ and $k_2$ boundary circles next to each other. 

We may exclude the case $Z(T)=0$, for then all state spaces are zero.  

We denote by 
\begin{equation}\label{eq_A_def} 
    A = A(1) = \alpha(\SS^1)
\end{equation}
the state space of the circle. 
The pants and cup cobordisms, see Figure~\ref{fig_ell_cob1}, turn this state space $A$ into an associative commutative unital algebra over $R$. Multiplication is provided by the pants cobordism, and the cup gives the unit element of $A$.

\begin{figure}[h]
\begin{center}
\includegraphics[scale=0.8]{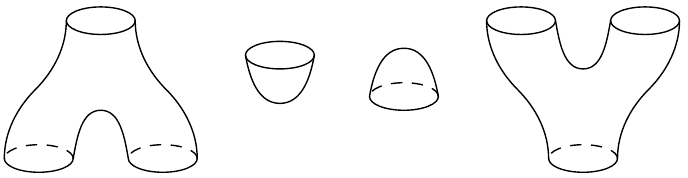}
\caption{Left to right: pants, cup, cap, and  copants cobordisms}
\label{fig_ell_cob1}
\end{center}
\end{figure}

More precisely, the pants cobordism induces a map $A(2)\stackrel{m_2^1}{\lra} A(1)$ and defines a multiplication on $A=A(1)$ via the composition 
\begin{equation}\label{eq_maps_A12} 
    A(1) \otimes A(1) \stackrel{\alpha_{1,1}}{\lra} A(2) \stackrel{m_2^1}{\lra} A(1),
\end{equation}
where we denote $\alpha_{1,1}=\alpha_{\SS^1, \SS^1}$, see (\ref{eq:can_map_tensor}) and (\ref{eq_hom_parallel}), and the map $m_n^1:A(n)\lra A(1)$ is induced by the multipants cobordism merging $n$ circles into one. 
  
The algebra $A=\alpha(\SS^1)$ is spanned by elements $x^n=[\SS^1_n]$, $n\ge 0$ representing the surface $\SS^1_n$ of genus $n$ with one boundary component.  The unit element $1=[\SS^1_0]$ is represented by a disk, and a generator $x=[\SS^1_1]$ is a torus with one boundary component, see Figure~\ref{fig_ell_cob3}.


\begin{figure}[h]
\begin{center}
\includegraphics[scale=1.0]{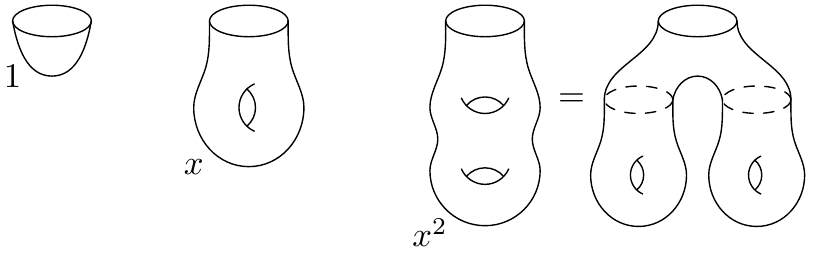}
\caption{Disk, torus and genus two surface with one boundary component, giving elements $1,x,x^2$ of $A=A(1)$.}
\label{fig_ell_cob3}
\end{center}
\end{figure}

A genus $n$ surface with one boundary component is $[\SS^1_n]=[\SS^1_1]^n=x^n.$ The natural homomorphism 
\begin{equation} \label{eq_nat_hom}
\rho_{\alpha}: R[x]\lra A=\alpha(\SS^1)
\end{equation} 
is either an isomorphism or identifies $A$ with the quotient of $R[x]$ by a nontrivial ideal. 
\vspace{0.05in} 

Each $A(k)$ is an associative commutative 
unital $R$-algebra. A spanning set of element of $A(k)$ is given by all cobordisms $M$ with  the  boundary $\partial(M)=\sqcup_k\SS^1$ being the union of $k$ circles (it's enough to choose one representative from each homeomorphism class of cobordisms rel boundary). Taking disjoint union of two such cobordisms $M_1,M_2$ and composing with $k$ pants cobordisms merging $2k$ boundary circles in pairs induces the algebra structure on $A(k)$.
Representatives of homeomorphism classes may be selected over all  possible decompositions of $k$ boundary circles into non-empty disjoint subsets, specifying connected components with those subsets as the boundary, and choosing the genus ($0,1,\dots$)  of each component, see also Proposition~\ref{prop_rel_Ak} below that describes a basis in algebras $A_k$ that surject onto $A(k)$ for any  $\undalpha$.  

\vspace{0.05in} 

The tube cobordism $1_{tu}$ from $\emptyset_1$ to $\SS^1\sqcup \SS^1$, see Figure~\ref{fig_ell_cob2}, gives an element in $A(2)$ that may not belong to the image of $A(1)^{\otimes 2}$ under the homomorphism $\alpha_{1,1}$ induced by  
 the 'disjoint union' map 
\begin{equation}\label{eq_alpha_one_one}
    \alpha_{1,1}:A(1)^{\otimes 2}= \alpha(\SS^1)\otimes \alpha(\SS^1)\lra \alpha(\SS^1\sqcup \SS^1) = A(2).
\end{equation}
This homomorphism potentially misses linear combinations of cobordisms given by the tube $1_{tu}$, possibly with handles added, see Figure~\ref{fig_ell_cob4} right. 
 
 The tube element $1_{tu}\in \alpha(\SS^1\sqcup  \SS^1)=A(2)$ satisfies a suitable duality property shown in Figure~\ref{fig_ell_cob2} right.  
 
 \vspace{0.1in} 
\begin{figure}[h]
\begin{center}
\includegraphics[scale=1.0]{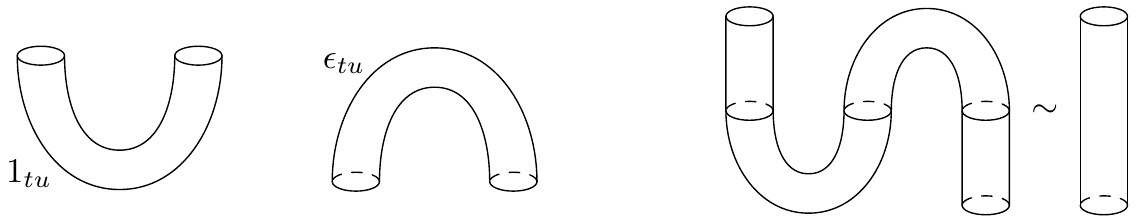}
\caption{On the left: tube cobordism $1_{tu}$ and  the cotube cobordism $\epsilon_{tu}$. On the right: an arrangement of these cobordisms homeomorphic rel boundary to the identity cobordism: $(\mathrm{id}_{\SS^1}\otimes \epsilon_{tu})\circ (1_{tu}\otimes \mathrm{id}_{\SS^1})=\mathrm{id}_{\SS^1}.$}
\label{fig_ell_cob2}
\end{center}
\end{figure}


\begin{figure}[h]
\begin{center}
\includegraphics[scale=1.0]{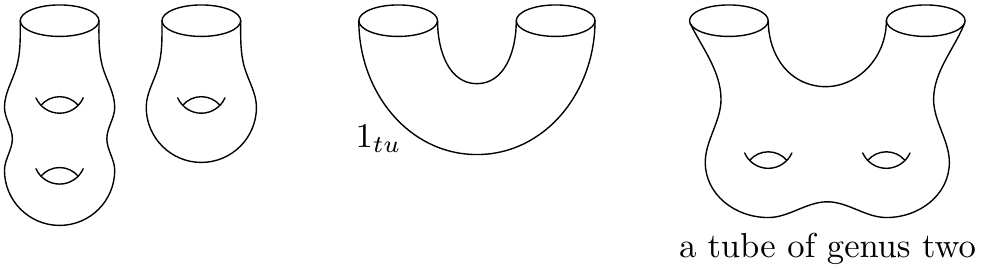}
\caption{Left to right: A decomposable cobordism which represents an element in the image of $A(1)^{\otimes 2}$ in $A(2)$ under $m^1_2$; the tube cobordism $1_{tu}$, a tube with two handles cobordism.}
\label{fig_ell_cob4}
\end{center}
\end{figure}

The cap cobordism in  Figure~\ref{fig_ell_cob1}  produces the trace map 
\begin{equation}\label{eq_trace_map} 
\epsilon: A=\alpha(\SS^1)\lra R.
\end{equation}
Likewise, the union of $k$ caps  gives the trace map $A(k)\lra R.$

The trace map, however, may not turn $A(1)=\alpha(\SS^1)$ into a Frobenius algebra. This is due to the absence of a neck-cutting relation for general $\alpha$, that is, $\alpha_{1,1}$ not being surjective, so that the tube $1_{tu}$ in Figure~\ref{fig_ell_cob4} center may not be an image of an element of $A(1)^{\otimes 2}$ under $\alpha_{1,1}$. 

As an example, 
Figure~\ref{fig_ell_cob4} depicts on the left an element of $\alpha_{1,1}(A(1)^{\otimes 2})\subset A(2)$ given  by the disjoint union of cobordisms with boundary $\SS^1$ each. 
To summarize, 
commutative algebra $A=\alpha(\SS^1)$ may be neither Frobenius nor of finite rank over $R$. For instance, this happens if the homomorphism (\ref{eq_nat_hom}) is an isomorphism. 

Thus, the situation may be more subtle than that of a usual 2-dimensional TQFT, which corresponds to a commutative Frobenius algebra over the ground ring $R$, see~\cite{A,Kc1, Kc2}. 

\vspace{0.07in} 

If $R$ is a field $\kk$, algebra $A$ is Frobenius iff it is finite dimensional over $\kk$, that is, iff the map (\ref{eq_nat_hom}) has a nontrivial kernel, for the simple reason  that any quotient of $\kk[x]$ by a non-trivial ideal is a Frobenius algebra.

\vspace{0.1in} 


\subsection{State space and the Hankel matrix}\label{subsec_state_hankel}
\quad
\vspace{0.07in} 

Recall that  $A$ is naturally  a quotient of $R[x]$ by  an ideal, via surjection (\ref{eq_nat_hom}). 
Equivalently, 
there's a homomorphism of $R$-modules 
\begin{equation}\label{eq_Rx_hom} 
     R[x] \lra \Hom_R(R[x],R) 
\end{equation} 
taking $f\in R[x]$ to the $R$-linear map $h\longmapsto (f,h)$, and we 
can identify $A=\alpha(\SS^1)$ with the image of $R[x]$ under this homomorphism. 

\vspace{0.1in} 

Let us come back to the generating function $Z(T)$ and its coefficients, see equation (\ref{eq_gen_fun}). 
The $R$-algebra $A=\alpha(\SS^1)$ is the quotient of the algebra $R[x]$ by the kernel of 
the $R$-bilinear form 
\begin{equation}\label{eq_bilin_form} 
(x^n,x^m) = \alpha(S_{n+m}) = \alpha_{n+m} \in R,
\end{equation}
\begin{equation}\label{eq_bilin_form_2} 
A=\alpha(\SS^1) \ \cong \  R[x]/\ker((,)).
\end{equation}
Indeed, gluing punctured tori $S^1_n$ and $S^1_m$ along the common boundary circle results in a closed surface of genus $n+m$ which evaluates to $\alpha_{n+m}$. 

\vspace{0.07in}

The matrix of the inner products in the spanning set $\{x^n\}_{n\in \N}$ is the Hankel matrix corresponding to the infinite sequence $\ualpha=(\alpha_0,\alpha_1, \dots )$:

\begin{equation}
     H =  H_{\ualpha}=
  \left( {\begin{array}{ccccc}
   \alpha_0 & \alpha_1 & \alpha_2 & \alpha_3 & \dots \\
   \alpha_1 & \alpha_2 & \alpha_3 & \alpha_4 & \dots \\
   \alpha_2 & \alpha_3 & \alpha_4 & \alpha_5 & \dots \\
   \alpha_3 & \alpha_4 & \alpha_5 & \alpha_6 & \dots \\
   \vdots & \vdots & \vdots & \vdots & \ddots \\
  \end{array} } \right)
\end{equation}
We view $H$ as a $\N\times \N$ matrix with rows and columns enumerated by $\N=\{0,1,2,\dots \}$ and entries in the commutative ring $R$. Matrix $H$ describes the map (\ref{eq_Rx_hom}) in the monomial basis of $R[x]$. Hankel matrices, also known  as  \emph{catalecticant} matrices in the invariant theory~\cite{IK}, are a starting point in the theory of orthogonal polynomials~\cite{Ch} and have many other applications~\cite{BS,Fa,G,Sch,T}.

We can identify the state space $\alpha(\SS^1)$ with $H(R^{\N})$, the image of the free $R$-module
$R^{\N}$ on generators $x^n$, $n\in \N$, under the endomorphism of $R^{\N}$ induced by the matrix $H$, and with the quotient of $R^{\N}$ by the kernel of this endomorphism: 
\[ A = \alpha(\SS^1) \ \cong \ H(R^{\N})  \ \cong \   R^{\N}/\mathrm{ker}(H).  
\]

Consider the ring $\sym=\Z[h_1,h_2,\dots]$ of symmetric functions in infinitely many variables with generators -- complete symmetric functions $h_n$. Assume that $\alpha_0\in R$ is invertible and form the homomorphism  
\begin{equation}\label{eq_hom_xi_p}
    \xi_0: \sym\lra R, \ \xi_0(h_n) = \alpha_0^{-1}\cdot\alpha_n , \ n\ge 1.
\end{equation}
Consider the Hankel matrix with the entries -- complete symmetric functions $h_n$ and $1$ in the upper left corner. A common convention is to set $h_0=1$, then $h_0$ can be entered there instead. 

\begin{equation}\label{eq_hankel_h}
     H_{h}=
  \left( {\begin{array}{ccccc}
   1 & h_1 & h_2 & h_3 & \dots \\
   h_1 & h_2 & h_3 & h_4 & \dots \\
   h_2 & h_3 & h_4 & h_5 & \dots \\
   h_3 & h_4 & h_5 & h_6 & \dots \\
   \vdots & \vdots & \vdots & \vdots & \ddots \\
  \end{array} } \right)
\end{equation}
Take the submatrix that consists of the first $N$ rows and columns of this matrix: 

\begin{equation} \label{eq_hankel_N_h}
      H_{[0,N-1],h} =  
  \left( {\begin{array}{ccccc}
   h_{0} & h_{1} & h_{2} & \dots & h_{N-1} \\
   h_{1} & h_{2} & h_{3} & \dots & h_{N} \\
   h_{2} & h_{3} & h_{4} & \dots & h_{N+1} \\
   \dots & \dots & \dots & \ddots & \dots \\
   h_{N-1} & h_{N} & h_{N+1} & \dots & h_{2N-2} \\
  \end{array} } \right)
\end{equation}
The upper left corner is $1=h_0$. Multiplying by the matrix $J_N$ of the  longest permutation (the  matrix of $1$'s on the antidiagonal and  zeros  elsewhere) results in the Toeplitz matrix 
\begin{equation} \label{eq_toeplitz_N}
      J_N H_{[0,N-1],h} =  
  \left( \begin{array}{ccccc}
   h_{N-1} & h_{N} & h_{N+1} & \dots & h_{2N-2} \\
   h_{N-2} & h_{N-1} & h_{N} & \dots & h_{2N-3} \\
   h_{N-3} & h_{N-2} & h_{N-1} & \dots & h_{2N-4} \\
   \dots & \dots & \dots & \ddots & \dots \\
   h_{0} & h_{1} & h_{2} & \dots & h_{N-1} \\
  \end{array}  \right)  . 
\end{equation}
The determinant of this matrix is given by  the Jacobi-Trudi formula, as the Schur function for the partition $\lambda_N:=((N-1)^N)=(N-1,N-1,\dots,N-1)$. Consequently, 
\begin{equation} 
  \det H_{[0,N-1],h}  =  (-1)^{N(N-1)/2}s_{\lambda_N}(h) \in \sym.
\end{equation}



Under the homomorphism $\xi_0$ above, the Schur function $s_{\lambda_{N}}(h)$ goes to 
\begin{equation}
s_{\lambda_{N}}(\xi_0(h)) = s_{\lambda_{N}}(\frac{\alpha_1}{\alpha_0},\dots, \frac{\alpha_{2N-2}}{\alpha_0}).
 \end{equation}
  
 \begin{prop} \label{prop_indep_schur} 
 Suppose that $R$ is an integral domain and $\alpha_0$ is invertible. Then vectors $1,x,\dots, x^{N-1}$ are $R$-linearly independent  in $A$ iff the Schur function  $s_{\lambda_{N}}(\xi_0(h))$ is not zero in $R$. 
 \end{prop}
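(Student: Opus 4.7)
The plan is to recognize $R$-linear independence of $1,x,\dots,x^{N-1}$ in $A$ as non-vanishing of the $N\times N$ principal Hankel minor $\det H_{[0,N-1]}$, and then invoke the Jacobi--Trudi identity derived just above, $\det H_{[0,N-1]} = (-1)^{N(N-1)/2}\alpha_0^N\, s_{\lambda_N}(\xi_0(h))$. Because $\alpha_0$ is invertible and $R$ is an integral domain, $s_{\lambda_N}(\xi_0(h))$ vanishes in $R$ exactly when this determinant does, so the claim reduces to: $1,x,\dots,x^{N-1}$ are $R$-linearly independent in $A$ iff $\det H_{[0,N-1]}\ne 0$.

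Both directions rest on the description $A=R[x]/\ker((,))$ with $(x^i,x^j)=\alpha_{i+j}$: a combination $v=\sum_{i=0}^{N-1}c_ix^i$ vanishes in $A$ iff $(v,x^j)=\sum_i c_i\alpha_{i+j}=0$ for every $j\ge 0$. For the implication ``nonzero Schur $\Rightarrow$ independence'', any $R$-linear relation in $A$ forces in particular $H_{[0,N-1]}\,c=0$ (reading only the constraints $j=0,\dots,N-1$). Multiplying by the adjugate of $H_{[0,N-1]}$ yields $\det(H_{[0,N-1]})\cdot c=0$, so since $R$ is a domain and the determinant is nonzero, we conclude $c=0$.

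For the reverse implication I would pass to the fraction field $Q=Q(R)$; this is legitimate because $A$ is torsion free over $R$, so $R$- and $Q$-linear independence coincide. If $\det H_{[0,N-1]}=0$, the matrix is $Q$-singular and yields a nonzero $c\in Q^N$ with $H_{[0,N-1]}\,c=0$; clearing denominators gives $c\in R^N\setminus\{0\}$ satisfying $(v,x^j)=0$ for $0\le j<N$, where $v=\sum c_ix^i$. The task is then to promote these $N$ orthogonalities to the full family $(v,x^j)=0$ for all $j\ge 0$, so that $v\in\ker((,))$ and furnishes the required nontrivial $R$-linear dependence in $A$.

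The hard part will be this promotion. To close the gap I would exploit the fact that $\ker((,))$ is an ideal of $R[x]$: from $(xu,w)=(u,xw)$ the kernel is closed under multiplication by $x$, and combined with the shift-invariance of the Hankel entries and the invertibility of $\alpha_0$, one expects to bootstrap the finite system of orthogonalities to all $j\ge 0$, choosing $c$ so that $v$ has minimal degree among kernel candidates. This bootstrap is where the special structure of the Hankel pairing (as opposed to an arbitrary symmetric form on $R[x]_{<N}$) is genuinely used, and where I expect the proof to be most delicate.
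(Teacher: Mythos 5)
Your first direction is correct and is exactly the content of the paper's (one-sentence) proof: since $\det H_{[0,N-1]}=(-1)^{N(N-1)/2}\alpha_0^N\,s_{\lambda_N}(\xi_0(h))$ and $R$ is a domain with $\alpha_0$ a unit, non-vanishing of the Schur function gives a non-vanishing Gram determinant, and your adjugate argument then kills any relation among $1,x,\dots,x^{N-1}$.

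The reverse direction, which you correctly isolate as the ``promotion'' of the $N$ orthogonality conditions $(v,x^j)=0$, $0\le j\le N-1$, to all $j\ge 0$, is a genuine gap --- and it cannot be closed, because that implication is false under the stated hypotheses. Take $R=\Q$, $N=2$, and $\ualpha=(1,1,1,5,0,0,\dots)$. Then $\det H_{[0,1]}=\alpha_0\alpha_2-\alpha_1^2=0$, so $s_{\lambda_2}(\xi_0(h))=h_1^2-h_2\mapsto 1-1=0$; yet a relation $c_0+c_1x=0$ in $A$ requires $c_0\alpha_j+c_1\alpha_{j+1}=0$ for \emph{every} $j$, and $j=0$ forces $c_0=-c_1$ while $j=2$ forces $c_0+5c_1=4c_1=0$, so $c_0=c_1=0$ and $1,x$ are independent in $A$. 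Your proposed bootstrap stalls at the very first step: shift-invariance gives $(xv,x^j)=(v,x^{j+1})$, so to reach $(v,x^N)$ you would already need an orthogonality for $xv$ that you have not established, and the ideal property of $\ker((,))$ is only usable once you know $v$ lies in the kernel --- which is what you are trying to prove. For what it is worth, the paper's proof simply asserts that the Gram determinant ``is non-zero precisely when there is no linear relation'' and does not address this either; the author's own remark after Proposition~\ref{prop_indep} concedes that vanishing of a finite Hankel minor does not imply linear dependence (one would need the columns of the infinite $\N\times N$ matrix to be dependent). Only the forward implication --- the one you actually proved --- is valid, and invertibility of $\alpha_0$ does not rescue the converse.
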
 
 \begin{proof}
 The Gram determinant 
 \[ \det H_{[0,N-1]} = \alpha_0^N \cdot s_{\lambda_N}(\xi_0(h))
 \] 
 is non-zero precisely when there is no linear relation on $1,x,\dots, x^{N-1}$ with coefficients in $R.$
  \end{proof}
  
 If we pick $N$ consecutive indices  and look at monomials 
 $x^k,x^{k+1},\dots, x^{k+N-1}$, the corresponding matrix of complete symmetric functions  
\begin{equation} \label{eq_hankel_N_k_h}
      H_{[k,k+N-1],h} =  
  \left( {\begin{array}{ccccc}
   h_{2k} & h_{2k+1} & h_{2k+2} & \dots & h_{2k+N-1} \\
   h_{2k+1} & h_{2k+2} & h_{2k+3} & \dots & h_{2k+N} \\
   h_{2k+2} & h_{2k+3} & h_{2k+4} & \dots & h_{2k+N+1} \\
   \dots & \dots & \dots & \ddots & \dots \\
   h_{2k+N-1} & h_{2k+N} & h_{2k+N+1} & \dots & h_{2(k+N-1)} \\
  \end{array} } \right)
\end{equation}
has determinant 
\begin{equation}
    \det(H_{[k,k+N-1],h}) =  (-1)^{N(N-1)/2} s_{\lambda_{N-1,k}}(h),
\end{equation}
where $s_{\lambda_{N,k}}(h)$ is the Schur function for the partition 
$\lambda_{N,k}= ((N+2k-1)^N)=(N+2k-1,\dots,N+2k-1)$. 
Assume that $k>0$ and consider homomorphism 
\begin{equation}\label{eq_hom_xi}
     \xi: \sym\lra R, \ \xi(h_n) = \alpha_n , \ n\ge 1.
\end{equation}
Under $\xi$, matrix  $H_{[k,k+N-1],h}$ goes to the Gram matrix 
\begin{equation} \label{eq_hankel_N_k}
      H_{[k,k+N-1]} =  
  \left( {\begin{array}{ccccc}
   \alpha_{2k} & \alpha_{2k+1} & \alpha_{2k+2} & \dots & \alpha_{2k+N-1} \\
   \alpha_{2k+1} & \alpha_{2k+2} & \alpha_{2k+3} & \dots & \alpha_{2k+N} \\
   \alpha_{2k+2} & \alpha_{2k+3} & \alpha_{2k+4} & \dots & \alpha_{2k+N+1} \\
   \dots & \dots & \dots & \ddots & \dots \\
   \alpha_{2k+N-1} & \alpha_{2k+N} & \alpha_{2k+N+1} & \dots & \alpha_{2(k+N-1)} \\
  \end{array} } \right)
\end{equation}
of the set of vectors $x^k,\dots, x^{k+N-1}$ in $A$, with the determinant
\begin{equation}
    \det(H_{[k,k+N-1]}) = (-1)^{N(N-1)/2} s_{\lambda_{N,k}}(\alpha),
\end{equation}
where we substitute $\alpha_m$ for the complete symmetric function $h_m$ in the expression for the Schur function. 

\begin{prop} \label{prop_indep} 
For an integral domain $R$, $\alpha$ as in Section~\ref{subset_state_circle}, and $k,N>0$, elements $x^k,x^{k+1},\dots, x^{k+N-1}$ are $R$-linearly  independent in $A$ if the Schur function $s_{\lambda_{N,k}}(\alpha)\not= 0$. 
\end{prop}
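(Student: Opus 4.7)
The plan is to mimic, in abbreviated form, the easy direction of the argument used for Proposition~\ref{prop_indep_schur}. First I would identify the Gram matrix of the ordered tuple $(x^k, x^{k+1}, \dots, x^{k+N-1})$ with respect to the bilinear form on $A$. By~(\ref{eq_bilin_form}), the $(i,j)$-entry is $(x^{k+i}, x^{k+j}) = \alpha_{2k+i+j}$, which is exactly the $(i,j)$-entry of the Hankel submatrix $H_{[k,k+N-1]}$ from~(\ref{eq_hankel_N_k}). The determinant of this matrix has already been identified as $(-1)^{N(N-1)/2}\,s_{\lambda_{N,k}}(\alpha)$ via the Jacobi-Trudi step performed just before the statement, so the hypothesis of the proposition amounts to the nonvanishing of this Gram determinant in $R$.

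I would then argue the contrapositive. Suppose there is a nontrivial $R$-linear relation $\sum_{i=0}^{N-1} c_i x^{k+i} = 0$ in $A = R[x]/\ker((,))$. Pairing this relation against $x^{k+j}$ for $j = 0, \dots, N-1$ yields $\sum_i c_i \alpha_{2k+i+j} = 0$ for each such $j$, so $(c_0, \dots, c_{N-1})^T$ lies in the right kernel of $H_{[k,k+N-1]}$ regarded as a matrix over $R$. Since $R$ is an integral domain, I would pass to its fraction field $Q$: the vector remains nonzero in $Q^N$ and still annihilates $H_{[k,k+N-1]}$, forcing $\det H_{[k,k+N-1]} = 0$ over $Q$, hence already in $R$. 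This contradicts the assumed nonvanishing of $s_{\lambda_{N,k}}(\alpha)$.

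No real obstacle is expected here; the argument is just the linear-algebra half of the proof of Proposition~\ref{prop_indep_schur}. It is worth noting \emph{why} the statement is only an implication and not an \emph{iff}, in contrast to Proposition~\ref{prop_indep_schur}: a nontrivial relation in $A$ forces the pairings $(\sum_i c_i x^{k+i}, x^m)$ to vanish for \emph{every} $m \ge 0$, whereas the finite $N \times N$ submatrix $H_{[k,k+N-1]}$ only records the conditions for $m$ in the window $[k, k+N-1]$. A converse would require exploiting the full infinite sequence $\undalpha$, which is unnecessary here since only the one-sided implication is claimed.
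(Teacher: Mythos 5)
Your proposal is correct and is essentially the argument the paper intends: the Gram matrix of $x^k,\dots,x^{k+N-1}$ is $H_{[k,k+N-1]}$, whose determinant is $(-1)^{N(N-1)/2}s_{\lambda_{N,k}}(\alpha)$, and nonvanishing of a Gram determinant over an integral domain (passing to the fraction field) rules out a nontrivial linear relation. Your closing remark on why only one implication holds also matches the paper's own comment following the proposition.
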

Taking $k>0$ avoids the need to restrict to invertible $\alpha_0$ and rescale homomorphism $\xi$ to $\xi_0$. 

\vspace{0.1in} 

We see that for a 'sufficiently large' integral domain $R$ and 'generic' $\alpha$, so that $s_{\lambda_{N,1}}(\alpha)\not= 0$ for $N>0$, vectors $x,x^2,\dots$ are  linearly independent in $A$. In particular, such $A$ has infinite rank as an $R$-module and is not a Frobenius $R$-algebra. 
For $A$ to have infinite rank over an integral domain $R$ it suffices to require that for any $N>0$ there is $k>0$ such that $s_{\lambda_{N,k}}(\alpha)\not= 0$. By a module $M$ of 'infinite rank' over an integral domain $R$ we may mean, for instance, a module such that the $Q(R)$-vector space $M\otimes_R Q(R)$ is infinite-dimensional, where $Q(R)$ is the field of fractions of $R$. 

Linear independence of vectors $x,x^2,\dots$ is equivalent to linear independence of vectors $1,x,x^2,\dots$, due to our construction of the bilinear form.  
Having $R[x]\lra A$ an isomorphism is equivalent to not having any skein relations on cobordisms with boundary $\SS^1$, 
modulo skein relations on closed cobordisms, which are $\alpha(S_g)=\alpha_g$, $g\ge 0$. In such theories $A=A(1)\cong R[x]$ is not Frobenius over $R$.
 
\vspace{0.07in} 

Note that the implication in Proposition~\ref{prop_indep} is only one way -- the matrix in (\ref{eq_hankel_N_k}) having determinant zero does not imply linear dependence of the vectors $x^k,\dots, x^{k+N-1}$. For that we would need linear dependence of the columns of the infinite $\N \times N$ matrix that contains  all inner products  $\alpha_{i+j}=(x^i,x^j)$ for $i\ge 0$ and $k\le j\le k+N-1$ as the entries. 

\vspace{0.07in} 

More general Schur functions can be recovered, up to sign, as Gram determinants of inner products of a sequence of consecutive monomials $x^k,x^{k+1},\dots, x^{k+N-1}$ and an arbitrary sequence of monomials 
$x^{k_1},\dots, x^{k_N}$ with increasing exponents $k_1<\dots<k_N$, via the general Jacobi-Trudi complete symmetric functions determinant. General Gram determinants between two sequences of monomials in $x$ of equal length will produce skew Schur functions~\cite{Mc} via the corresponding  Jacobi-Trudi formula. 

The relation between Toeplitz (or Hankel) determinants  and  the Jacobi-Trudi formula has been rediscovered many times, 
see the introduction section in Maximenko and  Moctezuma-Salazar~\cite{MMS} 
for a review of the literature. It's more common to substitute elementary rather then complete symmetric functions for the entries of Toeplitz matrices. When the number of variables is finite, elementary symmetric functions eventually vanish and the matrix consists of zeros outside of several diagonals surrounding the main diagonal, becoming a banded Toeplitz matrix. 

\vspace{0.07in} 

\emph{Remark:}
If we specialize from the ring $\sym$ of symmetric functions in infinitely many variables to the quotient ring $\sym_N$ of $N$-variable symmetric functions, Schur functions become characters of irreducible $GL(N)$-modules. Partition $\lambda_N=((N-1)^N)$ describes the character of the $(N-1)$-st tensor power of the determinantal one-dimensional representation $\Lambda^N(\C^N)$, with 
\[ s_{\lambda_N}(h) = (\gamma_1\dots\gamma_N)^{N-1}, 
\]
and the Gram matrix determinant for $1,x, \dots, x^{N-1}$ is given by this product, up to sign.

\vspace{0.1in} 


\subsection{Rational theories over a field.}\label{subset_rat_th}
\quad
\vspace{0.07in} 

We assume in this section that the ground ring $R$ is a field $\kk$. 
Let  $I=\{i_1, \dots, i_n\}$, $0\le i_1< i_2< \dots < i_n,$ be a finite subset of $\Z_+$ of cardinality $n$. 

Consider the principal minor $H_{I,\ualpha}$ of $H_{\ualpha}$ for the sequence $I$ of indices, 

\begin{equation} \label{eq_hankel_minor}
      H_I = H_{I,\ualpha} = 
  \left( {\begin{array}{ccccc}
   \alpha_{2i_1} & \alpha_{i_1+i_2} & \alpha_{i_1+i_3} & \dots & \alpha_{i_1+i_n} \\
   \alpha_{i_2+i_1} & \alpha_{2 i_2} & \alpha_{i_2+i_3} & \dots & \alpha_{i_2+i_n} \\
   \alpha_{i_3+i_1} & \alpha_{i_3+i_2} & \alpha_{2 i_3} & \dots & \alpha_{i_3+i_n} \\
   \dots & \dots & \dots & \ddots & \dots \\
   \alpha_{i_n+i_1} & \alpha_{i_n+i_2} & \alpha_{i_n+i_3} & \dots & \alpha_{2i_n} \\
  \end{array} } \right)
\end{equation}

As we've discussed above, if 
$\det(H_I)\not= 0$, then the set of monomials 
$\{x^{i_1},\dots,x^{i_n}\}$ is linearly independent in the state space $\alpha(\SS^1)$ of $\SS^1$.  This is a sufficient  condition. For the implication  the  other way, we need a linear dependence of columns $i_1,\dots, i_n$ of the infinite  matrix  $H_{\alpha}$. 

Denote by $H_{M,N}$ the submatrix of $H$ of size $(N+1)\times (N+1)$ that selects entries on the intersection of  consequent rows $\{i, i+1,\dots, i+N\}$ and consequent columns $\{j,j+1,\dots, j+N$ with $i+j=M$: 

\begin{equation}
      H_{M,N} =
  \left( {\begin{array}{cccc}
   \alpha_{M} & \alpha_{M+1} & \dots & \alpha_{M+N} \\
   \alpha_{M+1} & \alpha_{M+2}  & \dots & \alpha_{M+N+1} \\
    \vdots & \vdots & \ddots &  \vdots \\
   \alpha_{M+N} & \alpha_{M+N+1} &  \dots & \alpha_{M+2N} \\
  \end{array} } \right)
\end{equation}
This matrix does not  depend on a choice  of $i$ between $0$ and  $M$. 

\begin{theorem} \label{thm_rational}
 The following are equivalent: 
 \begin{enumerate}
     \item $Z_{\ualpha}(T)$ is the Taylor series of a rational function in $T$. 
     \item There is a finite sequence $q_0,\dots,q_N\in\kk$, with $q_N\not= 0$, such that $q_0 \alpha_m + q_1\alpha_{m+1}+\dots + q_N \alpha_{m+N}=0$ for all $m>>0$.
     \item There exist $N$ and $M$ such that $\det(H_{m,N})=0$ for all $m>M$. 
     \item There exist $N$ and $M$ such that $\det(H_{m,n})=0$ for all $m>M$ and $n>N$. 
     \item $\kk$-vector space $A=\alpha(\SS^1)$ is finite-dimensional. 
 \end{enumerate}
\end{theorem}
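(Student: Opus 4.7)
I plan to establish the equivalences via the cycle $(1) \Leftrightarrow (2) \Leftrightarrow (5) \Rightarrow (4) \Rightarrow (3) \Rightarrow (2)$. Three of the steps are essentially direct, $(4) \Rightarrow (3)$ is immediate, and the substantive work is in $(3) \Rightarrow (2)$.

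For $(1) \Leftrightarrow (2)$, I encode the recurrence in the reversed polynomial $Q(T) = q_0 T^N + q_1 T^{N-1} + \ldots + q_N$ and observe that the coefficient of $T^k$ in $Q(T) Z(T)$ equals $\sum_{j=0}^N q_j \alpha_{k-N+j}$ for $k \geq N$; hence (2) is equivalent to $Q(T) Z(T)$ being a polynomial, and so to rationality of $Z(T)$ (the condition $q_N \neq 0$ matches $Q(0) \neq 0$ for the Taylor-expansion denominator). For $(2) \Leftrightarrow (5)$, I use the identification $A \cong \kk[x]/\ker((,))$ from Section \ref{subsec_state_hankel}, together with the fact that $p(x) = \sum c_j x^j$ lies in $\ker((,))$ precisely when $\sum c_j \alpha_{j+k} = 0$ for all $k \geq 0$. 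A recurrence valid for $m \geq M_0$ produces the nonzero element $x^{M_0}(q_0 + q_1 x + \ldots + q_N x^N) \in \ker((,))$, so $A$ is a nontrivial quotient of the PID $\kk[x]$ and hence finite-dimensional. Conversely, finite-dimensional $A$ forces $\ker((,)) = (g(x))$ for a nonzero $g$, whose coefficients yield a recurrence valid for all $m \geq 0$.

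For $(5) \Rightarrow (4)$, set $d := \dim_\kk A$. Then $H_{m,n}$ is the Gram matrix of the bilinear form against the two $(n+1)$-tuples of monomials $(x^0, \ldots, x^n)$ and $(x^m, \ldots, x^{m+n})$ in $A$, so it factors through $A$ and has rank at most $d$; for $n \geq d$ this forces $\det H_{m,n} = 0$ for every $m \geq 0$. The implication $(4) \Rightarrow (3)$ is immediate on taking $n = N+1$.

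The main obstacle will be $(3) \Rightarrow (2)$. My plan is to choose $N$ minimal such that $\det H_{m,N} = 0$ for all $m > M$. The case $N = 0$ gives $\alpha_m = 0$ eventually and is trivial. For $N \geq 1$, writing $D_n(m) := \det H_{m,n}$, I apply the Desnanot--Jacobi (Sylvester) identity to the Hankel matrix $H_{m,N}$ to obtain
\[
D_N(m)\, D_{N-2}(m+2) \;=\; D_{N-1}(m)\, D_{N-1}(m+2) \;-\; D_{N-1}(m+1)^2.
\]
Since $D_N(m) = 0$ for $m > M$, this reduces to $D_{N-1}(m+1)^2 = D_{N-1}(m)\, D_{N-1}(m+2)$, so a single zero of $D_{N-1}$ beyond $M$ propagates forward by induction to all larger indices, contradicting the minimality of $N$. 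Hence $D_{N-1}(m) \neq 0$ for every $m > M$; for such $m$ the singularity of $H_{m,N}$ together with nonsingularity of its upper-left $N \times N$ block yields unique coefficients $c^{(m)}_j$ with
\[
\alpha_{m+N+l} \;=\; \sum_{j=0}^{N-1} c^{(m)}_j\, \alpha_{m+j+l}, \qquad 0 \leq l \leq N.
\]
Comparing this relation at $m$ with the corresponding one at $m+1$ on the overlap $1 \leq l \leq N$ forces the difference $c^{(m)} - c^{(m+1)}$ into the kernel of $H_{m+1,N-1}$, which is trivial. Thus $c^{(m)}$ is independent of $m > M$, and the $l = 0$ case is the desired recurrence with leading coefficient $1$.
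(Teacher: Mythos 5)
Your proposal is correct. It differs from the paper mainly in being self-contained: the paper proves only the two implications involving (5) (condition (2) gives a spanning set of $A$ by low powers of $x$; finite-dimensionality of $A$ forces all sufficiently large Gram determinants to vanish by a rank count) and delegates the equivalence of (1)--(4) to Koblitz/Kronecker. You reprove that cited core, and your key ingredient --- the Desnanot--Jacobi identity $D_N(m)D_{N-2}(m+2)=D_{N-1}(m)D_{N-1}(m+2)-D_{N-1}(m+1)^2$ for Hankel minors, used to propagate a single zero of $D_{N-1}$ forward and thereby force $D_{N-1}(m)\neq 0$ for all $m>M$ --- is essentially the classical Kronecker argument that the references contain. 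Your $(2)\Leftrightarrow(5)$ step is also slightly cleaner than the paper's, since you exploit that $\ker((,))$ is an ideal of the PID $\kk[x]$ (because $(xp,h)=(p,xh)$), so finite-dimensionality is literally equivalent to the existence of a nonzero kernel element, whose coefficients are the recurrence. Two small points worth making explicit if you write this up: for $N=1$ the Desnanot--Jacobi identity requires the convention $D_{-1}\equiv 1$ (it then reduces to the $2\times 2$ determinant formula), and in $(1)\Rightarrow(2)$ one should note that a rational function admitting a Taylor expansion at $0$ can be written with denominator not vanishing at $0$, which is what guarantees $q_N\neq 0$. What your route buys is a complete proof with no external citation; what the paper's route buys is brevity, at the cost of leaving the genuinely nontrivial implication $(3)\Rightarrow(2)$ to the literature.
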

\emph{Proof:} Equivalence of (1)-(4) is proved in~\cite[Chapter V.5, Lemma 5]{Kb}, also see~\cite[Theorem 5.1]{Di}.
Condition (2) implies that for $m>>0$, $\alpha_{m+N}$ is a linear combination of $\alpha_i$'s with a smaller index than $m+N$, so that $\alpha_0, \dots, \alpha_{m+N-1}$ span $\alpha(\SS^1)$ as a $\kk$-vector space. 
Vice versa, if $\alpha(\SS^1)$ has a finite dimension $K$, 
then $\deg(H_{m,n})=0$ for all $n\ge K$ and all $m$, implying condition (4). 
$\square$  

\emph{Remark:} Essentially this theorem (without  the topological  theory interpretation (5)) appears in Drowk's proof of rationality of zeta function of an algebraic variety over a finite field via p-adic analysis, see \cite{Dw} and references cited in the proof above.  Proof of Proposition~\ref{prop_image_fd} below explains another approach to this theorem. The theorem, in fact, goes back to Kronecker, see~\cite[Section 8.3.1]{Fh},~\cite[Lemma~I.3.III]{S}, and~\cite{O}. 

\vspace{0.1in}

\emph{Remark:} Over a  field $\kk$, it's natural to split two-dimensional topological theories as considered in this paper into two types: 
\begin{enumerate}
    \item  $A$ is finite-dimensional over $\kk$. Equivalently, $Z(T)$ is a rational function.
    \item $A$ is infinite-dimensional over $\kk$. Equivalently, $Z(T)$ is not a rational function.
\end{enumerate}
Theories of type (1) naturally  split into two classes: 
\begin{enumerate}
    \item[(a)] Maps $\alpha_{1,1}$ in (\ref{eq_alpha_one_one}) is an isomorphism. Equivalently, the theory is a genuine 2D TQFT, a  neck-cutting relation exists for the theory, $1_{tu}$ is in the image  of $\alpha_{1,1}$.  
    \item[(b)] Map $\alpha_{1,1}$ is not surjective. Equivalently, $1_{tu}$ is  not in the  image of $\alpha_{1,1}$. 
\end{enumerate}

\vspace{0.1in}


\subsection{Semi-universal rational theories \texorpdfstring{$\alpha_{M,N}$}{alpha(M,N)}} \label{subsec_semiu}
\quad
\vspace{0.07in} 

Over an algebraically closed field, the  numerator  and denominator of  a rational function both decompose into products of linear factors. Without assuming that the field is algebraically closed, consider such  a  factorizable rational function 
\begin{equation}\label{eq_Z_rat_1} 
    Z(T)=\frac{P(T)}{Q(T)} = \frac{(1+\beta_1 T)\dots (1+\beta_N T)}{(1-\gamma_1 T)\dots (1-\gamma_M T)},
\end{equation}
where we restricted to the case $Z(0)=1$. 
We can write 
\begin{equation}
    P(T)= e_N(\beta)T^N + e_{N-1}(\beta)T^{N-1}+\dots + e_1(\beta)T+e_0(\beta),
\end{equation}
where 
\[ e_k(\beta) = \sum_{1\le  i_1<\dots < i_k\le N} \beta_{i_1}\dots \beta_{i_k} 
\] 
is the $k$-elementary symmetric function in $\beta_1, \dots, \beta_N$. The convention  is that  $e_k(\beta)=0$ for $k>N$ and  $k<0$ and $e_0=1$. 

Likewise, 
\begin{equation}
    \frac{1}{Q(T)} = \sum_{k\ge 0} h_k(\gamma) T^k,
\end{equation}
where 
\[ h_k(\gamma) = \sum_{i_1\le \dots \le i_k\le M} \gamma_{i_1}\dots \gamma_{i_k} 
\] 
is the $k$-th complete symmetric function of $\gamma_1, \dots, \gamma_M$. Set  $h_0(\gamma)=1$ and $h_{k}(\gamma)=0$ for $k<0$. 

Then  
\begin{equation}
    Z(T) = \left( \sum_{i=0}^N e_i(\beta) T^{i}\right) 
    \cdot \left( \sum_{k\ge 0} h_k(\gamma) T^k \right),
\end{equation}
and 
\begin{equation} \label{eq_Z_N_M_1}
    Z(T) =  \sum_{k\ge 0} \left(\sum_{i=0}^{\min(k,N)} 
    e_{i}(\beta) h_{k-i}(\gamma) \right)  T^{k}. 
\end{equation}
Note that coefficients at powers of $T$ are symmetric functions in variables $\beta_1, \dots, \beta_N$ and in $\gamma_1, \dots, \gamma_M$. 

It's convenient to introduce coefficient rings 
\begin{eqnarray} \label{eq_ring_Rp_1}
 R'_{M,N} & = & \kk[\gamma_1, \dots, \gamma_M,\beta_1, \dots, \beta_N], \\ \label{eq_ring_R_2}
 R_{M,N} & = & \kk[h_1(\gamma), \dots, h_M(\gamma),e_1(\beta),\dots, e_N(\beta)],
 \end{eqnarray}
 where $\kk$ is either a ground field or $\Z$ and work over $R_{M,N}$ and $R'_{M,N}$ as the ground ring $R$ rather than over a field. In other words, we start over a field where the numerator and the denominator factorize, 
 but  then turn (signed  or inverse) roots of these factorizations into formal variables  and work over the corresponding polynomial ring $R_{M,N}$ or over its subring of $S_M\times S_N$-invariant functions. 
 
 Denote by by $\sym_M(\gamma)\subset \kk[\gamma_1, \dots, \gamma_M]$ the subring of symmetric functions in $\gamma_1,\dots ,\gamma_M$ and by $\sym_N(\beta)\subset \kk[\beta_1, \dots, \beta_N]$ the subring of symmetric functions in $\beta_1,\dots ,\beta_N$. Then
 \begin{equation}
     R_{M,N} = \sym_M(\gamma)\otimes \sym_N(\beta)
 \end{equation}
 is the  tensor  product of these  rings. 
 It is the subring of $R'_{M,N}$ of $S_M\times S_N$-invariants, under the permutation action, and  can also  be written as in (\ref{eq_ring_R_2}). As an $R_{M,N}$-module, $R'_{M,N}$ is free of rank $N!M!$. 
 
 We use formula (\ref{eq_Z_N_M_1}) to define two topological theories, $\alpha'_{M,N}$ and $\alpha_{M,N}$. 
 Topological theory $\alpha'_{M,N}$ is defined over the ring  $R'_{M,N}$ and to a closed surface of genus $k$ it assigns the coefficient at $T^k$ in (\ref{eq_Z_N_M_1}). TQFT $\alpha_{M,N}$ is defined over $R_{M,N}$ but otherwise is given by the same power series. When there's no possibility of confusion, we may denote $\alpha_{M,N}$ simply by $\alpha$, and denote  $\alpha'_{M,N}$ by  $\alpha'$.  
 
 In $\alpha_{M,N}$, since the ground ring $R_{M,N}$ does not contain $\beta_i$ and $\gamma_j$'s, only their symmetric functions, it may be convenient to denote $e_i(\beta)$ and $h_i(\beta)$ simply by  $e_i$ and $h_i$ and denote $e_j(\gamma)$ and $h_j(\gamma)$ by $\ove_i$ and  $\ovh_j$. 
 
 In particular, for the first few coefficients of $Z(T)$,
 \begin{eqnarray*}
 \alpha_0 &  = &  e_0 \ = \ 1 , \\
 \alpha_1 & = &  \ovh_1 + e_1, \\
 \alpha_2 &  = &  \ovh_2 + e_{1}\ovh_1 + e_{2},  
\end{eqnarray*}
and, in general, 
\begin{equation}\label{eq_alpha_k} 
\alpha_k = \sum_{i=0}^{\min(k,N)} 
    e_{i}(\beta) h_{k-i}(\gamma) = \sum_{i=0}^{\min(k,N)} e_i \ovh _{k-i}. 
\end{equation} 
As we've mentioned, setting $e_i=0$ for $i>N$, $e_0=1$ and $\overline{h}_i=e_i=0$  for $i<0$ allows to remove 
the limits in the sum formula above and 
write 
\begin{equation} 
\alpha_k = \sum_{i} 
    e_i \overline{h}_{k-i}. 
\end{equation}
 The number of non-zero terms grows until we reach 
\[ \alpha_N = \ovh_N+e_1\ovh_{N-1}+\dots + e_{N-1} \ovh_1 +e_N,
\] 
where it stabilizes at $N+1$ terms, with 
\[ \alpha_k = \ovh_{k}+e_{1}\ovh_{k-1}+\dots + e_{N-1} \ovh_{k-N+1} +e_N\ovh_{k-N},
\]
for $k\ge N$. 

\vspace{0.1in}

Over $R_{M,N}$ and $R'_{M,N}$ these theories can be made graded, with 
 \begin{equation} 
 \deg(T)= -2, \ \deg(\beta_i)=\deg(\gamma_j)=2,
\end{equation}
so that power series $Z(T)$ is homogeneous of zero degree, and the state spaces $A(k)$ associated to the union of $k$ circles of the theory $\alpha_{M,N}$ and $A'(k)$ of the theory $\alpha'_{M,N}$ are graded modules over these ground rings. 

\vspace{0.1in}

We have
\begin{eqnarray*}
Z(T)Q(T) & = & \left( \sum_{k\ge 0}\alpha_k T^k \right) \cdot 
\left( \sum_{i=0}^M  (-1)^i e_i(\gamma) T^i\right) \\
 & = & \sum_{n\ge 0} \left( \sum_{i=0}^{\min(M,n)} (-1)^i e_i(
 \gamma) \alpha_{n-i}\right) T^n
\end{eqnarray*}
and 
\begin{equation}
Z(T)Q(T)  =  P(T) = \sum_{n=0}^N e_{n}(\beta)T^n . 
\end{equation}
Hence, for $n\ge \max(N+1,M)$, 
\begin{eqnarray*}
    0 & = & \sum_{i=0}^M  (-1)^i e_i(\gamma) \alpha_{n-i} = \alpha_n - e_1(\gamma) \alpha_{n-1}+\dots + (-1)^M e_{M}(\gamma)\alpha_{n-M} \\
    & = & \alpha_n - \ove_1\alpha_{n-1}+\dots + (-1)^M\ove_M\alpha_{n-M}, 
\end{eqnarray*}
using notation  $\overline{e}_k =e_k(\gamma)$, by analogy with $\overline{h}_k=h_k(\gamma)$. The coefficients of this equation take values in $R_{M,N}$ and they do not depend on $n$. Let 
\begin{equation} \label{eq_K} 
K= K_{M,N} = \max(N+1,M).
\end{equation} 
We can conclude that, for $n\ge K$, element $x^n=[\SS^1_n]$ of $A=\alpha_{M,N}(\SS^1)$ is a linear combination, with coefficients in $R_{M,N}$, of elements of smaller index:
\begin{equation} \label{eq_linear1}
 x^n - e_1(\gamma) x^{n-1}+\dots +(-1)^M e_M(\gamma) x^{n-M}=0. 
\end{equation}
 Consequently,  
\begin{equation} \label{eq_linear2}
 x^{K} - e_1(\gamma) x^{K-1}+\dots +(-1)^M e_M(\gamma) x^{K-M}=0, 
\end{equation}
and the  following relation holds in $A$: 
\begin{equation} \label{eq_linear3}
 (x^M - e_1(\gamma) x^{M-1}+\dots +(-1)^M e_M(\gamma))x^{K-M}=0. 
\end{equation}
Denote by $r_{M,N}$ the left hand side of this equation, 
\begin{equation}\label{eq_r_N_M}
    r_{M,N} =  (x^M - e_1(\gamma) x^{M-1}+\dots +(-1)^M e_M(\gamma))x^{K-M}. 
\end{equation}
The following proposition follows.  
\begin{prop} \label{eq_prop_surj} 
There is a surjective ring homomorphism 
\begin{equation}
    R_{M,N}[x]/(r_{M,N}) \lra A,
\end{equation}
and elements $1,x,\dots, x^{K-1}$ span $A=\alpha_{M,N}(\SS^1)$ as a module over $R_{M,N}$, where $K = \max(N+1,M).$
\end{prop}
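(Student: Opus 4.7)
The plan is to construct the surjection by factoring the natural homomorphism $\rho_\alpha: R_{M,N}[x]\to A$ from (\ref{eq_nat_hom}) through the quotient by $r_{M,N}$, and then to deduce the spanning statement by induction on the exponent, driven by the recursion (\ref{eq_linear1}). First I note that $\rho_\alpha$ is surjective because, as recorded in Section~\ref{subset_state_circle}, $A=A(1)$ is spanned by the one-holed surface classes $x^n=[\SS^1_n]$ for $n\ge 0$. Since equation (\ref{eq_linear3}) already asserts $r_{M,N}=0$ in $A$, the principal ideal $(r_{M,N})\subset R_{M,N}[x]$ lies in $\ker\rho_\alpha$, so $\rho_\alpha$ descends to the desired surjection $R_{M,N}[x]/(r_{M,N}) \twoheadrightarrow A$.

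For the spanning claim I argue by induction on $n\ge 0$ that $x^n$ lies in the $R_{M,N}$-linear span of $\{1,x,\dots,x^{K-1}\}$ inside $A$. The base cases $0\le n\le K-1$ are tautological. For $n\ge K$, relation (\ref{eq_linear1}) gives
\[
x^n \ = \ \ove_1 x^{n-1} - \ove_2 x^{n-2} + \dots + (-1)^{M+1}\ove_M x^{n-M}.
\]
Each exponent $n-i$ with $1\le i\le M$ satisfies $n-M\le n-i<n$, and the lower bound $n-M\ge K-M\ge 0$ uses precisely the inequality $K\ge M$ built into the definition (\ref{eq_K}). By the inductive hypothesis each $x^{n-i}$ lies in the span of $\{1,x,\dots,x^{K-1}\}$, and hence so does $x^n$.

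The argument presents no genuine obstacle, being largely a matter of assembling what has already been established in the derivation of (\ref{eq_linear1})--(\ref{eq_linear3}). The one point requiring a touch of care is why the specific value $K=\max(N+1,M)$ is needed: the summand $N+1$ is what makes the recursion $\sum_{i}(-1)^i\ove_i\,\alpha_{n-i}=0$ valid (the polynomial $P(T)$ contributes nothing past degree $N$, so the recursion activates only once $n\ge N+1$), while the summand $M$ is what keeps the induction within nonnegative exponents. Together these two conditions guarantee both the existence of the defining relation $r_{M,N}$ in $A$ and the termination of the reduction procedure down to $\{1,x,\dots,x^{K-1}\}$.
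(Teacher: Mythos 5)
Your proposal is correct and takes essentially the same route as the paper: the paper derives (\ref{eq_linear1})--(\ref{eq_linear3}) and then simply states that the proposition ``follows,'' leaving implicit the two-step deduction you spell out (the relation $r_{M,N}=0$ in $A$ makes $\rho_\alpha$ factor through the quotient, and iterating (\ref{eq_linear1}) reduces every $x^n$ with $n\ge K$ to the span of $1,\dots,x^{K-1}$). Your remark explaining why $K=\max(N+1,M)$ is exactly the right value is a useful gloss, but it is the same mechanism the paper's derivation already encodes.
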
 

The  inclusion $R_{M,N}\subset R'_{M,N}$ makes $R'_{M,N}$ a free module over $R_{M,N}$ of rank $N!M!$. From this we can conclude that $A'$ is a free $A$-module of rank $N!M!$, with an isomorphism  
$A'\cong A\otimes_{R_{M,N}} R'_{M,N},$
and Proposition~\ref{eq_prop_surj} holds with $R_{M,N}$  replaced by $R'_{M,N}$ and $A$ replaced by $A'=\alpha'_{M,N}(\SS^1)$. 

When $K=M$, that is, $N<M,$ the monomial $x^{K-M}$  in (\ref{eq_linear3}) vanishes and $r_{M,N}$ becomes a monic degree $M$ polynomial in $x$ with coefficients - elementary symmetric functions in $\gamma_i$'s. Informally, one can think of it as a 'generic' monic  degree $M$ polynomial. 

\vspace{0.1in} 

Coefficients of the power series (\ref{eq_Z_rat_1}) are the so-called \emph{complete supersymmetric} functions 
\begin{equation} 
h_n(\gamma/\beta) = \sum_i h_{n-i}(\gamma)e_i(\beta) = \sum_{i=0}^{\min(n,N)} h_{n-i}(\gamma)e_i(\beta).
\end{equation} 
\emph{Supersymmetric} here is a bit of a misnomer. Rather, these  functions  and their generalizations are  characters of certain irreducible representations of the Lie superalgebra $\mathfrak{gl}(M|N)$. We refer the reader to references~\cite{Mo,MJ1} for an introduction to \emph{supersymmetric  Schur} functions  and  to~\cite[Introduction]{MJ2} for a quick summary of formulas for supersymmetric Schur functions as well as a brief history of this subject and references  to  the original papers, see also~\cite{HSP}. Supersymmetric Schur functions are also called \emph{hook Schur} functions~\cite{HSP}. Lascoux~\cite{L} calls them \emph{Schur functions in  difference  of alphabets}; see \"Ozt\"urk-Pragacz~\cite[Section 2]{OP} for another introduction and relation to singularity  theory. 

\vspace{0.07in} 

Consider the  Hankel matrix  $H_K(\gamma/\beta)$ of the  spanning set $\{1,x,\dots, x^{K-1}\}$ of $A(1)$. Its $(i,j)$ entry is $h_{i+j}(\gamma/\beta)$. Multiplying it by the matrix $J_K$ of the longest permutation results in the matrix  
with the $(i,j)$-term $h_{K-1-i+j}(\gamma/\beta).$ Recall the rectangular partition 
\begin{equation}
    \lambda_K = ((K-1)^K) =  (K-1,K-1,\dots,  K-1). 
\end{equation}
We see that the matrix $J_K H_K(\gamma/\beta)$ has the $(i,j)$-entry 
\begin{equation}
h_{K-1-i+j}(\gamma/\beta)= h_{\lambda_K^i-i+j}(\gamma/\beta).
\end{equation}
Consequently, it's the Jacobi-Trudi matrix for the partition $\lambda_K$. 

\begin{prop} 
\begin{equation} 
\det(H_K(\gamma/\beta)) = (-1)^{K(K-1)/2} s_{\lambda_K}(\gamma/\beta) .
\end{equation} 
\end{prop}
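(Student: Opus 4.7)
The plan is to mimic the strategy the author already used earlier in the section for the non-supersymmetric analogue (the Jacobi-Trudi identification in equation (\ref{eq_toeplitz_N})), and reduce the determinant of $H_K(\gamma/\beta)$ to a Jacobi-Trudi determinant via left multiplication by the antidiagonal permutation matrix $J_K$.

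First I would record that $\det(J_K) = (-1)^{K(K-1)/2}$, since $J_K$ implements the longest permutation in $S_K$, whose sign is $(-1)^{K(K-1)/2}$. By multiplicativity of the determinant,
\begin{equation*}
\det(H_K(\gamma/\beta)) \;=\; \det(J_K)^{-1}\,\det(J_K H_K(\gamma/\beta)) \;=\; (-1)^{K(K-1)/2}\,\det(J_K H_K(\gamma/\beta)).
\end{equation*}
So it suffices to evaluate $\det(J_K H_K(\gamma/\beta))$.

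Next I would quote the computation the author already carried out: the matrix $J_K H_K(\gamma/\beta)$ has $(i,j)$-entry $h_{K-1-i+j}(\gamma/\beta) = h_{\lambda_K^i - i + j}(\gamma/\beta)$, where $\lambda_K^i = K-1$ for all $i$. This is precisely the Jacobi-Trudi matrix associated to the rectangular partition $\lambda_K = ((K-1)^K)$, but with the complete homogeneous symmetric functions $h_n$ replaced by the complete supersymmetric functions $h_n(\gamma/\beta)$. The supersymmetric Jacobi-Trudi identity (see e.g. Macdonald's book cited as \cite{Mc}, or \cite{Mo,MJ1,MJ2,HSP}) states that for any partition $\lambda$ with at most $K$ parts,
\begin{equation*}
s_\lambda(\gamma/\beta) \;=\; \det\bigl( h_{\lambda_i - i + j}(\gamma/\beta) \bigr)_{1 \le i,j \le K},
\end{equation*}
so taking $\lambda = \lambda_K$ yields $\det(J_K H_K(\gamma/\beta)) = s_{\lambda_K}(\gamma/\beta)$.

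Combining the two displays gives the claimed formula $\det(H_K(\gamma/\beta)) = (-1)^{K(K-1)/2} s_{\lambda_K}(\gamma/\beta)$. The only non-routine ingredient is the validity of the Jacobi-Trudi formula in the supersymmetric setting, which I would simply cite rather than re-derive; everything else is bookkeeping about the sign of the longest permutation. In effect, the proposition is the supersymmetric twin of the statement proved earlier in the paper via (\ref{eq_toeplitz_N}), and the argument is identical after replacing $h_n$ by $h_n(\gamma/\beta)$.
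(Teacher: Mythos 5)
Your argument is exactly the paper's: left-multiply by $J_K$ to turn $H_K(\gamma/\beta)$ into the Jacobi--Trudi matrix for $\lambda_K=((K-1)^K)$ with $h_n$ replaced by $h_n(\gamma/\beta)$, invoke the supersymmetric Jacobi--Trudi formula (which the paper also treats as citable or definitional, cf.\ equation~(\ref{eq_ss_jt})), and pick up the sign $(-1)^{K(K-1)/2}=\det(J_K)$. The sign bookkeeping and the reduction to the Jacobi--Trudi determinant are both correct, so the proposal matches the paper's proof.
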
  
Proposition says that the determinant of $H_K(\gamma/\beta)$ is, up to a sign, the  supersymmetric  Schur function for the partition $\lambda_K$. 
The  Jacobi-Trudi formula for the supersymmetric Schur function can either be taken as the definition of the latter, or derived, if  the supersymmetric Schur function is defined as the character of the corresponding irreducible representation of $\mathfrak{gl}(N|M)$. The Jacobi-Trudi formula for the general supersymmetric Schur function is written down  in  Section~\ref{sec_SP_formula} below, see~(\ref{eq_ss_jt}).  

\vspace{0.1in} 

For the diagram $\lambda_K=((K-1)^K)$ with $K=\max(N+1,M)$ we distinguish two cases
\begin{enumerate}
    \item $N<M$, then $K=M$ and $\lambda_K=((M-1)^M)$.  
    \item $N\ge M$, then $K=N+1$ and $\lambda_K=(N^{N+1})$. 
\end{enumerate}
To understand the  supersymmetric Schur function $s_{\lambda_K}(\gamma/\beta)$ we compare partition $\lambda_K$ to the rectangular partition $(N^M)$, see Figure~\ref{fig_4_1}. In each of these two cases $\lambda_K$ contains the rectangle $M\times N$, with the complement -- itself a rectangular partition. 

\begin{figure}[h]
\begin{center}
\includegraphics[scale=1.0]{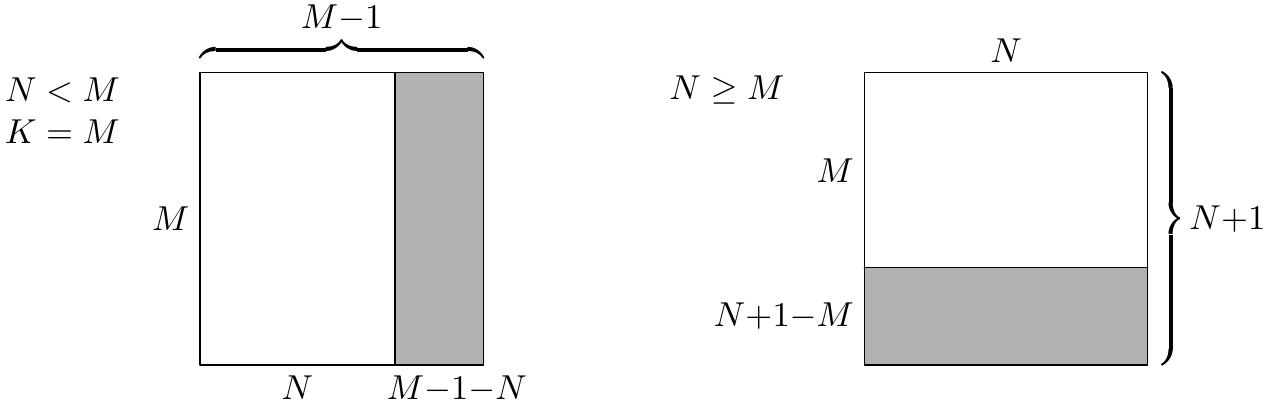}
\caption{Comparing rectangular partitions $\lambda_K$ and $(N^M)$.  Left: $N<M$ case, $\tau=((M-N-1)^M), \eta = \emptyset$. Right: $N\ge M$ case, $\tau=\emptyset, \eta=(N^{(N+1-M)})$. In both cases $(N^M)\subset \lambda_K$, with the complement shaded. For $\tau, \eta$ notation see Section~\ref{sec_SP_formula}.}
\label{fig_4_1}
\end{center}
\end{figure}

When $(M,N)$-supersymmetric partition $\lambda$ contains $M\times N$ rectangle, the supersymmetric  Schur function   simplifies to the product 
\[ s_{\lambda}(\gamma/\beta) = s_{\tau}(\gamma) s_{\eta'}(\beta)\cdot \prod_{i=1}^N\prod_{j=1}^M (\beta_i+\gamma_j)  
\] 
where $\tau$ and   $\eta$ are part of $\lambda$ to the right and down  of the $M\times N$-rectangle. 

Furthermore, since partitions $\eta'$ and  $\tau$  are, in some order, a rectangular  and the empty  partition, see Figure~\ref{fig_4_1}, and  the rectangular  partition is of  the  maximal height for that number of variables, we obtain the following simple  formulas for the supersymmetric Schur function that describes our determinant. 

Case (1): 
\begin{equation}
\label{eq_det_case1}
s_{\lambda}(\gamma/\beta) = (\gamma_1\dots\gamma_M)^{M-N-1}\cdot \prod_{i=1}^N\prod_{j=1}^M (\beta_i+\gamma_j) . 
\end{equation}

Case  (2): 
\begin{equation}
    \label{eq_det_case2}
 s_{\lambda}(\gamma/\beta) =  (\beta_1\dots\beta_N)^{N+1-M}\cdot \prod_{i=1}^N\prod_{j=1}^M (\beta_i+\gamma_j) .
\end{equation}

Over  $R_{M,N}$ these determinants are  not zero and, consequently, there are no linear relations on $1,x,\dots, x^{K-1}$. 
Together  with Proposition~\ref{eq_prop_surj} this implies the following result. 

\begin{theorem}\label{thm_2} 
  The state space of the circle $A(1) =\alpha_{M,N}(\SS^1)$ is a free $R_{M,N}$-module with a basis $\{1,x,\dots, x^{K-1}\}$, where $K = \max(N+1,M).$ Multiplication induced by the pants cobordism turns $A(1)$ into a commutative algebra 
  \begin{equation}\label{eq_monic_x}
      A\cong R_{M,N}[x]/((x^M - \ove_1 x^{M-1}+\dots +(-1)^M \ove_M)x^{K-M}), 
  \end{equation}
  where  $\ove_i=e_i(\gamma)$. 
\end{theorem}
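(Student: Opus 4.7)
The plan is to combine the surjection of Proposition~\ref{eq_prop_surj} with the Hankel determinant computation established in Case (1) and Case (2) just above. Set $B := R_{M,N}[x]/(r_{M,N})$, and write $\pi: B \twoheadrightarrow A$ for the surjective $R_{M,N}$-algebra homomorphism provided by Proposition~\ref{eq_prop_surj}. Since $r_{M,N}$, as given by (\ref{eq_r_N_M}), is a monic polynomial in $x$ of degree $K = \max(N+1,M)$ with coefficients in $R_{M,N}$, the quotient $B$ is a free $R_{M,N}$-module of rank $K$ with basis $\{1,x,\dots,x^{K-1}\}$. Hence, to prove the theorem it suffices to show that the images $\{1,x,\dots,x^{K-1}\}$ are $R_{M,N}$-linearly independent in $A$; then $\pi$ is a surjection of free modules of the same rank that sends a basis to an independent spanning set, so $\ker\pi=0$ and (\ref{eq_monic_x}) follows.

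For the linear independence, I would use the Gram matrix computed just above the theorem. The matrix of inner products of $\{1,x,\dots,x^{K-1}\}$ under $(,)_{\SS^1}$ is the truncated Hankel matrix $H_K(\gamma/\beta)$ with $(i,j)$-entry $h_{i+j}(\gamma/\beta)=\alpha_{i+j}$, and its determinant equals $(-1)^{K(K-1)/2} s_{\lambda_K}(\gamma/\beta)$ by the preceding proposition. Cases (1) and (2), culminating in formulas (\ref{eq_det_case1}) and (\ref{eq_det_case2}), express this Schur polynomial as a nonzero monomial in the $\gamma_j$ or $\beta_i$ times the product $\prod_{i,j}(\beta_i+\gamma_j)$, which is a nonzero element of the polynomial ring $R'_{M,N}$, hence a nonzero element of its subring $R_{M,N}=\sym_M(\gamma)\otimes\sym_N(\beta)$.

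Now suppose $\sum_{i=0}^{K-1} c_i x^i = 0$ in $A$, with $c_i\in R_{M,N}$. Pairing with $x^j$ for $j=0,\dots,K-1$ via $(,)_{\SS^1}$ yields $H_K(\gamma/\beta)\,\mathbf{c}=0$, where $\mathbf{c}=(c_0,\dots,c_{K-1})^T$. Multiplying by the classical adjugate gives $\det(H_K(\gamma/\beta))\,\mathbf{c}=0$. Since $R_{M,N}$ is an integral domain and $\det(H_K(\gamma/\beta))$ is nonzero in $R_{M,N}$ by the preceding paragraph, we conclude $\mathbf{c}=0$. This establishes the linear independence and completes the argument that $\pi$ is an isomorphism.

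The main, though mild, obstacle is to ensure that nonvanishing of the Gram determinant (which a priori only gives linear independence over the fraction field) actually implies linear independence over the integral domain $R_{M,N}$ itself; this is handled cleanly by the adjugate trick in the last step. Everything else is bookkeeping: Proposition~\ref{eq_prop_surj} supplies the surjection and identifies the quotient relation, while the explicit Schur-polynomial formulas (\ref{eq_det_case1}) and (\ref{eq_det_case2}) guarantee the nonvanishing of the Gram determinant in both ranges of $M,N$ that determine $K$.
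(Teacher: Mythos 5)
Your proof is correct and follows the same route as the paper: Proposition~\ref{eq_prop_surj} supplies the surjection from $R_{M,N}[x]/(r_{M,N})$, and the nonvanishing of the Gram determinant $\det H_K(\gamma/\beta)=\pm s_{\lambda_K}(\gamma/\beta)$, read off from (\ref{eq_det_case1}) and (\ref{eq_det_case2}), gives linear independence of $1,x,\dots,x^{K-1}$. The paper states the implication ``nonzero Gram determinant over the integral domain $R_{M,N}$ $\Rightarrow$ no linear relation'' without proof; your adjugate argument is the standard way to justify it.
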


Algebra $A(1)$ is commutative Frobenius over $R_{M,N}$, being  generated by a single element with the monic minimal polynomial over the ground  ring, see~(\ref{eq_monic_x}). 
Nethertheless, the natural trace that $A(1)$ inherits from the topological theory structure is not Frobenius, that is, it does not induce an isomorphism  $A(1)\stackrel{\cong}{\lra}A(1)^{\ast}$, only an injection.  This is  due to the Hankel determinants (\ref{eq_det_case1}) 
and (\ref{eq_det_case2}) not being invertible in $R_{M,N}$. The  same phenomenon appears in Section~\ref{subsec_constant} below, where $A(1)\cong R$, with $\{1\}$ as the basis,  but  $(1,1)=\beta\in  R$, so that the  trace map  $\epsilon(a)=(a,1)$ is unimodular iff  $\beta$  is invertible in $R$. 

\vspace{0.1in}

If  we  specialize  to  a field  $\kk$ via a  $\kk$-linear homomorphism 
\[
\psi:\kk[\gamma_1,\dots,\gamma_M,\beta_1,\dots, \beta_N]\lra \kk
\]
such that  
\begin{equation}\label{eq_psi_no_zero}
\psi(\gamma_j)\not= 0, \ \psi(\beta_i)\not=0,\  \psi(\beta_i+\gamma_j)\not=0, \ 1\le j \le M, \
1\le i \le N, 
\end{equation}
then the resulting  theory over a  field $\kk$ has  no linear relations  on $1,x,\dots, x^{K-1}$ either. This theory has the generating function 
\begin{equation}\label{eq_Z_rat_3} 
    Z_{\psi}(T)= \frac{(1+\psi(\beta_1)T)\dots (1+\psi(\beta_N) T)}{(1-\psi(\gamma_1) T)\dots (1-\psi(\gamma_M) T)} .
\end{equation}
The state space $A_{\psi}(1)$ of the circle in the induced theory is a $\kk$-vector space with a basis  $\{1,x,\dots, x^{K-1}\}$. Over a field the determinants (\ref{eq_det_case1}) and (\ref{eq_det_case2}) are invertible, subject to condition (\ref{eq_psi_no_zero}), and the usual 
trace  $\epsilon(a)=(a,1)$ turns  $A_{\psi}(1)$ into a commutative Frobenius algebra. 

\vspace{0.07in}

\emph{Remark:} The rational function in (\ref{eq_Z_rat_1}) satisfies  $Z(0)=1$. To avoid this restriction, one may change $P(T)$ to either  
\begin{itemize} 
\item  $\beta_0(1+\beta_1 T)\dots (1+\beta_N T)$, where $\beta_0$ is another  formal variable, or 
\item $(\beta'_1 +T)\dots (\beta'_N + T)$, where $\beta'_i$ are formal variables of degree $-2$, opposite  to  that of  $\beta_i$. 
\end{itemize}
The computations above would need to be modified. We leave the details  to the reader. Some special cases are considered below in relation to state spaces $A(k)$ of several circles. 

\vspace{0.1in} 


\subsection{One-circle state space for a  polynomial generating function.} \label{subsec_one_c} 
$\quad$

\vspace{0.07in} 

We now specialize  this setup to $M=0$, that is, $Z(T)$ being a polynomial,  but also change from terms  $1+\beta_i T$ to $T-\beta_i$.
Let the generating function $Z(T)$ be a generic monic polynomial $P(T)$, factoring into the product 
\[ Z(T) = \prod_{i=1}^N(T-\beta_i) = T^N-e_1(\beta)T^{N-1}+\dots + (-1)^N e_N(\beta). 
\] 
The Hankel determinant (or the Gram determinant) for the first $N+1$ vectors $1, x, \dots, x^N$ has the form 
\begin{equation} \label{eq_hankel_poly}
      H_{[0,N]} =  
  \left( {\begin{array}{ccccc}
   (-1)^N e_N & (-1)^{N-1}e_{N-1} & (-1)^{N-2}e_{N-2} & \dots & 1 \\
   (-1)^{N-1}e_{N-1} & (-1)^{N-2}e_{N-2} & \dots & \dots & 0 \\
   (-1)^{N-2}e_{N-2} & \dots & \dots & \dots &  0 \\
   \dots & \dots & \dots & \ddots & \dots \\
   1 & 0 & 0 & \dots & 0 \\
  \end{array} } \right),
\end{equation}
with $e_i = e_i(\beta)$. 
Every antidiagonal element is $1$, and there are only zeros below the antidiagonal, so 
that $\det H_{[0,N]} =(-1)^{\frac{N(N+1)}{2}}.$ Adding any additional element $x^i$ for $i>N$ to this set of vectors results in the matrix which has only zeros in the last row and column. In particular, it has the zero determinant and shows that $x^i=0$ in $A$  for $i\ge N+1$.  

Recall that we use two different ground rings, which are $R_{0,N}$ and $R'_{0,N}$ in  this case:   
\begin{equation}
    R'_{0,N}=\kk[\beta_1,\dots,\beta_N], \  \ 
    R_{0,N}= (R'_{0,N})^{S_N} = 
    \kk[e_1(\beta),\dots, e_N(\beta)],
\end{equation}
with $R_{0,N}$ the ring of symmetric functions in $N$ variables $\beta_1,\dots, \beta_N$. For these  theories we denote the state space of $\SS^1$ by $A$ and $A'$, correspondingly. 

\begin{corollary}  $A\cong R_{0,N}[x]/(x^{N+1})$ and 
$A'\cong R'_{0,N}[x]/(x^{N+1})$. The bilinear pairing on $A=\alpha_{0,N}(\SS^1)$ is unimodular and turns $A$ into a commutative Frobenius algebra over $R_{0,N}$, ditto for $A'$  and  $R'_{0,N}$. Algebra $A$ is a free module of rank $N+1$ over $R_{0,N}$, same for $A'$ and $R'_{0,N}$.   
\end{corollary}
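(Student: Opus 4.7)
The plan is to specialize the framework of Section~\ref{subsec_semiu} to $M=0$ and then promote the surjection of Proposition~\ref{eq_prop_surj} to an isomorphism by computing the Gram determinant of the candidate basis.

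First, I observe that with $M=0$ we have $K=\max(N+1,0)=N+1$, and the polynomial $r_{M,N}$ in (\ref{eq_r_N_M}) collapses: the factor in parentheses is the empty expression $(-1)^0 e_0(\gamma)=1$, so $r_{0,N}=x^{N+1}$. Proposition~\ref{eq_prop_surj} then supplies a surjective ring homomorphism $R_{0,N}[x]/(x^{N+1})\twoheadrightarrow A$ and shows that $\{1,x,\dots,x^N\}$ spans $A$ over $R_{0,N}$. To turn this into an isomorphism, I would show linear independence, which is equivalent to nondegeneracy of the pairing restricted to these vectors.

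Next, I would read off the Gram matrix in (\ref{eq_hankel_poly}): antidiagonal entries are $1$, entries below the antidiagonal vanish, and the matrix is upper anti-triangular. A direct expansion (or an explicit permutation argument) yields
\begin{equation}
\det H_{[0,N]} = (-1)^{N(N+1)/2},
\end{equation}
a unit in $R_{0,N}$. Thus no nontrivial $R_{0,N}$-linear combination of $\{1,x,\dots,x^N\}$ lies in the kernel of $(\,,\,)$, so the spanning set is a basis. Combined with the surjection above, this gives $A\cong R_{0,N}[x]/(x^{N+1})$ and shows $A$ is free of rank $N+1$. Because the Gram determinant is a unit, the adjoint map $\phi_{\SS^1}:A\to \Hom_{R_{0,N}}(A,R_{0,N})$ is an isomorphism, i.e.\ the pairing is unimodular. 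Together with associativity $(ab,c)=(a,bc)$, which is automatic from the topological origin of the pairing (gluing pants cobordisms), this exhibits $A$ as a commutative Frobenius algebra over $R_{0,N}$ with trace $\epsilon(a)=(a,1)$ induced by the cap.

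Finally, for the primed version I would invoke base change. As noted in the paragraph following Proposition~\ref{eq_prop_surj}, $R'_{0,N}$ is free of rank $N!$ over $R_{0,N}$, and the state space construction is compatible with this extension: $A'\cong A\otimes_{R_{0,N}} R'_{0,N}$. The presentation $A\cong R_{0,N}[x]/(x^{N+1})$ and the bilinear form base-change to give $A'\cong R'_{0,N}[x]/(x^{N+1})$, with the same Gram determinant $(-1)^{N(N+1)/2}$, still a unit in $R'_{0,N}$, so unimodularity and the Frobenius property transfer without change. There is no real obstacle here; the one nontrivial point is simply the Gram determinant computation, which is immediate from the anti-triangular shape of $H_{[0,N]}$.
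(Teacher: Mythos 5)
Your proof is correct and follows essentially the same route as the paper: the whole content is that $H_{[0,N]}$ is anti-triangular with units on the antidiagonal, giving $\det H_{[0,N]}=(-1)^{N(N+1)/2}$, while $\alpha_n=0$ for $n>N$ forces $x^i=0$ in $A$ for $i\ge N+1$. The only caveat is cosmetic: this subsection uses $Z(T)=\prod_i(T-\beta_i)$ rather than the $M=0$ case of (\ref{eq_Z_rat_1}), so Proposition~\ref{eq_prop_surj} does not literally apply as stated, but the relation it rests on ($\alpha_n=0$ for $n\ge N+1$, hence $x^{N+1}$ lies in the kernel of the pairing) holds verbatim for any degree-$N$ polynomial generating function, which is exactly how the paper argues.
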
    

If keeping track of the grading, we set $\deg(T)=\deg(\beta_i)=-2$, so that $Z(T)$ is homogeneous of  degree $-2N$. Degree of $\beta$ is the  opposite to that in Section~\ref{subsec_semiu}. 

\vspace{0.07in} 

Starting with any commutative ring $R$ and elements $\alpha_0,\dots, \alpha_{N-1}\in R$, we can do the universal construction for the generating function $Z(T)=T^N+\sum_{i=0}^{N-1} \alpha_i T^i$. Then the state space $A_{\alpha}\cong R[x]/(x^{N+1})$ and the trace $\epsilon(x^i)= \alpha_i,$ $i<N$, $\epsilon (x^N)=1$. 

\vspace{0.07in}

The Frobenius extension $R\subset A_{\alpha}$ can be obtained via base change from $R_{0,N}\subset A$, so that 
$A_{\alpha}= R\otimes_{R_{0,N}} A$:  

\[ 
\begin{CD}
R @>>> A_{\alpha} \\
@AAA    @AAA \\
R_{0,N}  @>>> A 
\end{CD}
\] 


\vspace{0.1in}
 
To  write down  the dual basis in $A$ to the monomial basis, consider the upper-triangular $(N+1)\times( N+1)$ matrix $U$, a relative of $H_{[0,N]}$ in equation (\ref{eq_hankel_poly}),  with ones on the main diagonal  and  $(i,j)$-entry $\alpha_{N+i-j}$ for $i<j$.  It's easy to write  down  its inverse matrix $U^{-1}$, and the dual basis to  $\{1,x,\dots, x^{N}\}$ relative  to  the trace map $\epsilon$ can be read off from $U^{-1}$, as the  coefficients of $U^{-1}(1,x,\dots, x^N)^T$. For instance, for $N=1$ matrices $U$ and $U^{-1}$ are  shown below 
\begin{equation}
    U= \begin{bmatrix} 1 & \alpha_0 \\
    0 & 1 \end{bmatrix}, \ \ 
    U^{-1} =\begin{bmatrix} 1 & -\alpha_0 \\
    0 & 1 \end{bmatrix} ,
\end{equation}
and
the dual basis to $\{1,x\}$ is $\{x,1-\alpha_0 x\}$. For $N=2$ 
the matrices are 
\begin{equation}
    U= \begin{bmatrix} 1 & \alpha_1 & \alpha_0 \\
    0 & 1 & \alpha_1  \\
    0  &   0 & 1 \end{bmatrix}, \ \ 
    U^{-1} = \begin{bmatrix} 1 & -\alpha_1 & \alpha_1^2-\alpha_0 \\
    0 & 1 & -\alpha_1  \\
    0  &   0 & 1 \end{bmatrix},
\end{equation}
and the dual basis to $\{1,x,x^2\}$ is  $\{x^2,x-\alpha_1x^2,1-\alpha_1 x  + (\alpha_1^2-\alpha_0)x^2 \}$. 

As we've already mentioned, non-degeneracy  of $\epsilon$ does  not guarantee that the associated topological theory is one-multiplicative, that is, admits  a neck-cutting relation. For example, consider for $N=1$ the difference of  the tube and the sum of basis elements and their duals $1\sqcup x  + x \sqcup  (1-\alpha_0 x)$, see Figure~\ref{fig_4_2}.  

\begin{figure}[h]
\begin{center}
\includegraphics[scale=1]{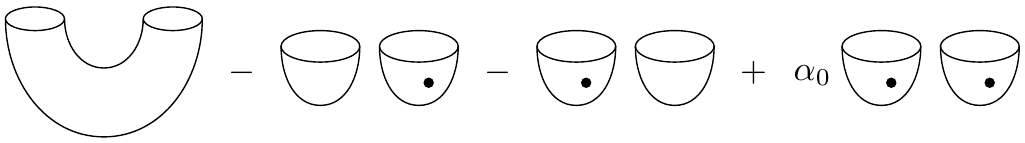}
\caption{A dot is used to denote a handle, see Figure~\ref{fig_3_7} below. This linear combination of cobordisms with  boundary $\SS^1\sqcup  \SS^1$  evaluates to  zero if closed by a cobordisms with different connected components at the two boundary circles.  Closing up by a tube evaluates  to 
$ 1 - 1 - 1 +0 = 1\not= 0$, since in this evaluation, with $Z(T)=\alpha_0+T$, torus  evaluates  to $1$, sphere to $\alpha_0$, and higher genus components  to $0$.}
\label{fig_4_2}
\end{center}
\end{figure}

This  difference
evaluates to zero in  $R_{0,2}$ if we cap off the two boundary circles by a disjoint union of two 2-manifolds, each  with one circle as  the  boundary, but not if  cap off the  boundary  circles by a tube  connecting them. 

\vspace{0.07in}

{\it Non-monic polynomial function:}
For a  minor modification of the above example, suppose that $\alpha_N\not=0$ and $\alpha_n=0$ for $n>N$, and that $R$ is an integral domain. This is still the case of a polynomial function 
\begin{equation} \label{eq_poly_non}
Z(T)=\alpha_N T^N + \alpha_{N-1}T^{N-1}+\dots + \alpha_0, 
\end{equation}
which is not necessarily monic. 
The Hankel matrix $H_{[0,N]}$ has zeros below the antidiagonal and $\alpha_N$ as each antidiagonal entry, so that 
\[ \det H_{[0,N]} = (-1)^{\frac{N(N+1)}{2}}\alpha_N^{N+1}.
\] 
 The determinant is non-zero and there are no $R$-linear relations on $\{1, x, \dots, x^N\}$, while $x^{N+1}$ is in the kernel of the bilinear form. Therefore, $A\cong R[x]/(x^{N+1})$, as  before. Notice that $A$ comes with a trace map $\epsilon$ that takes  values $\alpha_0, \dots, \alpha_{N-1}$ on $1, x, \dots, x^{N-1},$ and a nonzero value $\alpha_N$ on $x^N$. The trace map gives a perfect (unimodular) pairing on $A$ iff $\alpha_N$ is invertible in $R$. 
 If $\alpha_N$ is not invertible (but not zero), the  pairing is non-degenerate on $A$ but not perfect. 

\vspace{0.05in} 

In this example of a polynomial generating function, the parameters modify the trace only, while the algebra structure of $A(1)$ is fixed with $x^{N+1}=0$ and all lower powers of $x$ constituting a basis. 

\vspace{0.1in} 

Dropping the requirement that $R$ is an integral domain  may  result  in  state  spaces not being $R$-projective modules.  For  instance, if $N=0$ in the above non-monic example (\ref{eq_poly_non}), with $\alpha_0\not= 0$ and $\alpha_n=0$ for $n>0$, then $A$ is isomorphic to the subspace of $R$ which is the kernel of the multiplication by $\alpha_0$: 
\[ A \cong \ker(m), \  m: R\lra R, \ m(y) = \alpha_0 y. 
\] 
If $\alpha_0$ is not a zero divisor, $A\cong R$. If $\alpha_0$ is a zero divisor, $A$ may not a projective $R$-module. 

\vspace{0.1in}

\emph{Remark:} To have 1-multiplicativity, or the neck-cutting relation, when $A=A(1)$ is a free $R$-module of rank $n$ and the trace form is unimodular on $A(1)$, a necessary condition is that $\alpha_1=n$,  that  is, the two-torus evaluates to $n$. One  can  see this  by taking the potential neck-cutting formula  decomposing the tube as the sum  $\sum_{i=1}^n x_i\cup y_i$, over basis elements $\{x_1,\dots, x_n\}$ and dual basis elements $\{y_1,\dots, y_n\}$  and then capping off the  two boundary circles by a tube.  Terms  $x_i\cup y_i$ each  evaluate to $1$  and $x_i \cup y_j$ to $0$ for $i\not=j$, implying that the 2-torus must  evaluate  to $n$. 

\begin{figure}[h]
\begin{center}
\includegraphics[scale=1]{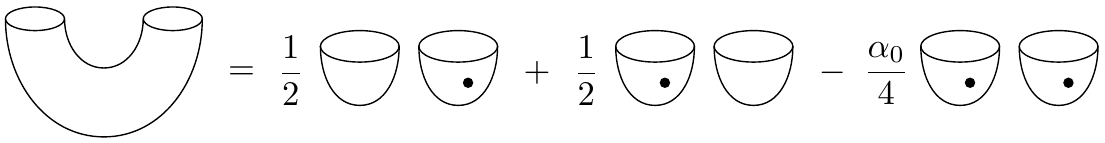}
\caption{Neck-cutting relation when $Z(T)=\alpha_0+2T$ and $2$ is invertible in $R$. Dot denotes a handle.}
\label{fig_4_3}
\end{center}
\end{figure}

For example, take $N=1$ in (\ref{eq_poly_non}) and further specialize to $Z(T)=\alpha_0 + 2T$. Assuming that $2$ is invertible in $R$, the basis $\{1,x\}$ of $A(1)$ has the dual basis 
$\{ x/2,(2-\alpha_0 x)/4\}$.
This theory is 1-multiplicative, the natural map $A(1)^{\otimes k}\lra A(k)$ is an isomorphism for all $k$, and the neck-cutting relation is shown in Figure~\ref{fig_4_3}. For a deformation of this example see  Section~\ref{subsub_rank_2}. 


\subsection{State spaces for unions of circles} \label{subset_union_circle}
$\quad$
\vspace{0.07in}

{\it Monoid $\Cobk$:} For an upper bound on the size of state spaces $A(k)$ of the theories for various  $\undalpha$ consider the commutative monoid $\Cobk$ of all oriented surfaces $S$ with boundary the union  $\sqcup_k \SS^1$ of $k$ circles such that every component of $S$ has nonempty boundary, under the usual multiplication via the $k$-pants cobordism. The unit element of $\Cobk$ consists of $k$ discs, visualized as $k$ cups with boundary  $\sqcup_k \SS^1.$

\vspace{0.1in} 

For small values of $k$ commutative monoid $\Cobk$ is given by 
\begin{itemize}
    \item $\Cobkk{0}= 1$, with the empty cobordism as the unique (and unit) element. 
    \item $\Cobkk{1}=\langle x \rangle$, the  monoid of non-negative powers of  $x$; generator $x$ is given by the 2-torus with one boundary component (one-holed 2-torus), earlier denoted $\SS^1_1$, also see Figure~\ref{fig_ell_cob3}.
    \item $\Cobkk{2}=\langle x_1,x_2,y\rangle /(y^2=x_1y,y^2=x_2y)$. Generators $x_1,x_2$ are given by the 1-holed 2-torus $\SS^1_1$ bounding the first, resp. second circle and the disk bounding the other circle. Generator $y$ is the tube cobordism $1_{tu}$ in Figure~\ref{fig_ell_cob2}.
\end{itemize}
Notice that generators $x_1,x_2$ of $\Cobkk{2}$ have two  connected components each, while $y$ has one. The square of $y=1_{tu}$ is the tube cobordism with a handle. That handle can be positioned near either of the two circles, giving the defining relations above. Figure~\ref{fig_ell_cob4} cobordisms are $x_1^2x_2,$ $y$, and $yx_1^2=yx_1x_2=yx_2^2=y^3$ left to right.

\begin{prop} \label{prop_monoid_rel}
For any $k$, commutative monoid $\Cobk$ has generators $x_i$, $1\le i\le k$, $y_{ij}$, 
$1\le i < j \le k$ and defining relations
\begin{eqnarray*}
    y_{i,j}^2 & = & y_{i,j}x_i = y_{i,j}x_j,  \\
    y_{i,j}y_{j,k} & = & y_{i,j}y_{i,k}  = y_{i,k}y_{j,k}, \ i<j<k.
\end{eqnarray*}
\end{prop}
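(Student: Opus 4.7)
The plan is to identify both $\Cobk$ and the abstract commutative monoid $M$ presented by the listed generators and relations with the same combinatorial data: pairs $(\pi,g)$, where $\pi$ is a partition of $\{1,\dots,k\}$ into non-empty blocks and $g\colon\pi\to\Z_{\ge 0}$ assigns a non-negative genus to each block. Topologically, the classification of compact oriented surfaces with boundary gives this parametrization of $\Cobk$: a connected component is determined by its genus and by the non-empty subset of $\{1,\dots,k\}$ whose circles form its boundary, and these subsets partition $\{1,\dots,k\}$ because every boundary circle belongs to exactly one component. I would first verify the five listed relations in $\Cobk$ by direct Euler-characteristic computation, using that a component of a pants composition built from $r$ constituent pieces of total genus $G$ glued by $p$ pants has genus $G-r+p+1$. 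This yields that both $y_{ij}^2$ and $y_{ij}x_i$ give a genus-$1$ piece bounding $\{i,j\}$ (other circles capped by disks), and that each of $y_{ij}y_{jk}$, $y_{ij}y_{ik}$, $y_{ik}y_{jk}$ gives a three-holed sphere bounding $\{i,j,k\}$.

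Surjectivity of the resulting map $\phi\colon M\to \Cobk$ is then straightforward: given $(\pi,g)$, pick a spanning tree $T_S$ of the complete graph on each block $S$ of size at least $2$ and a representative $i_S\in S$, and observe that
\[
\phi\Bigl(\,\prod_{S}\!\prod_{(i,j)\in T_S}\! y_{ij}\,\cdot\,\prod_{S} x_{i_S}^{g_S}\Bigr) = (\pi,g),
\]
since disjoint $y$-factors produce disjoint blocks, a tree product on $S$ realizes a genus-zero piece bounding exactly the circles in $S$, and each $x_i$ adds one handle to the block containing circle $i$.

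The main content is injectivity. Every element of $M$ has a representative $\prod y_{ij}^{b_{ij}}\prod x_i^{a_i}$, encoded by a multigraph on $\{1,\dots,k\}$ together with an $x$-count $a_i$ at each vertex. I would reduce any such representative to the canonical form $\prod_{S}\!\prod_{j\in S\setminus\{\min S\}}\! y_{\min S,\,j}\,\cdot\,\prod_{S} x_{\min S}^{g_S}$ by four rewriting moves: (i) apply $y_{ij}^2=y_{ij}x_i$ repeatedly to eliminate multiple edges; (ii) kill each remaining cycle $i_1 i_2\cdots i_l$ using $y_{i_1 i_2}y_{i_2 i_3}=y_{i_1 i_3}y_{i_2 i_3}$, which shortens the cycle by one and detaches $i_2$, until only a double edge remains and (i) absorbs it; (iii) on each resulting tree component, apply the same pivot relation to reshape it into the star at the minimum vertex; (iv) apply $y_{ij}x_i=y_{ij}x_j$ to move every $x$-dot onto the minimum vertex of its component (which is now directly joined to every other vertex of the block by a star edge). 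Two distinct canonical forms represent distinct elements because the pair $(\pi(w),g(w))$ given by the component partition of the $y$-multigraph of $w$ together with $g_S(w)=b_1(\text{component}_S)+\sum_{i\in S}a_i$ (first Betti number of the $y$-component plus total $x$-count on its vertices) is an invariant of $M$ --- each of the five relations preserves both the component partition and $b_1+\sum a_i$ on each block, a one-line check --- and it agrees with $\phi$ under the identification from the first paragraph.

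The main obstacle I anticipate is making step (ii) fully precise. Each cycle-shortening pivot requires the two specific $y$-factors to appear in the current word, and after the pivot the new edge $y_{i_1 i_3}$ may already be present, forcing an interleaving with step (i) to re-simplify multiple edges. One also has to check that iterating (ii)--(i) terminates (for instance, on the lexicographic pair (number of edges, sum of cycle lengths in a chosen cycle basis)), and that the invariant $(\pi,g)$ really determines a unique canonical word without any remaining freedom: the partition is read directly from the connected components, but distributing the block genus $g_S$ among $b_1$ and $\sum a_i$ admits many realizations, so the canonical form must collapse all of them. Both points are standard spanning-tree exchange arguments, but they deserve care so that the combinatorial rewriting in $M$ really mirrors the topological normal form in $\Cobk$.
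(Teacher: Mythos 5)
Your proposal is correct and fills in exactly the normal-form argument that the paper leaves to the reader (the surrounding text already records the target parametrization of $\Cobk$ by set-partitions of $\{1,\dots,k\}$ with a genus attached to each block, realized by the elements $y_{I_1}x_{\ell_1}^{n_1}\cdots y_{I_m}x_{\ell_m}^{n_m}$). The two points you flag do resolve as you expect: each pivot either shortens the chosen cycle or creates a double edge that step (i) absorbs with a strict drop in the number of $y$-letters, so the interleaved process terminates, and since your invariant $(\pi,\,b_1+\sum a_i)$ is preserved by all relations and evaluates to $(\pi,g)$ on the star-shaped canonical word, the canonical form is unique.
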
 
Here $x_i$ denotes the cobordism which is the union of $k-1$ disks and one-holed two-torus bounding the $i$-th circle out of $k$. Cobordism $y_{ij}$ consists of $k-2$ disks and  a tube, the later connecting $i$-th and $j$-th circles.  

\emph{Proof} is left to the reader. We did not list commutativity relations on the generators. 
$\square$

\vspace{0.07in} 

More  generally, for a subset $I=(i_1,\dots,i_m)$ define
\[ y_I =y_{i_1,i_2}y_{i_2,i_3}\dots y_{i_{m-1}i_m}.
\] 
This element is an $m$-punctured  2-sphere  bounding $m$ boundary circles $i_1,i_2,\dots, i_m$ together with $k-m$ disks bounding the remaining circles, see examples in Figure~\ref{fig_3_4}. 



Multiplying $y_I$ by $x_i^g$ for any $i\in  I$ gives a genus $g$ surface bounding the same $m$ circles, together with the disks  for the other   circles. 
Elements of $\Cobk$ are parametrized by decompositions of the $k$-element set into non-empty subsets
\[ \{1,2,\dots, k\}=I_1\sqcup I_2\sqcup \dots \sqcup I_m 
\]
together with a choice of  a non-negative number $n_j$, $1\le j\le m$  for each subset (genus of  the corresponding component). The corresponding element of the monoid is 
\[ y_{I_1}x_{\ell_1}^{n_1} y_{I_2}x_{\ell_2}^{n_2}\dots 
y_{I_m}x_{\ell_m}^{n_m},
\] 
where  $\ell_j$ is any element of $I_j$. 

For instance, for $k=2$ there are two decompositions and corresponding elements: 
\begin{itemize}
    \item $\{1,2\}= \{1\}\sqcup  \{2\}$, with elements $x_1^{\ell_1}x_2^{\ell_2}$, $\ell_1,\ell_2\ge 0$, 
    \item $\{1,2\}=\{1,2\}$, with elements $x_1^{\ell}y_{12}=x_2^{\ell}y_{12}=y_{12}^{\ell+1}$, $\ell\ge 0$.
\end{itemize}

\vspace{0.07in}

Define $R$-algebra $A_k$ as the monoid algebra $R\mathrm{Cob}_{2,k}$ with coefficients in $R$,
\begin{equation}\label{eq_def_Ak}
    A_k = R\mathrm{Cob}_{2,k}.
\end{equation}

As a free $R$-module, $A_k$ has a basis given by cobordisms $S$ with boundary $\sqcup_k \SS^1$ such that each connected  component of $S$ has nonempty boundary. Multiplication in $A_k$ is given by putting two cobordisms in parallel and merging $2k$ boundary circles into $k$ circles via $k$ pants cobordisms.

Repeating our earlier monoid examples, for small values of $k$ the algebra $A_k$ is given by 
\begin{itemize}
    \item $A_0= R$, with the empty cobordism as the unique basis element. 
    \item $A_1=R[x],$ the generator $x$ given by the 2-torus with one boundary component, earlier denoted $\SS^1_1$.
    \item $A_2\cong R[x_1,x_2,y]/(y^2-x_1y,y^2-x_2y)$. Generators $x_1,x_2$ are given by the 2-torus $\SS^1_1$ bounding the first, resp. second circle and the disk bounding the other circle. Generator $y$ is the tube cobordism $1_{tu}$. 
\end{itemize}

\begin{prop} \label{prop_rel_Ak} 
Commutative $R$-algebra $A_k$ has generators $x_i$, $1\le i\le k$, $y_{i,j}$, $1\le i< j\le k$ and defining relations 
\begin{eqnarray*}
    y_{i,j}^2 & = & y_{i,j}x_i = y_{i,j}x_j,  \\
    y_{i,j}y_{j,k} & = & y_{i,j}y_{i,k}  = y_{i,k}y_{j,k}, \ i<j<k.
\end{eqnarray*}
\end{prop}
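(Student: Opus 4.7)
My plan is to reduce Proposition~\ref{prop_rel_Ak} to the monoid-theoretic Proposition~\ref{prop_monoid_rel} stated just above. By definition $A_k = R\Cobk$ is the monoid $R$-algebra of $\Cobk$, and any presentation of a commutative monoid by generators and relations lifts formally to a presentation of its monoid algebra as a commutative $R$-algebra with the same generators and relations, since both ``free commutative monoid on a set'' and ``monoid algebra'' are left adjoints and therefore preserve colimits. As Propositions~\ref{prop_monoid_rel} and~\ref{prop_rel_Ak} list exactly the same generators and exactly the same relations, the algebraic statement is immediate once the monoid statement is in hand.

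Since the paper leaves the monoid proposition to the reader, I still need to sketch it. First I would verify the two families of relations geometrically in $\Cobk$. For $y_{i,j}^2 = y_{i,j} x_i$, the product $y_{i,j}^2$ is obtained by placing two parallel tubes between circles $i$ and $j$ (with disks for the remaining $k-2$ circles) and gluing through the $k$ pants that merge pairs of boundary circles; the resulting surface is a genus-one piece bounding $\{i,j\}$ together with disks elsewhere, which is precisely $y_{i,j} x_i$, and equally $y_{i,j} x_j$, since the extra handle may be slid to either side of the tube. For $y_{i,j} y_{j,k} = y_{i,j} y_{i,k} = y_{i,k} y_{j,k}$ with $i<j<k$, each product is, up to the disk factors, a thrice-punctured sphere bounding $\{i,j,k\}$, independent of which pair of tubes is used to build it.

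Next I would establish the normal form sketched in the excerpt: every element of $\Cobk$ corresponds to a partition $\{1,\ldots,k\} = I_1\sqcup\cdots\sqcup I_m$ and genera $n_1,\ldots,n_m\ge 0$, and in the abstract monoid presented by the $x_i$ and $y_{i,j}$ modulo the listed relations the associated element is written as $\prod_j y_{I_j} x_{\ell_j}^{n_j}$, where $y_{I_j} = y_{i_1,i_2} y_{i_2,i_3}\cdots y_{i_{s-1},i_s}$ for some enumeration $I_j = \{i_1,\ldots,i_s\}$ and any $\ell_j \in I_j$. The first family of relations shows the choice of $\ell_j$ is irrelevant, and the triple relation allows one to swap adjacent letters in an enumeration, so $y_{I_j}$ is independent of the enumeration. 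Conversely, any word in the generators can be consolidated component by component into this form using the same relations.

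The main obstacle is this well-definedness of $y_I$: showing that $y_{i_1,i_2}y_{i_2,i_3}\cdots y_{i_{s-1},i_s}$ depends only on the underlying set $I$ and not on the chosen enumeration. I would handle this by a combinatorial rewriting argument. Interpret the product as a choice of spanning path on $I$; the triple relation exactly identifies the products corresponding to the three spanning trees on a $3$-vertex subgraph, and any two spanning trees (in particular paths) on a finite vertex set $I$ are connected by a finite sequence of such local moves. Changing the base point $\ell_j$ is then handled by the first relation propagated along any path in $I$. Once these combinatorial lemmas are in place, the monoid presentation follows, and the algebra statement is then automatic by the adjunction observation above.
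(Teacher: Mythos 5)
Your reduction of the algebra presentation to Proposition~\ref{prop_monoid_rel} via the monoid-algebra functor is exactly the paper's argument; the paper's entire proof is the one-line observation that the proposition ``follows at once'' from the monoid presentation, with the monoid statement itself left to the reader. Your additional sketch of that monoid proof correctly verifies the relations geometrically, sets up the normal form indexed by set partitions with genera, and isolates the crux as well-definedness of $y_I$ via edge-slide connectivity of spanning trees of $K_I$ --- this last claim is true and elementary (e.g.\ slide any tree into the star at $\min I$ and then unfold the star into the increasing path) but is asserted rather than proved, so a careful write-up should include that small induction.
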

This follows at once from Proposition~\ref{prop_monoid_rel}. 
Subalgebra inclusion $R[x_1,\dots, x_k]\subset A_k$ is split, via the two-sided ideal $J=(y_{i,j})_{i<j}$ in $A_k$, with $A_k\cong R[x_1,\dots, x_k]\oplus J$.

\vspace{0.1in} 

{\it Grading:} 
Give a surface $S$ with boundary the union of $k$ circles and no closed components degree  
\begin{equation} \label{eq_degree_cob} \deg(S)=k-\chi(S).
\end{equation} 
This degree is a non-negative even integer for each $S$ and equips monoid  
$\mathrm{Cob}_{2,k}$ with a  $2\Z_+$ grading. In particular,
\begin{equation} \label{eq_degree_cob_2} \deg(1)=0,   \ \deg(x_i)=\deg(y_{ij}) = 2. 
\end{equation}
If $R$ is $\Z$-graded, resp. $\Z_+$-graded, then algebra $A_k$ is $\Z$-graded, resp. $\Z_+$-graded as well. 

To make gradings of $\mathrm{Cob}_{2,k}$ and $R$  compatible (if $R$ is graded), we want $\deg(\alpha_g)=2g-2$, for $g\ge 0$. Then the degree formula  (\ref{eq_degree_cob}) 
extends to cobordisms $S$ with closed components, when these cobordisms are viewed as elements of $A_k$. With these assumptions, degree  of the 2-sphere is $-2$, of  the 2-torus is zero, while all higher genus components have positive degree. In this sense, 'most' generators of $A_k$ have positive degree (generators $x_i$, $y_{ij}$ and $\alpha_g$ for $g>1$, when we consider $R$ as generated by $\alpha$'s over some smaller commutative base ring), with the exception of $\alpha_0$ and  $\alpha_1$,  of degrees $-2$ and $0$, respectively. 

\vspace{0.1in} 

{\it Multiplications and surjections:} There are algebra homomorphisms 
\begin{equation} \label{eq_algA_homs}
A_{k_1}\otimes_R A_{k_2}\lra A_{k_1+k_2} 
\end{equation}
given by putting cobordisms in  parallel. These homomorphisms are inclusions but not isomorphisms for $k_1,k_2>0$, missing cobordisms with a connected component having a boundary circle among the first $k_1$ and the last $k_2$ circles. For instance, $y_{k_1,k_1+1}$ is not in the image of the homomorphism. 

For any $\undalpha$ as in Section~\ref{subset_state_circle},
commutative algebra $A(k)$ generated by cobordisms with boundary the union of $k$ circles modulo the relations given by $\undalpha$ is naturally a quotient of $A_k$, via 
the homomorphism 
\begin{equation} \label{eq_As_map}
    A_k \lra A(k)
\end{equation}
sending any cobordism $S$ in $\Cobk$ to itself, viewed as an element of $A(k)$, and extending $R$-linearly. 
These surjections intertwine multiplications (\ref{eq_algA_homs})  with the corresponding maps (\ref{eq_hom_parallel}) for $A(k)$'s:

\[ 
\begin{CD}
A_{k_1}\otimes_R A_{k_2} @>>> A_{k_1+k_2}  \\
@VVV    @VVV \\
A(k_1)\otimes_R A(k_2)  @>>> A(k_1+k_2) .
\end{CD}
\] 
\vspace{0.1in}

{\it A finite-dimensional non-multiplicative example.}
Analogously to the universal setup above, 
homomorphisms (\ref{eq_hom_parallel}) may not be isomorphisms even when $A(k)$ have finite rank over $R$.
Let us give such an example with $R$ an integral domain and the generating function 
\begin{equation} \label{eq_linear_fun}
Z(T)=\beta_0+\beta_1 T,
\end{equation}
where $\beta_0,\beta_1\in R$, $\beta_1\not=0$. 

The ring $A(1)\cong  R[x]/(x^2)$, where $x=\SS^1_1$ is the one-holed 2-torus. Indeed, the Gram matrix of $(1,x)$ has determinant $-\beta_1^2$ while  $x^2=0$ in this  theory.

\begin{figure}[h]
\begin{center}
\includegraphics[scale=1.0]{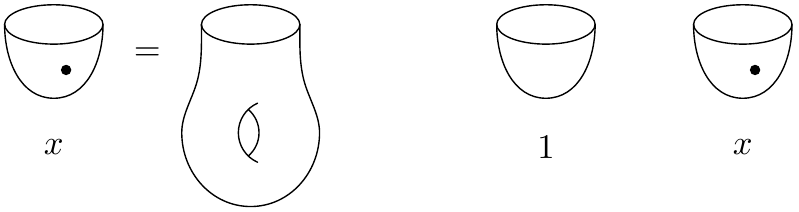}
\caption{Left: dot notation for a handle on a surface; right: basis  $\{1,x\}$ of $A(1)$.}
\label{fig_3_7}
\end{center}
\end{figure}

To simplify diagrams, we use a dot to denote a handle added to a surface, see Figure~\ref{fig_3_7} left. 
A basis of $A=A(1)$ is given by a cup and a dotted cup, see Figure~\ref{fig_3_7} right. In this theory, two dots on the same connected component evaluate to zero, since any closed connected surface of genus two or higher evaluates to zero. Diagrams in Figure~\ref{fig_3_8} constitute a spanning set of $A(2)=\alpha(\SS^1\sqcup \SS^1)$. They  can  be  written as 
\[ 1, \ x_1, \ x_2, \ x_1x_2,  \ y, \ x_1 y, 
\] 
with the understanding that we  mean the image of the  corresponding element  of $A_2$ under the  homomorphism (\ref{eq_As_map}). 

\begin{figure}[h]
\begin{center}
\includegraphics[scale=1.0]{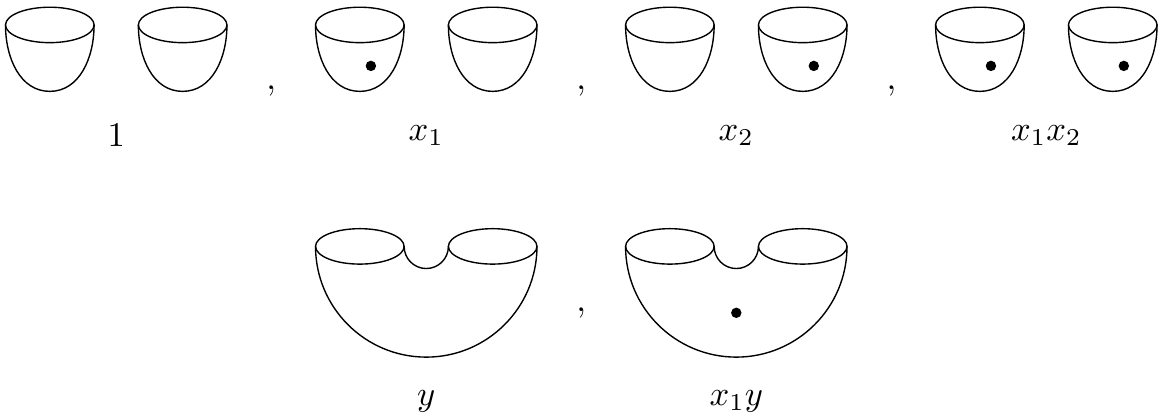}
\caption{Spanning  set for $A(2)$.}
\label{fig_3_8}
\end{center}
\end{figure}

The following is the Gram matrix for this  spanning set: 
\begin{equation}
    G =\begin{bmatrix} 
     \beta_0^2 & \beta_0\beta_1 & \beta_0\beta_1 & \beta_1^2 & \beta_0 & \beta_1 \\
     \beta_0\beta_1 & 0     & \beta_1^2 & 0 & \beta_1 & 0\\
     \beta_0\beta_1 & \beta_1^2 & 0     & 0 & \beta_1 & 0 \\
     \beta_1^2 & 0 & 0 & 0     & 0 & 0 \\
     \beta_0   & \beta_1     & \beta_1     & 0     & \beta_1 & 0 \\
     \beta_1   & 0     & 0     & 0     & 0 & 0 
     \end{bmatrix}  . 
\end{equation}
We see that 
\begin{equation} \label{eq_xx_beta}
x_1 x_2 - \beta_1 x_1 y = 0 
\end{equation} 
in $A(2)$, since the fourth column is $\beta_1$ times the last column. Removing  $x_1x_2$ from the above list of six vectors and downsizing to $5\times 5$ matrix by removing  the fourth row and column yields determinant $\beta_1^6(\beta_1-2).$ In particular, if $\beta_1-2$ is  not a zero divisor, the set $\{1,x_1,x_2,y,x_1y\}$ is a basis of $A(2)$, and the latter is a free $R$-module of rank five. 

If $\beta_1=2$, relations
\[ x_1 x_2 - 2 x_1 y = 0, \ x_1+x_2-2y -\beta_0 x_1 y = 0 
\] 
hold. The first relation is  specialization of (\ref{eq_xx_beta}) for $\beta_1=2$. The second relation, in fact, implies the  first  relation. In this case $A(2)$ is a free $R$-module of rank four with a basis, for instance, $\{ 1, x_1, y, x_1 y\}$; this  is the theory considered at the end of Section~\ref{subsec_one_c}, with parameter $\alpha_0$ there relabeled into $\beta_0$ here. 

\vspace{0.1in}

It's an interesting question to find bases in $A(k)$ for this theory and for more general theories with the polynomial and  rational generating functions. In Section~\ref{subsec_constant} below we discuss but don't fully work out the case of the constant generating function. 

\vspace{0.08in}

{\it Finite generation of $A(k)$:} As a first step in the  classification of theories for various $\undalpha$'s, we can separate them into those with finitely- versus infinitely-generated $R$-module $A(1)$. What does being in the first class entail for $A(k)$ for $k>1$? 

\begin{prop} Let  $R$ be any  commutative ring. Assume that $A(1)$ is a finitely-generated $R$-module. Then $A(k)$ is finitely-generated as well, for all $k>1$. 
\end{prop}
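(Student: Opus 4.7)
\emph{Proof plan:} The plan is to convert the finite generation of $A(1)$ into a polynomial relation on a single generator, lift this relation to $A(k)$, and then use Proposition~\ref{prop_rel_Ak} to exhibit a finite spanning set. Since $A(1)$ is $R$-spanned by $\{x^n\}_{n\ge 0}$, any finite generating set already lies in the submodule spanned by finitely many of these powers, so there is some $N$ with $A(1)=R\langle 1,x,\dots,x^{N-1}\rangle$, and in particular a relation
\[
x^N \ = \ \sum_{i=0}^{N-1} c_i\, x^i, \qquad c_i\in R,
\]
holds in $A(1)$.

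The main step is lifting this relation to every $A(k)$. Write $x_j\in A(k)$ for the class of the cobordism with a one-holed torus on the $j$-th circle and disks on the remaining $k-1$ circles, so that $x_j^n$ is the class of the cobordism carrying $\SS^1_n$ on circle $j$ and disks elsewhere. For any cobordism $M$ with $\partial M=\sqcup_k\SS^1$, capping off all circles of $M$ except the $j$-th by disks produces a cobordism $M_j$ with one boundary circle, and the definitions of the pairings give
\[
(M,\, x_j^n)_{A(k)} \ = \ \alpha\bigl((-M)\cup_{\sqcup_k\SS^1}(\SS^1_n\sqcup(k-1)\text{ disks})\bigr) \ = \ (M_j,\, x^n)_{A(1)}.
\]
Hence $(M,\, x_j^N-\sum_i c_i x_j^i)_{A(k)}=(M_j,\, x^N-\sum_i c_i x^i)_{A(1)}=0$ for every $M$, and since $A(k)$ is the quotient of $\Free(\sqcup_k\SS^1)$ by the kernel of its bilinear form, this forces $x_j^N=\sum_{i=0}^{N-1}c_i x_j^i$ in $A(k)$ for each $j$. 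This is the only place where the hypothesis is used in an essential way, and it works precisely because the universal construction realises $A(k)$ as the largest quotient on which the pairing with test cobordisms is faithful.

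Finally, by the surjection $A_k\to A(k)$ of (\ref{eq_As_map}) and the classification of basis elements of $A_k$ discussed after Proposition~\ref{prop_monoid_rel}, $A(k)$ is $R$-spanned by the images of the monomials
\[
y_{I_1}\,x_{\ell_1}^{n_1}\cdots y_{I_m}\,x_{\ell_m}^{n_m}
\]
indexed by set partitions $\{1,\dots,k\}=I_1\sqcup\cdots\sqcup I_m$ with $\ell_j\in I_j$ and non-negative integers $n_j$. Using the lifted relation $x_{\ell_j}^N=\sum_i c_i x_{\ell_j}^i$, we rewrite $x_{\ell_j}^{n_j}=x_{\ell_j}^{n_j-N}\cdot x_{\ell_j}^N$ whenever $n_j\ge N$ to strictly decrease the largest exponent appearing, and iterate until $0\le n_j<N$ for all $j$. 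Since there are only finitely many set partitions of $\{1,\dots,k\}$ and at most $N$ choices of genus per block, the resulting spanning set is finite and $A(k)$ is finitely generated over $R$.
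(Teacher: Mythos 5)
Your proof is correct and follows essentially the same strategy as the paper's: bound the genus of each connected component using the finite generation of $A(1)$, and combine this with the finiteness of set partitions of $\{1,\dots,k\}$ underlying the monoid $\Cobk$. The only presentational difference is that you first extract a monic relation $x^N=\sum_{i<N}c_ix^i$ and transport it to $A(k)$ via the bilinear pairing (a nice explicit way to justify the lift), whereas the paper works directly with a finite generating set $u_1,\dots,u_m$ and reduces each component of a cobordism inside its own state space $A(j)$ using the $A(1)$-action.
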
 

\begin{proof} Choose finitely many  generators $u_1,\dots, u_m$ of $R$-module $A(1)$. Any power of $x\in  A(1)$ can be written as a linear combination of these generators. Interpreting this topologically, a one-holed surface of any genus reduces to a linear combination of $u_i$'s in $A(1)$.

$R$-module $A(k)$ is spanned by  cobordisms $S$ in $\Cobk$. Any component $S'$ of $S$ has some genus and  some number $j$ of boundary components. It can be reduced in $A(j)$ to a linear combination of the genus zero cobordism $S_0'$ with the same set of boundary components as $S'$ and $u_i$'s written on the cobordism. By writing $u_i$ on the  cobordism $S_0'$ we mean multiplying with $u_i$. Since $S_0'$ is connected, it's not important at which circle the multiplication happened. 

Equivalently, if $u=\sum_{\ell} a_{\ell} x^{\ell}$, multiplication of a connected cobordism $L$ by $u$ equals the 
sum 
$\sum_{\ell} a_{\ell} L_{\ell}$, where by  $L_{\ell}$ we mean $L$ with $\ell$ handles attached at its unique connected component. To interpret  this  operation as multiplication in $A(j)$ we need $L$ to have at least one boundary component, so $j>0$. 
For $S\in \Cobk$, viewed as an element of $A(k)$, any connected component reduces to a linear combination of the corresponding genus zero surface with one of the $u_{\ell}$'s floating  on it. This gives us a spanning set that runs over all decompositions $I$ of $\{1,2,\dots, k\}$. For a decomposition $I$ with $t$ parts, there are $m^{t}$ choices for putting $u_i$'s on connected components. Consequently, there's a finite spanning set in $A(k)$ parametrized by such decompositions and choices of labels $u_i$ for each connected component in the decomposition. 
\end{proof}

For example, with assumptions and notations as in the above proof, $R$-module $A(2)$ admits a set of $m^2+m$ generators, with $m^2$ generators corresponding to disconnected cobordisms and $m$ generators for the cobordism $y=1_{tu}$ with a label $u_i$ floating on it. 

\vspace{0.1in}


\subsection{Example of the constant generating function} \label{subsec_constant}

$\quad$ 

\vspace{0.07in} 

Consider the generating function which is just a constant,  
\begin{equation}\label{eq_Z_const} 
Z(T)=\beta,
\end{equation} 
for $\beta\in R$. Let's assume that $R$ is an integral domain and $\beta\not= 0$. The theory evaluates a 2-sphere to $\beta$ and all higher genus surfaces to zero, 
see Figure~\ref{fig_3_1}. 

\begin{figure}[h]
\begin{center}
\includegraphics[scale=1.0]{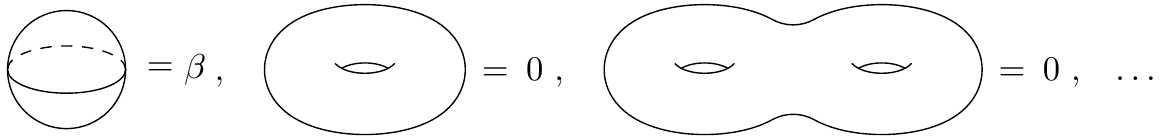}
\caption{Values of $\alpha$ on closed connected cobordisms for the constant generating function $Z(T)=\beta$.}
\label{fig_3_1}
\end{center}
\end{figure}

The  space $A(1)\cong R[\SS^1_0]$ is generated by the disk cobordism $\SS^1_0$, but the bilinear form on it is unimodular only if $\beta$ is invertible in $R$. 

A closed surface that  contains a handle will evaluate to zero. 
Consequently, space $A(k)$ is spanned by cobordisms into  $k$ circles 
with all components  of zero  genus, that is, 2-spheres with holes. 
Homeomorphism classes of such cobordisms may be parametrized by decompositions of $k$; denote this set by $D(k)$. 

\vspace{0.07in} 

Take the algebra $A_k$ 
and mod out by the ideal generated by cobordisms with a handle. Denote the resulting algebra by $A_{0,k}$. It's a free $R$-module with a basis given by decompositions of $k$. A basis element $y_{\lambda}$ corresponds to $\lambda\in D(k)$ being a partition of the set $\{1,\dots, k\}$ into a disjoint union of non-empty subsets. 
The unit element of $A_{0,k}$ is in the basis, as the decomposition $\{\{1\},\{2\},\dots, \{k\}\}$ into one-element sets. 

\vspace{0.07in} 

We can identify $A_{0,k}$ with the quotient of the algebra $A_k$ in Section~\ref{subset_union_circle} of all cobordisms with that boundary by the ideal $(x_1,\dots, x_k)$. 
We can write basis elements of $A_{0,k}$ as products $y_{J_1}\dots y_{J_m}$ over all decompositions $J_1\sqcup  J_2\sqcup \dots \sqcup J_m$ of $\{1,\dots, k\}$. One-element subsets $\{\ell\}$ can be dropped from the product, since $y_{\{\ell\}}=1$. 

\vspace{0.1in} 

It's convenient to assume that $R$ is graded as well, with $\deg(\beta)=-2$. This extends the convention on the degree of cobordisms in $A_k$ to closed cobordisms. Note that $A_{0,k}$ is $\Z_+$-graded (if tacitly assuming that $R$ lives entirely in degree $0$),  but with $\deg(\beta)=-2$ the ring $A_{0,k}$ has both positive and negative terms. 

\vspace{0.1in} 

Algebra $A_{0,k}$ surjects onto the  algebra $A(k)$ for our theory with the  generating  function (\ref{eq_Z_const}). Basis of $A_{0,k}$ produces a spanning set of $A(k)$. 
The spanning sets of $A(2)$ and $A(3)$ are shown in Figures~\ref{fig_3_2} and~\ref{fig_3_3}, respectively. 

\begin{figure}[h]
\begin{center}
\includegraphics[scale=1.0]{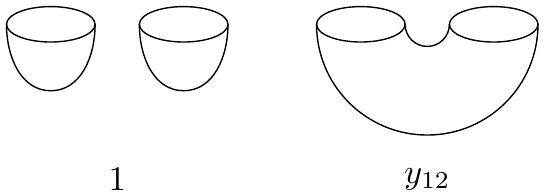}
\caption{Basis of $A(2)$: unit cobordism $1$ and tube cobordism $y_{12}=1_{tu}$.}
\label{fig_3_2}
\end{center}
\end{figure}

\begin{figure}[h]
\begin{center}
\includegraphics[scale=1.0]{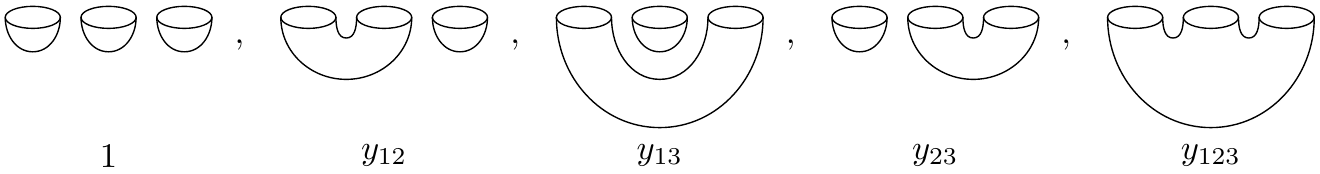}
\caption{A spanning set of $A(3)$: the unit elements, tubes $y_{ij}$, and connected cobordism $y_{123}=y_{12}y_{13}$.}
\label{fig_3_3}
\end{center}
\end{figure}

Gram matrices for these spanning sets are 
\begin{equation}
    G_2= \begin{bmatrix} \beta^2 & \beta \\
    \beta & 0 \end{bmatrix}, \ \ 
    G_3 =\begin{bmatrix} 
     \beta^3 & \beta^2 & \beta^2 & \beta^2 & \beta \\
     \beta^2 & 0     & \beta & \beta & 0 \\
     \beta^2 & \beta & 0     & \beta & 0 \\
     \beta^2 & \beta & \beta & 0     & 0 \\
     \beta   & 0     & 0     & 0     & 0 
     \end{bmatrix}  . 
\end{equation}
Determinant of the  first matrix is $-\beta^2$. Since $\beta\not=0$ and $R$ is an integral domain, $A(2)$ is a free $R$-module with basis $\{1,y_{12}\}$. 

The second matrix has determinant $2\beta^5$. 
\begin{itemize}
\item If $2\not=0$ in $R$, the five-element set in Figure~\ref{fig_3_3} is a basis of the free module $A(3)$. 
\item 
If $2=0$ in $R$, there is a linear relation
\begin{equation}
y_{12}+y_{13}+y_{23}+\beta y_{123}=0
\end{equation}
in $A(3)$, which can be thought of as a skein relation on genus zero surfaces with three boundary components  in this theory. Then $A(3)$ is a free $R$-module with a basis, for instance, $\{1,y_{12},y_{13},y_{123}\}.$ 
\end{itemize}

If $2=0$ in $R$ and $\beta$ is invertible, we can instead write
\begin{equation}
y_{123}=\beta^{-1}(y_{12}+y_{13}+y_{23}) 
\end{equation}
so that a sphere with 3 holes simplifies to a linear combination of unions of a tube and a disk. This allows to simplify any  $y_J$ with $J$ of cardinality three or higher. In this case ($2=0$ and invertible $\beta$) space $A(k)$ is spanned by products $y_{i_1,j_1}y_{i_2,j_2}\dots y_{i_m,j_m}$ such that $i_{\ell}<j_{\ell}$, $\ell =1,\dots, m$, sequence  $(i_1,\dots, i_m)$ is strictly increasing, and the $2m$ indices are all distinct. 
Diagrammatically, the spanning set consists of diagrams of disks and tubes, that is, no component containing more than two boundary circles. 

\begin{figure}[h]
\begin{center}
\includegraphics[scale=1.0]{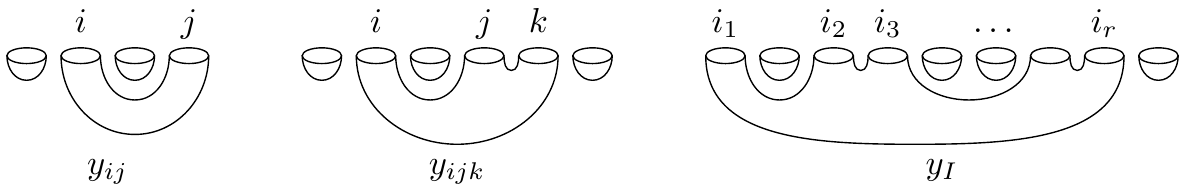}
\caption{Elements $y_{ij},y_{ijk}$ and $y_I$, for $I=\{i_1,\dots, i_r\}$ of $A(k)$.}
\label{fig_3_4}
\end{center}
\end{figure}

\begin{figure}[h]
\begin{center}
\includegraphics[scale=1.0]{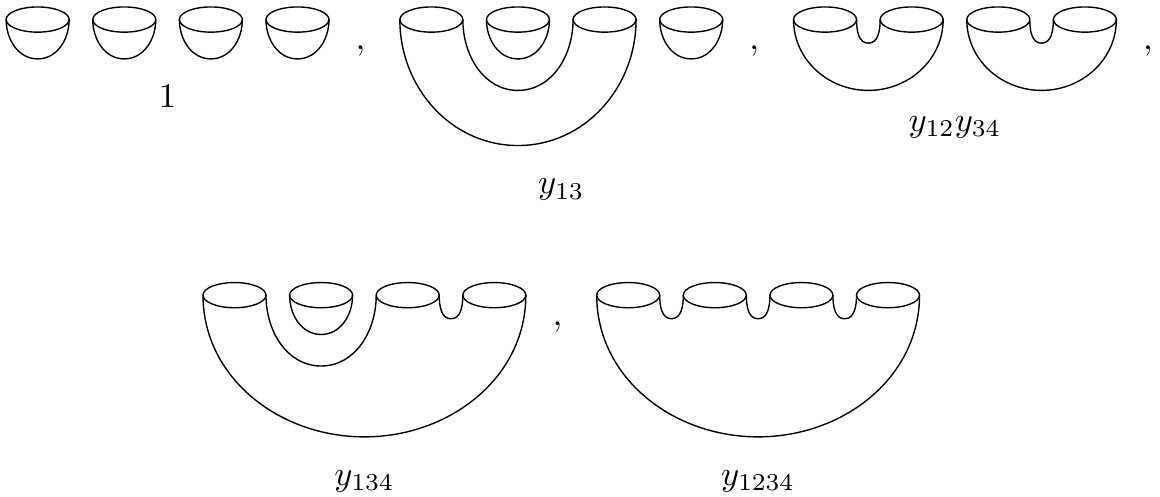}
\caption{Elements $1, y_{1234}$ and examples of elements of types $y_{ij},$ $y_{ij}y_{kl}$, $y_{ijk}$ in $A(4)$.}
\label{fig_3_5}
\end{center}
\end{figure}

We can separate elements of the spanning set for $A(4)$ into five types, see Figure~\ref{fig_3_4}.
\begin{enumerate}
    \item Unit cobordism $1$. 
    \item Six cobordisms $y_{ij}$, $i<j$. 
    \item Three cobordisms $y_{ij}y_{kl}$ with $i,j,k,l$ distinct: $y_{12}y_{34},y_{13}y_{24},y_{14}y_{23},$ each a disjoint union of two tubes. 
    \item Four cobordisms $y_{ijk}$, each a union of a 3-holed sphere and a disk. 
    \item Cobordism $y_{1234}$, which is a 4-holed sphere.  
\end{enumerate}
There are seven elements of types 1-2 and eight elements of types 3-5. 
The inner product between any two elements of types 3-5 is zero due to handle presence. Since there are 8 elements of these types and 7 elements of types 1-2, there is at least one linear relation on 8 elements of types 3-5. We can look for such a relation on linear combinations that are invariant under the permutation action of $S_4$ on the four boundary circles. This quickly yields the following relation
\begin{equation} \label{eq_eight_els}
 (y_{12}y_{34}+y_{13}y_{24}+y_{14}y_{23})-(y_{123}+y_{124}+y_{134}+y_{234}) +\beta y_{1234} = 0 
\end{equation}

To see whether (\ref{eq_eight_els}) is the only relation, we write down part of the matrix of the bilinear form, with rows labelled by the eight vectors in (\ref{eq_eight_els}) and columns by the seven vectors  $\{1,y_{12},y_{13},y_{14},y_{23},y_{24},y_{34}\}$ of types 1 and 2: 
\begin{equation}
    G'_4 = \beta \begin{bmatrix} 
    \beta  & 0 & 1 & 1 & 1 & 1 & 0 \\
    \beta  & 1 & 0 & 1 & 1 & 0 & 1 \\
     \beta & 1 & 1 & 0 & 0 & 1 & 1 \\
     \beta & 0 & 0 & 1 & 0 & 1 & 1 \\
     \beta & 0 & 1 & 0 & 1 & 0 & 1 \\
     \beta & 1 & 0 & 0 & 1 & 1 & 0 \\
     \beta & 1 & 1 & 1 & 0 & 0 & 0 \\
     1     & 0 & 0 & 0 & 0 & 0 & 0
     \end{bmatrix}  . 
\end{equation}
We took the  common multiple $\beta$ of all entries out to the front of the matrix. Relation (\ref{eq_eight_els}) is  the linear relation  on the rows of this matrix:  the sum  of the first  three  rows minus the sum of the next four rows plus $\beta$ times  the last  row  is the  zero  vector. 

Remove one of the first  seven  rows to get
a square $7\times 7$ matrix $G''_4$ that describes part of the  bilinear form on the corresponding spanning set of 14 vectors. It has determinant $8\beta^7$ (having  removed row $7$). The entire $14\times 14$ matrix of the bilinear form on this spanning set is the  block matrix 
\begin{equation*}
    \begin{bmatrix} \ast & (G''_4)^T \\
    G''_4 & 0 \end{bmatrix}. 
\end{equation*}
with determinant $-64\beta^{14}$. 
Consequently, If $2\not= 0$ in $R$, these 14 elements (the above 15 elements without  $y_{234}$) constitute  a  basis in the free $R$-module $A(4).$

If $\beta$ is invertible, we  can instead express $y_{1234}$ as the linear combination of the other  seven elements  in  (\ref{eq_eight_els}). 

\vspace{0.07in} 
  
\begin{table}
\centering
 \begin{tabular}{| c | c | c | } 
 \hline 
  $k$  & rank  & graded rank  \\ [0.5ex] 
 \hline 
  0 & 1 & 1 \rule{0pt}{2.5ex}\\ [0.5ex] 
 \hline
  1 \rule{0pt}{2.5ex}& 1 & 1 \\
 [0.5ex] 
 \hline
  2 \rule{0pt}{2.5ex} & 2 & $1+q^2$ \\
  [0.5ex] 
 \hline
  3 \rule{0pt}{2.5ex} & 5 & $1 + 3q^2 + q^4$ \\
   [0.5ex] 
 \hline
 4 \rule{0pt}{2.5ex} & 14 & $1 + 6q^2 + 6 q^4 + q^6$ \\
 [0.7ex] 
 \hline
\end{tabular}
\vspace{0.2cm} 
\caption{Ranks and graded ranks of $R$-modules $A(k)$, $k\le 4$, when $2$ is invertible in $R$.}
\label{tab_ranks}
\end{table}

Assuming that $2$ is invertible in $R$, we obtain the list of ranks and graded ranks of $A(k)$ as free $R$-modules for $k\le 4$ in Table~\ref{tab_ranks}. 
To make sense of graded dimensions, it's convenient to assume that $R$ is non-positively graded, with $\deg(\beta)=-2$. Elements $y_J$, see Figure~\ref{fig_3_4}, will be positively graded, though, and graded ranks of $A(k)$ have non-negative powers of $q$ in its monomial terms. Rings $A(k)$ are $2\Z$-graded, having  elements of both positive and negative even degrees.  

The sequence  of degrees up  to $k=4$ matches the sequence of Catalan numbers.  The sequence of $q$-degrees matches a particular refinement of Catalan  numbers into polynomials in $q^2$. The  $n$-th Catalan number counts shortest paths in the square lattice from $(0,0)$ to $(n,n)$ that don't go below the main  diagonal. Counting a path with the  coefficient $q^{2r}$, where $r$ is the  number of right-to-up turns in the path results in the polynomials in the rightmost column of Table~\ref{tab_ranks}.

\begin{table}
\centering
 \begin{tabular}{| c | c | c | } 
 \hline 
  $k$  & rank  & graded rank  \\ [0.5ex] 
 \hline 
  5 \rule{0pt}{2.5ex} & 42 & $1+10q^2+20q^4+10q^6+q^8$ \\
  [0.5ex] 
 \hline
  6 \rule{0pt}{2.5ex} & 132 & $1 + 15 q^2 +50q^4 + 50q^6+15q^8+q^{10}$ \\
   [0.5ex] 
 \hline
 7 \rule{0pt}{2.5ex} & 429 & $1 + 21 q^2 + 105 q^4 + 175 q^6+105q^8 + 21 q^{10}+q^{12}$ \\
 [0.7ex] 
 \hline
\end{tabular}
\vspace{0.2cm} 
\caption{Data from~\cite{Ko} on ranks and graded ranks of $R$-modules $A(k)$, $k=5,6,7$, when $2$ is invertible in $R$}
\label{tab_ranks_2}
\end{table} 

Yakov Kononov pointed  out to the author that this $q$-refinement of Catalan numbers into polynomials in $q^2$ is known as the Narayana numbers deformation. The latter count the number of Dyck paths with exactly $r$ right-to-up turns. Kononov also ran computations and confirmed~\cite{Ko} that $A(k)$ are free graded $R$-modules of graded ranks $\sum_{r=0}^{k-1} N(k,r+1)q^{2r}$ for $k=5,6,7$ as well, where $N(k,r+1)$ is the Narayana number, assuming $\Q\subset R$. His results are shown in Table~\ref{tab_ranks_2}. We plan to prove~\cite{KKo} that the pattern holds for all $k$ if $R$ contains $\Q$. 

\vspace{0.07in}

 Recall algebras $A_{0,k}$ defined earlier in this section as the quotients of $A_k$ by the ideal generated by cobordisms which have a component of degree greater that zero. We see that surjective homomorphism
\begin{equation}\label{eq_Ak_hom}
    A_{0,k} \lra A(k) 
\end{equation}
stops being an isomorphism for $k=4$, due to the  relation (\ref{eq_eight_els}), which holds in $A(4)$ for our theory with $Z(T)=\beta$ but not in $A_{0,4}$. In particular, $A_{0,4}$ is a free $R$-module of rank 15, not 14, and the left hand side of (\ref{eq_eight_els}) belongs  to the kernel of the homomorphism (\ref{eq_Ak_hom}) for $k=4.$

\vspace{0.1in} 

{\it Truncation of a fully multiplicative theory:}
Already  this example, of the constant generating function  $Z(T)=\beta$, leads to a non-trivial theory, with interesting skein relations on genus zero cobordisms with four boundary circles, even for invertible $\beta$. While the space $A(1)$ is one-dimensional, spaces $A(k)$ are bigger than $A(1)^{\otimes k}$, as we've just seen. For invertible $\beta$, the theory $Z(T)=\beta$ is essentially the simplest truncation  of the theory with the generating function
\begin{equation}\label{eq_Z1_function} 
Z_1(T)=\frac{\beta}{1-\beta^{-1}T}=\beta+ T + \beta^{-1} T^2 + \dots \ .
\end{equation}
The latter theory evaluates a closed connected surface $S$ of genus $g$ and Euler characteristic $2-2g$ to $\beta^{1-g}=\beta^{\chi(S)/2}$. The state space $A(1)$ in this theory has rank one over $R$, and so are the spaces $A(k)$ for all $k$. The theory is 1-multiplicative, as defined at the end of Section~\ref{sec_uni_n}, with the maps in (\ref{eq_hom_parallel}) isomorphisms, that is, satisfies the Atiyah tensor product axiom. Equivalently, this theory has a neck-cutting formula, essentially the simplest one possible, see Figure~\ref{fig_3_6}.  

\begin{figure}[h]
\begin{center}
\includegraphics[scale=1.0]{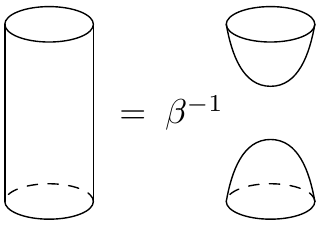}
\caption{Neck-cutting formula, for the theory with the generating function  $Z_1(T)$ in (\ref{eq_Z1_function}).}
\label{fig_3_6}
\end{center}
\end{figure}

It corresponds  to the Frobenius algebra of rank one over $R$, with the trace $\epsilon(1)=\beta$. 

We want to point out that the simplest truncation $Z(T)$  of the generating function $Z_1(T)$ by resetting evaluations at genus one and greater to zero leads to a theory with nontrivial combinatorics of spaces $A(k)$ and nontrivial skein relations for cobordisms, unlike that of $Z_1(T)$. 

\vspace{0.1in}


\subsection{Free theory} \quad 
\vspace{0.05in}

Consider the 'free' theory,  where 
\begin{equation} \label{eq_free_theory}
    R  = R'[\alpha_0,\alpha_1,\dots]
\end{equation}
is the ring of  polynomials in the  evaluations  $\alpha_0,\alpha_1,\dots$ of  surfaces of all genera with coefficients in  a commutative ring $R'$. 
In other words, there  are no polynomial relations on the $\alpha_i$'s in $R$. 
Recall that for any theory there is a surjective map 
\begin{equation}\label{eq_map_again}  
A_k \lra A(k)
\end{equation} 
of commutative algebras, see (\ref{eq_def_Ak}), (\ref{eq_As_map})  and the discussion in  Section~\ref{subset_union_circle}. 

\begin{prop} \label{prop_free_theory} 
Map (\ref{eq_map_again}) is an isomorphism for the free theory (\ref{eq_free_theory}) for all $k$. 
\end{prop}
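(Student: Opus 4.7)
The plan is to prove that the natural surjection $\varphi : A_k \to A(k)$ is injective, since surjectivity is already built into the construction. The $R$-module $A_k$ has basis $\Cobk$, whose elements $S_{\pi,\mathbf{g}}$ are parametrized by a set partition $\pi = (I_1,\ldots,I_m)$ of $\{1,\ldots,k\}$ together with block genera $\mathbf{g} = (g_1,\ldots,g_m) \in \Z^m_{\ge 0}$. Suppose $v = \sum c_{\pi,\mathbf{g}}[S_{\pi,\mathbf{g}}]$ is a finite combination lying in the kernel of the bilinear form, so that $(v, T) = 0$ in $R$ for every cobordism $T$ with $\partial T = \sqcup_k \SS^1$. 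The plan is to exhibit a family of test cobordisms producing pairings that are pairwise distinct monomials in the free variables $\alpha_0, \alpha_1, \ldots$, which then forces each $c_{\pi,\mathbf{g}}$ to vanish.

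For test cobordisms I would take $T_{\mathbf{h}} := \bigsqcup_{i=1}^{k}\SS^1_{h_i}$, the disjoint union of $k$ once-punctured surfaces of genera $h_1,\ldots,h_k \in \Z_{\ge 0}$, which has the all-singleton partition. Euler-characteristic additivity under gluing along circles, applied one component of $S_{\pi,\mathbf{g}}$ at a time, shows that the connected component of $\overline{S_{\pi,\mathbf{g}}} \cup T_{\mathbf{h}}$ corresponding to block $I_a$ has genus $g_a + \sum_{i\in I_a} h_i$, giving by multiplicativity
\[
\alpha\bigl(\overline{S_{\pi,\mathbf{g}}} \cup T_{\mathbf{h}}\bigr) \;=\; \prod_{a=1}^{m} \alpha_{g_a + \sum_{i\in I_a} h_i} \;=:\; f_{\pi,\mathbf{g}}(\mathbf{h}),
\]
a monomial in the polynomial generators of $R = R'[\alpha_0, \alpha_1, \ldots]$.

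The key step is the combinatorial claim that the map $(\pi,\mathbf{g}) \mapsto f_{\pi,\mathbf{g}}$ is injective for generic $\mathbf{h}$: since a monomial in the $\alpha_\bullet$ is determined by its multiset of subscripts, one must check that the multiset of affine forms $\{g_a + \sum_{i\in I_a} h_i\}_{a=1}^m$ in the variables $h_1,\ldots,h_k$ uniquely recovers $(\pi,\mathbf{g})$. This is elementary because the linear parts $\sum_{i\in I_a} h_i$ associated to distinct subsets $I_a \subseteq \{1,\ldots,k\}$ are distinct as linear forms in the independent variables $h_i$; thus $\pi$ is recoverable from the multiset of linear parts, and then $\mathbf{g}$ is pinned down by matching constant terms. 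Given this, only finitely many $(\pi,\mathbf{g})$ appear in $v$, and by avoiding a finite union of affine hyperplanes one picks $\mathbf{h}^{\ast}\in\Z^k_{\ge 0}$ for which the values $\{f_{\pi,\mathbf{g}}(\mathbf{h}^{\ast})\}$ are pairwise distinct monomials. The equation $\sum c_{\pi,\mathbf{g}} f_{\pi,\mathbf{g}}(\mathbf{h}^{\ast}) = 0$ is then a vanishing $R'$-linear combination of pairwise distinct monomials in the polynomial ring $R$, so every $c_{\pi,\mathbf{g}} = 0$ and $\varphi$ is injective, completing the proof.

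The step I expect to require the most care is the combinatorial uniqueness claim of the previous paragraph: a priori, two partitions with the same number of blocks but different shapes, together with different genus assignments, might yield the same monomial for every $\mathbf{h}$. The fact that the $h_i$ are independent variables resolves this, but one must rule out all possible coincidences among the affine forms $g_a + \sum_{i\in I_a} h_i$, which is exactly where the freedom of the polynomial ground ring $R = R'[\alpha_0,\alpha_1,\ldots]$ is used decisively.
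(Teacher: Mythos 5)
Your choice of test cobordisms $T_{\mathbf{h}}=\sqcup_i\SS^1_{h_i}$ matches the paper's strategy, but there is a genuine gap in the final step, stemming from not accounting properly for the ground ring $R = R'[\alpha_0,\alpha_1,\ldots]$. Your coefficients $c_{\pi,\mathbf{g}}$ live in $R$, not in $R'$, so the equation $\sum c_{\pi,\mathbf{g}} f_{\pi,\mathbf{g}}(\mathbf{h}^{\ast}) = 0$ is an \emph{$R$-linear} combination of the monomials $f_{\pi,\mathbf{g}}(\mathbf{h}^{\ast})$, and such a combination can vanish even when the $f$'s are pairwise distinct: for instance $\alpha_2\cdot\alpha_1 + (-\alpha_1)\cdot\alpha_2 = 0$. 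Distinctness of monomials only kills $R'$-linear relations, so the concluding sentence of your proof does not follow from what precedes it. In fact your own labeling signals the confusion, since you describe the relation as "a vanishing $R'$-linear combination" while the coefficients $c_{\pi,\mathbf{g}}$ you introduced are elements of $R$.

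The missing idea, which is the first step of the paper's proof, is to re-base: because $R=R'[\alpha_0,\alpha_1,\ldots]$ and each monomial in the $\alpha$'s is the evaluation of a disjoint union of closed surfaces, $A_k$ is also a \emph{free $R'$-module} whose basis consists of \emph{all} 2-manifolds $S$ with $\partial S=\sqcup_k\SS^1$, now permitting closed components. Expanding $v$ in that basis yields coefficients $a_S\in R'$, and the pairing $\alpha(-T_{\mathbf h}\cup S)$ is again a monomial in the $\alpha$'s. Once you are working over $R'$, the "distinct monomials" conclusion is legitimate. However, you must then re-examine distinctness because the monomial for $S$ now includes the fixed indices $\{\alpha_{g'_b}\}$ of the closed components, which can in principle collide with the indices $g_a+\sum_{i\in I_a}h_i$ coming from the non-closed blocks for small $\mathbf{h}$. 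The paper handles this cleanly by taking $\mathbf{h}=(K,K^2,\ldots,K^k)$ with $K$ much larger than the largest genus appearing in the support, so that the two kinds of indices are separated by size and the partition and its genera can be read off uniquely. Your "generic $\mathbf{h}$, avoiding finitely many affine subspaces" idea can likely be made to work, but only after the re-basing step; the combinatorial claim you sketched (recovering $\pi$ from the linear parts of the affine forms) must then be upgraded to account for the constant contributions from closed components as well.
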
 
\begin{proof} 
The proposition says that there are no skein relations on cobordisms with boundary beyond the  relations on
closed cobordisms. The latter relations say that a closed surface of genus $g$  evaluates to  $\alpha_g$. The proof is rather straightforward. The   reader may  wish to compare with the proof of Theorem~2.1 in Freedman et al.~\cite{FKNSWW}, who show that their universal pairing is positive-definite in dimension two. Our proposition is simpler. 

In the  proof, we can restrict to the $k>0$ case. 

For an oriented 2-manifold $S$ with boundary $\sqcup_k\SS^1$ denote by $S_{\partial}$ the union of components of $S$ with non-empty boundary. Equivalently, $S_{\partial}$ is obtained from $S$ by  removing all closed components. 

To $S$ assign its genus $g(S)$, which is the sum of genera of its connected components, both closed and with boundary. Then $g(S_{\partial})$ is the genus of the non-closed portion of $S$. 

\vspace{0.07in}

Notice that $A_k$ is a free $R$-module with a basis consisting of homeomorphism classes rel boundary of 2-manifolds $S'$ with $\partial(S')=\sqcup_k \SS^1$ and no closed components. 
Consequently, for our choice  of $R$, $A_k$ is a free $R'$-module with basis consisting of homeomorphism classes rel boundary of 2-manifolds $S$ with $\partial(S)=\sqcup_k \SS^1$. 
Write a non-zero element of $A_k$ as
\begin{equation}\label{eq_sum_S}
    a = \sum_S  a_S \cdot S, \ a_S \in R', a_S  \not= 0,
\end{equation}
the sum over finitely many $S$. To show that its  image in $A(k)$ is not zero, we need to find a 2-manifold $T$ with the same boundary $B=\sqcup_k \SS^1$ such that 
\begin{equation}\label{eq_sum_S_alpha} 
    \sum_S a_S \cdot \alpha(-T \cup_B S)  
\end{equation}
is not $0$ in $R'$, where $\alpha(-T \cup_B S)$ is the  evaluation of the closed surface $-T \cup_B S$ given by  gluing $T$ and  $S$ along common boundary $B$. 

We fix an order on the boundary circles and label them $1$ through $k$. 
Let $g$ be the maximum  of genera  of $S$  among  all  $S$ in the sum.   
Choose a large integer $K$ with the  property that in the set of powers  $\{K,K^2,\dots, K^k\}$ any two distinct sums of its elements differ by more that $g+k$ (taking $K>g+k$ should  suffice). 

Consider the cobordism $T$ which is the  disjoint union of connected surfaces of genera $K,K^2,\dots, K^k$ with one boundary component each, corresponding to circles $1,2,\dots, k$ in this order. Take a surface $S$ from the above sum and  consider $ \alpha(-T \cup_B S) $, which is a product of $\alpha_i$'s. Looking at this product, we can single out components of $-T \cup_B S$ that came from closed components of  $S$ -- these are the components of genus at most $g$, contibuting $\alpha_i$'s with $i \le g$. Looking at terms in the product of genus at least $K$, we can figure out which circles belong to the same connected  component of $S$ as well as the genus of that connected component. Since we can reconstruct each $S$ uniquely from $ \alpha(-T \cup_B S) $, this means that there are no cancellations in the sum (\ref{eq_sum_S_alpha}), and it has as many terms as the original sum (\ref{eq_sum_S}). In particular, the sum in (\ref{eq_sum_S}) is not zero in $A(k)$. Proposition follows. 
\end{proof} 

{\emph  Remark:} 
It may be interesting to investigate theories where the quotient map $A_1\lra A(1)$ in (\ref{eq_map_again}) is an isomorphism, but $A_k\lra A(k)$ is not, for some $k>1$. In other words, there are no new skein relations when transitioning from closed surfaces to those with one boundary component, but there are skein relations on surfaces with several boundary components. 

\vspace{0.1in}   


\subsection{More  examples} 
\label{subsec_poly_inv} 



\subsubsection{Inverse of a polynomial generating function.}
Now consider the case that $Z(T)=Q(T)^{-1}$, for $Q(T)=(1-\gamma_1 T)\dots (1-\gamma_M T)$ as above. Then 
\[ Z(T) = \sum_{n\ge 0} h_n(\gamma) T^n = \sum_{n\ge 0}\ovh_n T^n.
\]
The Hankel determinant for the first $K$ vectors $1,x,\dots, x^{K-1}$ has the form 
\begin{equation} \label{eq_hankel_poly_2}
      H_{[0,K-1]} =  
  \left( {\begin{array}{ccccc}
   1 & \ovh_1 & \ovh_2 & \dots & \ovh_{K-1} \\
   \ovh_1 & \ovh_2 & \ovh_3 & \dots & \ovh_{K} \\
   \ovh_2 & \ovh_3 & \ovh_4 & \dots &  \ovh_{K+1} \\
   \dots & \dots & \dots & \ddots & \dots \\
   \ovh_{K-1} & \ovh_{K} & \ovh_{K+1} & \dots & \ovh_{2K-2} \\
  \end{array} } \right)
\end{equation}
Multiplying by the longest permutation matrix  $J_K$, as in formula (\ref{eq_toeplitz_N}), produces the Jacobi-Trudi matrix  
$J_K H_{[0,K-1]}$ for the partition $\lambda_K=((K-1)^K)$ with the determinant equal to the corresponding Schur function. Notice that the number of variables $\gamma_1,\dots, \gamma_M$ is $M$. When $K=M$, the determinant 
\begin{equation}
    \det( H_{[0,M-1]}) = (-1)^{M(M-1)/2}(\gamma_1\dots \gamma_M)^{M-1}= (-1)^{M(M-1)/2} \cdot (\ove_M)^{M-1}, 
\end{equation}
see remark at the end  of Section~\ref{subsec_state_hankel}. 
Theorem~\ref{thm_2} specializes in this case to the following  result.  

\begin{prop} \label{prop_state_inverse} 
The state space $A=\alpha(\SS^1)$ for the generating function $Z(T)= Q(T)^{-1}$, with $Q(T)=(1-\gamma_1 T)\dots (1-\gamma_MT)$ over the  base ring $R_{M,0}$ is a free $R_{M,0}$-module of rank $M$ with a basis $1,x,\dots, x^{M-1}$. As an $R_{M,0}$-algebra, it is given by   
\begin{equation}\label{eq_monic_M_only}
      A\cong R_{M,N}[x]/(x^M - \ove_1 x^{M-1}+\dots +(-1)^M \ove_M), 
  \end{equation}
  where $\ove_i$  is  the $i$-th elementary symmetric function in $\gamma_1,\dots, \gamma_M$. 
\end{prop}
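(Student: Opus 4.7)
The plan is to derive Proposition~\ref{prop_state_inverse} as the $N=0$ specialization of Theorem~\ref{thm_2}, after verifying that all pieces match up in this boundary case. First I would set $N=0$ in the rational generating function (\ref{eq_Z_rat_1}): the numerator $P(T)=\prod_{i=1}^{N}(1+\beta_i T)$ becomes the empty product $1$, so $Z(T)=1/Q(T)$ agrees with the generating function in the statement, and $\alpha_n=\ovh_n$ in the notation of Section~\ref{subsec_semiu}. The ring $R_{M,N}$ of (\ref{eq_ring_R_2}) specializes to $R_{M,0}=\sym_M(\gamma)=\kk[\ove_1,\dots,\ove_M]$, since the $\beta$-variables disappear.

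Next, observe that $K=\max(N+1,M)=\max(1,M)=M$ for $M\ge 1$, so the basis $\{1,x,\dots,x^{K-1}\}$ provided by Theorem~\ref{thm_2} becomes $\{1,x,\dots,x^{M-1}\}$, and the factor $x^{K-M}=x^0=1$ in the relation (\ref{eq_monic_x}) drops out, turning it into the monic polynomial in (\ref{eq_monic_M_only}). Thus the algebra presentation and freeness both follow directly from Theorem~\ref{thm_2}.

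For self-containment it is worth recording the two ingredients of Theorem~\ref{thm_2} in this simpler case. The linear independence of $1,x,\dots,x^{M-1}$ comes from computing the Hankel determinant (\ref{eq_hankel_poly_2}): multiplying by $J_M$ yields the Jacobi-Trudi matrix for the partition $\lambda_M=((M-1)^M)$, whose Schur function in $M$ variables equals $(\gamma_1\cdots\gamma_M)^{M-1}=\ove_M^{M-1}$, so the determinant is $(-1)^{M(M-1)/2}\ove_M^{M-1}$, a nonzero element of the integral domain $R_{M,0}$. The spanning side comes from the identity $Z(T)Q(T)=1$: matching coefficients of $T^n$ for $n\ge M$ gives the recursion $\alpha_n-\ove_1\alpha_{n-1}+\dots+(-1)^M\ove_M\alpha_{n-M}=0$, which, via the argument of equations (\ref{eq_linear1})-(\ref{eq_linear3}), translates into the relation $x^M-\ove_1 x^{M-1}+\dots+(-1)^M\ove_M=0$ in $A$ (no trailing factor of $x^{K-M}$ is needed since $K=M$), and also shows that higher powers of $x$ reduce to the listed basis.

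The main obstacle is essentially bookkeeping: one must confirm that the general analysis of Section~\ref{subsec_semiu}, which was phrased assuming both $M,N\ge 1$ implicitly, remains valid when $N=0$, and in particular that the degenerate edge case $M=0$ (where $Q(T)=Z(T)=1$, $K=1$, and the statement reduces to $A\cong R_{0,0}=\kk$ generated by $1$ with $x=0$) does not require a separate argument; it does not, since the same Hankel/recursion analysis applies trivially. With these checks the proposition is immediate.
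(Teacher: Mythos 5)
Your proposal is correct and follows the same route as the paper: the paper's own text immediately preceding Proposition~\ref{prop_state_inverse} sets up the Hankel matrix~(\ref{eq_hankel_poly_2}), multiplies by $J_K$ to recognize the Jacobi--Trudi matrix, evaluates $\det H_{[0,M-1]} = (-1)^{M(M-1)/2}(\ove_M)^{M-1}$, and then explicitly states that ``Theorem~\ref{thm_2} specializes in this case to the following result,'' which is exactly your argument. One tiny caveat worth noting: your side remark about $M=0$ is a bit off, since the proposition as literally stated would give rank $M=0$ and an empty basis, whereas $K=\max(1,0)=1$ actually gives $A\cong\kk$ of rank one; the proposition is implicitly for $M\geq 1$, so no separate argument is needed, but not for the reason you give.
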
 

\vspace{0.1in} 


\subsubsection{Rank two Frobenius extensions.}
\label{subsub_rank_2} For information on Frobenius extensions of rank two  we refer the reader to~\cite{Kh2,TT}; more references can be found in~\cite{KR}. 
Consider Frobenius extension  of rank two  with 
\begin{eqnarray*} 
 &  &  R= \Z[E_1,E_2], \ A = R[X]/(X^2-E_1X + E_2),\\ 
 &   & \epsilon(X)=1, \ \epsilon(1)=0, \\
 &    &  \Delta(1) = X\otimes 1 + 1\otimes X - E_1\cdot  1\otimes 1
\end{eqnarray*} 
(comultiplication $\Delta$ is uniquely determined by the  trace $\epsilon$). Let $\mcD=E_1^2-4 E_2$. In this 2D TQFT the  value of  the  closed surface of genus $2n+1$ is $2\mcD^n$, for $n\ge 0$, while closed surfaces of even genus evaluate to zero. Consequently, the generating function is
\begin{equation}\label{eq_gen_f_1}
    Z(T) = 2T(1+\mcD T^2+ \mcD T^4 + \dots ) = \frac{2T}{1-\mcD T^2}. 
\end{equation}
When the cofficient ring is enlarged from $R$ to $R_{u}=\Z[u_1,u_2]$ with $E_1=u_1+u_2$, $E_2=u_1u_2$, the  discriminant becomes a square, 
\[
\mcD=E_1^2-4 E_2 =  (u_1-u_2)^2,
\]
and the generating function can be further factored 
\begin{equation}
    Z(T) = \frac{2T}{1-(u_1-u_2)^2 T^2} = 
    \frac{2T}{(1-(u_1-u_2) T)(1+(u_1-u_2)T)}, 
\end{equation}
with linear in $T$ terms in the denominator.  
We changed from $\alpha_1,\alpha_2$ in~\cite{KR} to $u_1,u_2$ here to avoid a clash of notations, since $\alpha_i$'s are already in use in this paper. 

The trace map in  this extension can  be deformed to \begin{equation*}
   \epsilon(1) = \rho_0, \  \epsilon(X)=\rho_1,
\end{equation*}
using  notations in~\cite[Section~4.3]{KK}, also 
see~\cite{TT,V}. 
Condition that   $\{1,X\}$ is a basis and the pairing  is  unimodular in this basis translates into the  invertibility of the Gram determinant (denoted  $\rho$) for these vectors 
\[ \rho = 
\det \begin{bmatrix}   \rho_0 & \rho_1 \\
    \rho_1 & E_1 \rho_1 - E_2 \rho_0 \end{bmatrix} = 
    - (E_2 \rho_0^2 - E_1 \rho_0 \rho_1 + \rho_1^2) = - (\rho_1-u_1\rho_0)(\rho_1-u_2\rho_0) , 
\] 
or, equivalently, invertibility of $\epsilon(X-u_1)=\rho_1-u_1\rho_0$ and $\epsilon(X-u_2)=\rho_1-u_2\rho_0$ when the ground ring  is $R_u$. Let $\rho'=E_1\rho_1-E_2\rho_0$. 

With the invertibility assumption, we can write down the formula for comultiplication 
\begin{equation}\label{eq_rank_2_comult}
\Delta(1)  =   \rho^{-1}\left( (\rho' \cdot 1 - \rho_1 \cdot X)
\otimes 1+ (-\rho_1 \cdot  1 + \rho_0\cdot X)\otimes X \right),
\end{equation}
Consequently, 
\begin{eqnarray*}
    m (\Delta(1)) & = & \rho^{-1}(E_1\rho_1 - E_2\rho_0 - 
    2\rho_1 X + \rho_0 X^2)  \\ 
    &  = &  \rho^{-1}((E_1\rho_1 - 2 E_2\rho_0)\cdot  1 
    + (\rho_0 E_1 - 2 \rho_1)\cdot X) \\
    & = &  \rho^{-1}((\rho_0 u_2-\rho_1)(X-u_1)+  (\rho_0 u_1 -\rho_1)(X-u_2)) \\
    & = & -\left( (\rho_0 u_1-\rho_1)^{-1}(X-u_1)+  (\rho_0 u_2 -\rho_1)^{-1}(X-u_2) \right) .
\end{eqnarray*}
The connected surface $\SS^1_n$ of  genus  $n$  with one boundary circle will represent the  element  $[\SS^1_n]$ in $A(1)$. The above formula make this element easy to compute, when we enlarge the ground ring from $R$ to $R_u$ and the state space from $A(1)$ to $A_u(1)\cong A\otimes_R R_u$. Computation is easy  since in $R_u$ 
\[ 
(X-u_1)(X-u_2)=0, \  (X-u_1)^2=(u_2-u_1)(X-u_1), \ 
(X-u_2)^2=(u_1-u_2)(X-u_2),
\] so that 
\begin{eqnarray*}
 (m\Delta(1))^n & = & (\rho_1-\rho_0 u_1)^{-n}(X-u_1)^n+  (\rho_1-\rho_0 u_2 )^{-n}(X-u_2)^n \\
 &  = & (\rho_1-\rho_0 u_1)^{-n}(u_2-u_1)^{n-1}(X-u_1)+  (\rho_1-\rho_0 u_2)^{-n}(u_1-u_2)^{n-1}(X-u_2), \\
 & = & (u_2-u_1)^{n-1}\left( (\rho_1-\rho_0 u_1)^{-n}(X-u_1)+(-1)^{n-1}  (\rho_1-\rho_0 u_2)^{-n}(X-u_2)\right) 
\end{eqnarray*}
and the trace 
\begin{eqnarray*}
\epsilon((m\Delta(1))^n) & = & (u_2-u_1)^{n-1}\left( (\rho_1-\rho_0 u_1)^{-n+1}+(-1)^{n-1}  (\rho_1-\rho_0 u_2)^{-n+1}\right) .
\end{eqnarray*}

We can now compute the generating function for this  theory:  
\begin{eqnarray*}
Z(T) & = & \sum_{n\ge 0} \epsilon((m\Delta(1))^n) T^n 
\\
 & = & \sum_{n\ge 0}\left( \left(\frac{u_2-u_1}{\rho_1-\rho_0u_1}\right)^{n-1} T^n
 + (-1)^{n-1} \left(\frac{u_2-u_1}{\rho_1-\rho_0 u_2}\right)^{n-1} T^n \right)
   \\
 & = &   \frac{1}{u_1-u_2} \left( \frac{(\rho_1-\rho_0 u_2)^2}{\rho_1-\rho_0 u_2-(u_1-u_2)T)} - \frac{(\rho_1-\rho_0 u_1)^2}{\rho_1-\rho_0 u_1-(u_2-u_1)T)}
  \right)  \\
  & = &  \frac{\rho_0\rho + (2\rho-\rho_0^2\mcD )T}{\rho -\rho_0 \mcD T +\mcD T^2}
\end{eqnarray*} 
To recover the special case (\ref{eq_gen_f_1}) we specialize 
$\rho_0=0$, $\rho_1=1$, giving  $\rho=-1$. 

\vspace{0.07in} 

This theory is 1-multiplicative, with $1_{tu}$ decomposable into an element of   $A(1)^{\otimes 2}$. The neck-cutting relation can be read  off  the formula  (\ref{eq_rank_2_comult})  for $\Delta(1)$. Note that the coefficient at $T$ in the  power series is $2$, the dimension of the Frobenius algebra.

\vspace{0.1in} 


\subsubsection{Generating function $\beta/(1-\gamma T)$.} 
Consider the generating  function
\begin{equation}  \label{eq_Z_rat1}
    Z(T) = \frac{\beta}{1-\gamma T} = \beta + \beta\gamma T + \beta\gamma^2 T^2 + \dots ,
\end{equation}
where $\beta,\gamma\in R$, $\beta,\gamma\not=0$ and $R$ is an  integral domain. The evaluation of genus $g$ closed surface  in this theory is  $\beta\gamma^g$. Skein relation in 
Figure~\ref{fig_3_10} holds.

\begin{figure}[h]
\begin{center}
\includegraphics[scale=1.0]{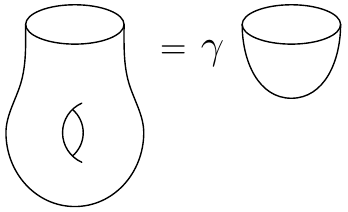}
\caption{Skein relation for the generating function  in 
(\ref{eq_Z_rat1}).}
\label{fig_3_10}
\end{center}
\end{figure}

The  space $A(1)\cong R$, with disk  $[\SS^1_0]$  as the  generator, 
and  $[\SS^1_n]=x^n=\gamma^n \cdot  1$. Since genus can be reduced to zero in  this  theory, elements $1$ and  $y_{12}$ in Figure~\ref{fig_3_2} span  $A(2)$, and elements  $1,y_{12},y_{13},y_{23},y_{123}$ in Figure~\ref{fig_3_3} span $A(3)$. The Gram matrices are 

\begin{equation}
    G_2= \beta\begin{bmatrix} \beta & 1 \\
    1 & \gamma \end{bmatrix}, \ \ 
    G_3 = \beta\begin{bmatrix} 
     \beta^2 & \beta & \beta & \beta & 1 \\
     \beta & \beta\gamma     & 1 & 1 & \gamma \\
     \beta & 1 & \beta\gamma     & 1 & \gamma \\
     \beta & 1 & 1 & \beta\gamma    & \gamma \\
     1   & \gamma     & \gamma    & \gamma     & \gamma^2 
     \end{bmatrix}  . 
\end{equation}
Each coefficient of $G_2$ and $G_3$ is divisible by $\beta$, and we took it out of both matrices. The  determinants are 
\begin{equation}
    \det(G_2) = \beta^2(\beta\gamma-1), \ \ 
    \det(G_3)=\beta^5 (\beta\gamma-1)^4 (\beta\gamma-2). 
\end{equation}

We see that, unless $\beta\gamma=1$ or $\beta\gamma=2$, the state  space $A(3)$ is five-dimensional with the above basis. The space $A(2)$ is two-dimensional,  unless  $\beta\gamma=1$.   
If  $\beta\gamma=1,$ that is, $\gamma=\beta^{-1}$, we get a genuine 2D TQFT with the generating function (\ref{eq_Z1_function})   discussed at the end of  Section~\ref{subsec_constant}, with  one-dimensional state spaces $A(k)\cong A(1)^{\otimes k}$. The theory is 1-multiplicative  only  if  $\beta\gamma=1$.

If $\beta\gamma=2$, the generating function  
can be written as 
\begin{equation}  \label{eq_Z_rat2}
    Z(T) =  \beta + 2 T + 2 \gamma T^2 + \dots =\beta + 2 T\sum_{n\ge 0} (\gamma T)^n .
\end{equation}
The state space $A(2)$ has rank two and a basis $\{1,y_{12}\}$, while there is a linear  relation on the above five spanning elements  of  $A(3)$:  
\begin{equation}
    \gamma^2\cdot 1  - \gamma(y_{12}+y_{13}+y_{23}) + 2y_{123}=0. 
\end{equation}
If 2 or $\gamma$  is  invertible  in $R$, we can conclude that $A(3)$ is  a free $R$-module of rank four. 


\subsubsection{Maps into  multiplicative  theories.} 
Consider a standard  example of  a  commutative Frobenius algebra, the even-dimensional cohomology ring $B=\mathrm{H}^{2\ast}(X,\kk)$ of an oriented $2m$-dimensional manifold  $X$ over a field  $\kk$ with  the standard trace $\epsilon: \mathrm{H}^{2m}(X,\kk)\stackrel{\cong}{\lra}\kk.$  
This  theory can be made $\Z$-graded, by shifting the grading of $B$ down by $m$  so that it sits between degrees $-m$ and $m$. 
Then the map $b(S)$ between tensor powers of $B$  associated in this theory to a 2D cobordism $S$ has degree  $\deg(b(S))= - m \chi(S)$,  proportional  to the Euler characteristic of  $S$. 
For a closed $S$ the map $b(S)$ is the multiplication by an element of the ground field $\kk$. Consequently, for a closed connected surface $S$ of genus $g$ the invariant $b(S_g)=0$ if $g\not= 1$ and $b(S_1)=\dim(B)$. Here we assume that $m>0$. 

Consider the universal theory over $\kk$ for this generating  function, assuming that $\dim(B)\not=0$ in $\kk$, that is, $\mathrm{char}(\kk)$ is not a divisor of $\dim(B)$. This universal theory depends only on the choice of field $\kk$ and on the dimension of $B$.  It has the generating function  $Z(T)=\dim(B)\cdot T.$ 

As usual, denote the state space of $k$ circles in this  theory by $A(k)$.
There are natural $\kk$-linear maps
\begin{equation}\label{eq_A_to_B} 
    A(k) \lra B^{\otimes k}
\end{equation}
that preserve bilinear forms on $A(k)$ and $B^{\otimes k}$, respectively, where the bilinear form on the latter is the tensor power of that on $B$ given  by the trace $\epsilon$. This map is induced by the  natural homomorphism $\phi_k: A_k \lra B^{\otimes k}$ sending a cobordism from the  empty one-manifold $\emptyset_1$ to the  union of $k$ circles to the corresponding map in the 2D TQFT $(B,\epsilon),$ so that there is a factorization of $\phi_k$ as 
\begin{equation}\label{eq_A_to_B_factor} 
    A_k \lra A(k) \lra B^{\otimes k}.
\end{equation}

Since the collection of these maps $A(k)\lra B^{\otimes k}$, over all $k\ge 0$, respects the bilinear forms on $A(k)$ and $B^{\otimes k}$, and bilinear forms are non-degenerate on $A(k)$, we see that homomorphisms $\phi_k$ are injective. In particular, $A(k)$ is isomorphic to its image in $B^{\otimes k}$, for any $B$ with fixed $n=\dim(B)$ in $\kk$. In particular, to determine $A(k)$ given $n$ and $Z(T)=nT$, one can choose $B$ to be $\mathrm{H}^{\ast}(\mathbb{CP}^{n-1},\kk)$, the cohomology of $\mathbb{CP}^{n-1}$, or any other commutative  Frobenius graded algebra $B$ such that 
\begin{equation}
   B= \oplus_{i=0}^m B^{-m+2i}, \   B^{-m}\cong \kk \cdot 1, \ B^m\cong \kk, 
\end{equation}
for some $m>0$, with 
$\dim(B)=n$, and structure  maps of $B$ associated to cobordisms $S$ having degree $-m\chi(S)$. 

A connected oriented 2-manifold with boundary $\sqcup_k \SS^1$ gives the zero vector in $A(k)$ and $B^{\otimes k}$ if it has at least two handles, since  the evaluation is trivial on any closed surface of genus at least two, and genus two cobordism $\SS^1_2$ with one boundary component  gives the zero vector in $B$. Notice that genus one cobordism $\SS^1_1$ gives the top degree vector $w\in B$ scaled so that  $\epsilon(w)=\dim(B)$. Then the  invariant of  $\SS^1_2$ is $w^2=0\in B$. 

There is a skein relation in either theory, in Figure~\ref{fig_3_9}. 

\begin{figure}[h]
\begin{center}
\includegraphics[scale=1.0]{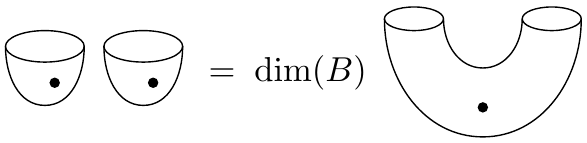}
\caption{Skein relation in $A(2)$ and $B^{\otimes 2}$.}
\label{fig_3_9}
\end{center}
\end{figure}

To summarize, if a component carries two dots, the cobordism evaluates to zero. If two genus zero components each carry a dot, they can be merged into  a single genus zero component with a dot. Hence, any cobordism reduces, up to scalars $\dim(B)^{\ell}$, to a disjoint union of connected  genus zero cobordisms, at most one of which may carry a dot. Such cobordisms give a spanning set for  $A(k)$. They are enumerated by  decompositions of $k$, together with a choice of at most one set in the decomposition. This spanning set is not a basis of $A(k)$, admitting  other skein relations that depend on $n$.  


\vspace{0.07in} 

The above is a  rather  degenerate  example due  to a very special  form of  the generating function, $Z(T)=\dim(B)T.$ To modify this  construction, one can, for instance, consider   manifolds $X$
with  an action  of a compact Lie group $G$ on $X$. One would want the equivariant cohomology $\mathrm{H}^{\ast}_G(X,\kk)$ to be  a Frobenius  algebra  over  $\mathrm{H}^{\ast}_G(p,\kk)$, where $p$  is a point, so it may make sense to restrict  to $G$-formal  manifolds, see~\cite{GKM} and follow-up papers. Equivariant cohomology $\mathrm{H}^{\ast}_G(p,\kk)$, which is the cohomology of  the classifying space  $BG$, is usually nontrivial in various positive even degrees, leading to non-vanishing of $\alpha_g$ for at least some $g>1$ and producing more general power series $Z(T)$. 
Example in  Section~\ref{subsub_rank_2} above  is  of that type, with $X=\SS^2$  and $G=U(2)$ or   $G=U(1)\times U(1)$ with the standard action on $\SS^2$ via the identification of the latter with $\mathbb{CP}^1$. 
One may further generalize from cohomology to more general $G$-equivariant complex-oriented cohomology theories, subject to suitable $G$-formality assumptions on $X$. 

\vspace{0.07in}

Maps (\ref{eq_A_to_B}) are $S_k$-equivariant under the natural action of the  symmetric  group $S_k$ on both spaces induced by permutations of boundary circles. To understand  spaces $A(k)$ for  more general generating functions $Z(T)$ one may search for similar $S_k$-equvariant maps, over all $k$, that respect bilinear forms, where $B$ is just a $\kk$-vector space with a bilinear form rather than a commutative Frobenius algebra.  

\vspace{0.1in}


\subsubsection{Non-injectivity of disjoint union maps.} \label{subsub_nonin}

Let us briefly discuss possible non-injectivity of maps  $\alpha_{N_0,N_1}$ in $(\ref{eq:can_map_tensor})$ when  $R$ is not a field, also see Proposition~\ref{prop_injective}. 

On  the algebraic level, one reason for non-injectivity is the following.  Suppose given free $R$-modules  $F'_1$, $F'_2$ with symmetric bilinear  forms $(,)_1,(,)_2$. Form quotient modules 
$F_i  = F_i'/\mathrm{ker}((,)_i)$, $i=1,2$. The free $R$-module $F_{12}':= F'_1\otimes_R F'_2$ comes with the bilinear form $(,)_{12}$ given by the tensor product of forms $(,)_1,(,)_2$, and  one  can consider the quotient module $F_{12}=F_{12}'/\mathrm{ker}((,)_{12})$. The natural map  
\begin{equation}\label{eq_bilin_not_surj}
    F_1\otimes_R F_2 \lra F_{12}
\end{equation}
is surjective but not, in general, injective. 

An example is given by taking $R=\kk[\beta]/(\beta^2)$ and  one-dimensional modules  $F_i'=Rv_i$, $i=1,2$ with inner  products $(v_i,v_i)=\beta$. Then on $F_{12}'=R( v_1\otimes  v_2) $  the  bilinear form is identically  zero, so that $F_{12}=0$, while $F_i\cong \kk v_i$ and  $F_1\otimes_R F_2\cong \kk (v_1\otimes v_2)$, with the trivial action of $\beta$. The map  in (\ref{eq_bilin_not_surj}) is not injective, taking  $\kk$ to the  zero module. 

In  the above example  $R$ is not an integral domain. For an example with $R$ an  integral domain, take  $R=\kk[a_{11},a_{12},a_{22}]/(a_{11}a_{22}-a_{12}^2)$ and two-dimensional free $R$-modules $F_1'=F_2'=Rv_1\oplus  Rv_2$ with the Gram matrix $ \begin{bmatrix} a_{11} & a_{12}\\
    a_{12} & a_{22} \end{bmatrix} $ describing the bilinear  form in  the basis $\{v_1,v_2\}$. 

Ring $R$ has a maximal ideal $\mathbf{m}=(a_{11},a_{12},a_{22})$.
$R$-modules  $F_1\cong F_2$ that are quotients of $F_1'=F_2'$ above by the kernel of the bilinear form surject onto a     two-dimensional  $k$-vector space 
$W=\kk v_1\oplus  \kk v_2$ via the quotient map $F_1\lra F_1/\mathbf{m}F_1\cong W$. This  follows from the observation that only the trivial $\kk$-linear  combination of $v_1$ and $v_2$ is in the kernel of this bilinear form on $F_1'$. Space $W$ is naturally an $R$-module with the trivial action of $\mathbf{m}$ and the  quotient map is an $R$-module  map. 
Consequently, there is a surjection of $R$-modules 
$F_1\otimes_R  F_2 \lra W\otimes_{\kk} W$. 
Vector $v=v_1\otimes v_2 - v_2\otimes v_1 \in F_1\otimes_R F_2$ is nonzero, since its image in $W\otimes_{\kk} W$ is nontrivial. 
The  image of $v$ in  $F_{12}$ is trivial, since all inner products $(v_i\otimes v_j,v)_{12}$, $i,j=1,2$ are zero. 

\vspace{0.07in}  
    
The first example above can perhaps be lifted to the level of topological theories for $n=4$, by selecting $\alpha$ to be non-zero (and equal $\beta$, as above) only on two  carefully selected connected $4$-manifolds  $K_i=(-M_i)\cup_{N_i} M_i$, $i=0,1$ so that $\alpha_{N_0,N_1}$ is not injective. Manifolds $K_i$ and $N_i$ would need to satisfy a number of properties, including uniqueness of an embedding $N_i\subset K_i$ up to diffeomorphisms of $K_i$.    

Alternatively, if we allow suitable decorations of 2-manifolds, the  first  example can be lifted  to topological theories  in dimension two. Namely, one should require each component of a cobordism and of its boundary to carry a color, $0$ or $1$, so that the colors of a component and its boundary match. This is a modification of a cobordism category to non-interacting 2-color case. Closed oriented 2-manifolds are parametrized by  their genus and color. We set the evaluation to be $\beta$ on 2-spheres of either color and zero on all higher genus closed connected  surfaces.   
If  $N_i$, $i=0,1$ are circles of color $0$ and $1$,  respectively, 
the product map $\alpha_{N_0,N_1}$ can be identified with (\ref{eq_bilin_not_surj}) for the bilinear forms as above. The map is not injective then. 

Our introduction of two colors is a hack to disallow the tube  cobordism that connects two circles, since that would make the topological counterpart of the bilinear form $F_{12}$ nontrivial in the above example. A variation of this trick can be used to  implement the second example in decorated 2-dimensional topological theories as well, by introducing seamed circles on connected components and allowing colors of facets  to change upon crossing a  seamed circle.  

We feel compelled to point out possible non-injectivity of maps $\alpha_{N_0,N_1}$, even without producing interesting examples.

\vspace{0.1in} 


\subsection{Recursive sequences} \label{sec_recur}
$\quad$
\vspace{0.07in}

Below  we relabel $T$ into $z^{-1}$ making  $Z(T)=Z(z^{-1})$ a  power series in the latter variable. 

Consider the ring $R[z]$ of polynomials in a formal variable $z$ as well as the following $R[z]$-modules:
\begin{itemize}
    \item $R[z^{-1}]\undone$, with $z\undone=0$. These are polynomials in $z^{-1}$  with $z$ killing the element $\undone$ and otherwise acting  by  increasing the degree  of $z$ in $z^{-n}\undone$ by one. 
    \item $R((z^{-1}))$ the ring of Laurent power series in $z^{-1}$ with $z$  acting by the usual multiplication. As an $R[z]$-module it has a submodule $z R[z]$, with the quotient module that we write as  
    $R[[z^{-1}]]\undone,$ so there's an exact sequence of $R[z]$-modules 
    \begin{equation}
        0 \lra zR[z]\lra R((z^{-1})) \lra R[[z^{-1}]]\undone \lra 0 
    \end{equation}
    An element of $R((z^{-1}))$ decomposes uniquely as 
    $f= f_+ + f_-$, with $f_+\in zR[z]$ and  $f_-\in R[[z^{-1}]]$. When  we treat the latter as an $R[z]$-module, not as a ring, we write it as $R[[z^{-1}]]\undone.$ There is an inclusion of $R[z]$-modules 
    \begin{equation}
        R[z^{-1}]\undone \subset  R[[z^{-1}]]\undone  .
    \end{equation}
\end{itemize}
We can identify $R[z]$-modules 
\[
R[[z^{-1}]]\undone\cong (R[z])^{\ast}= \Hom_{R}(R[z],R).
\]
An element  of  the latter is determined  by a sequence $\ualpha=(\alpha_0,\alpha_1,\dots)$, $\alpha_n\in R$, describing the value of the  functional on $z^n$, $n\ge 0$. To $\ualpha$ we assign power series 
\[
Z(z^{-1})=\sum_{n\ge 0}\alpha_n z^{-n}.
\] 

An $R[z]$-module homomorphism 
\[ \gamma: R[z] \lra R[[z^{-1}]]\undone 
\] 
is determined by the image  of $1$, which can be any power series $Z(z^{-1})$ as above. We denote this homomorphism by $\gamma_{\undalpha}$. Theorem~\ref{thm_rational} from Section~\ref{subset_rat_th} can be restated as follows. 

\begin{prop}\label{prop_image_fd} Let $R$ be a field $\kk$. The image of $\kk[z]$ under $\gamma_{\undalpha}$ is a finite-dimensional $\kk$-vector space iff the power series $Z(z^{-1})$ is a rational function in $z^{-1}$. 
\end{prop}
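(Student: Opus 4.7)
The plan is to identify the image of $\gamma_{\undalpha}$ as a cyclic $\kk[z]$-module and then translate the rationality of $Z$ into the existence of a polynomial annihilator. First, since $\gamma_{\undalpha}$ is $\kk[z]$-linear with $\gamma_{\undalpha}(1) = Z(z^{-1})$, its image coincides with the cyclic submodule $\kk[z]\cdot Z(z^{-1})\subset \kk[[z^{-1}]]\undone$, which is isomorphic as a $\kk[z]$-module to $\kk[z]/\mathrm{Ann}(Z)$. Over the field $\kk$, this quotient is finite-dimensional if and only if the annihilator ideal is nonzero, i.e.\ if and only if there exists $P(z)\in\kk[z]\setminus\{0\}$ with $P(z)\cdot Z(z^{-1}) = 0$ in $\kk[[z^{-1}]]\undone$.

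Next, I unpack what this means in the quotient $\kk[[z^{-1}]]\undone = \kk((z^{-1}))/z\kk[z]$: the product $P(z)Z(z^{-1})$, computed inside $\kk((z^{-1}))$, vanishes modulo $z\kk[z]$ precisely when $P(z)Z(z^{-1}) = Q(z)$ for some $Q(z)\in z\kk[z]$. Equivalently, $Z(z^{-1}) = Q(z)/P(z)$ is a ratio of polynomials in $z$. Substituting $T = z^{-1}$ and multiplying numerator and denominator by a sufficiently high power of $T$ shows this is equivalent to $Z(T)$ being a rational function of $T$. In the converse direction, given $Z(T)=A(T)/B(T)$ with $A,B\in\kk[T]$ and $B(0)\ne 0$, one rewrites $Z(z^{-1}) = \tilde A(z)/\tilde B(z)$ with $\tilde A, \tilde B\in\kk[z]$ by clearing denominators; multiplying numerator and denominator by an additional factor of $z$ forces the numerator into $z\kk[z]$, and then $z\tilde B(z)$ is a nonzero annihilator of $Z(z^{-1})$ in $\kk[[z^{-1}]]\undone$.

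The main point requiring care is this bidirectional dictionary between polynomials in $z$ and polynomials in $T$, and in particular the condition that the numerator $Q(z)$ lie in $z\kk[z]$ rather than in all of $\kk[z]$; the extra factor of $z$ that one multiplies through in the converse direction is exactly what accounts for modding out by $z\kk[z]$ (rather than $\kk[z]$) in the definition of $\kk[[z^{-1}]]\undone$. Everything else is routine power-series manipulation.

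This route also reproves Theorem~\ref{thm_rational}: under the identification above, multiplication by $z$ on $\kk[[z^{-1}]]\undone$ implements the left shift $(\alpha_0,\alpha_1,\alpha_2,\dots)\mapsto(\alpha_1,\alpha_2,\alpha_3,\dots)$, so a relation $P(z)\cdot Z(z^{-1})=0$ with $P(z)=q_0+q_1z+\cdots+q_Nz^N$ is exactly the linear recurrence $\sum_i q_i\alpha_{m+i}=0$ that holds for all sufficiently large $m$, which is condition~(2) of Theorem~\ref{thm_rational}.
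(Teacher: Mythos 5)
Your proof is correct and self-contained, whereas the paper does not actually give a direct proof of this proposition: it cites Fuhrmann's book and notes that the statement is equivalent to Theorem~\ref{thm_rational}, whose equivalences (1)--(4) are in turn outsourced to Koblitz. Your route is genuinely different and exploits the module-theoretic framework the paper sets up in this section but does not itself use: you identify the image of $\gamma_{\undalpha}$ with the cyclic submodule $\kk[z]\cdot Z(z^{-1})\cong\kk[z]/\mathrm{Ann}(Z)$, invoke that $\kk[z]$ is a PID over a field so the quotient is finite-dimensional iff $\mathrm{Ann}(Z)\neq 0$, and then translate the condition $P(z)Z(z^{-1})\in z\kk[z]$, viewed inside $\kk((z^{-1}))$, directly into the rationality of $Z$. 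This makes the logical dependence on the annihilator ideal transparent and gives a one-paragraph argument where the paper's chain of references is substantially longer. One small imprecision in your closing remark: $P(z)\cdot Z(z^{-1})=0$ in $\kk[[z^{-1}]]\undone$ means the recurrence $\sum_i q_i\alpha_{m+i}=0$ holds for \emph{all} $m\geq 0$, which is a priori stronger than condition~(2) of Theorem~\ref{thm_rational} (which asks only for $m\gg 0$); the two conditions yield the same nonvanishing of the annihilator because one can replace $P(z)$ by $z^M P(z)$ to absorb finitely many initial failures of the recurrence. Making that replacement explicit would be needed if you want the argument to literally "reprove" Theorem~\ref{thm_rational} as stated, but it does not affect the correctness of your proof of Proposition~\ref{prop_image_fd} itself.
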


We refer to~\cite[Section 8.3.1]{Fh} for a proof in this language. 
Replacing $z$ with $T^{-1}$, the image  $\gamma_{\undalpha}(\kk[z])$ is naturally isomorphic to $A(1)$ in the theory assigned to $\undalpha$, so that Proposition~\ref{prop_image_fd} is equivalent 
to Theorem~\ref{thm_rational}. 

\vspace{0.07in} 

An equivalent  characterization of such homomorphisms is that the  sequence $\undalpha=(\alpha_0,\alpha_1,\dots)$ is eventually  recurrent, that is, there exist $N,M>0$ and $q_0,\dots, q_{N-1}\in \kk$  with 
\begin{equation}\label{eq_LRS}
    \alpha_{m+N} = q_0\alpha_m + q_1 \alpha_{m+1}+ \dots + q_{N-1}\alpha_{m+N-1} 
\end{equation}
for all $m\ge M$. Let us  call homomorphisms satisfying this  finite-dimensionality condition \emph{recurrent homomorphisms}. 
Sequences as in (\ref{eq_LRS}) are known as \emph{linearly recursive sequences}. They are important in number theory, in control theory, and in many other  fields~\cite{EPSW,Fa}. 

\vspace{0.07in} 

The  space of recurrent homomorphisms $\gamma_{\undalpha}$, in  addition to the Kronecker classification via rational functions, can be  described as the space  of representative  functions on $\kk[z]$. 
Given an associative algebra $B$ over a field $\kk$ with  the multiplication
$m:B\otimes B \lra B$, it can be dualized to obtain a map 
\[ m^{\ast}: B^{\ast}\lra (B\otimes B)^{\ast}
\] 
The latter space contains $B^{\ast}\otimes B^{\ast}$ as a subspace, which is proper iff $B$ is infinite-dimensional. We say that $f\in B^{\ast}$ is \emph{representative} if 
$m^{\ast}(f)$ is in $ B^{\ast}\otimes B^{\ast} $ rather  than just being an element of the bigger space $(B\otimes B)^{\ast}$: 
\[ m^{\ast}(f)\in B^{\ast}\otimes B^{\ast} \subset (B\otimes B)^{\ast}
\] 
The set of representative functions is denoted $B^{\circ}$. It's a vector subspace of $B^{\ast}$ and is naturally a coalgebra. $B^{\circ}$ consist  of functionals on $B$ whose kernel contains an ideal of finite codimension in $B$. If $B$ is a bialgebra (or a Hopf algebra), $B^{\circ}$ is a bialgebra (resp. a Hopf algebra) as well~\cite[Section 1.5]{DNR}, known as  the \emph{Sweedler dual} of $B$, or the \emph{finite  dual}, the \emph{dual bialgebra}, or the  \emph{bialgebra  of representative functions}~\cite[Section 9.1]{Mn}. 

When  $B=\kk[z]$ is a polynomial algebra on one  generator, representative functions are in a bijection with recurrent  sequences, thus also in a bijection with rational functions. 

Bialgebra structures  on $\kk[z]$ (with $\Delta(z)=1\otimes z + z\otimes 1$ or $\Delta(z)=z\otimes z$) give rise to  bialgebra structures on the coalgebra  $\kk[z]^{\circ}$ of representative functions~\cite{CG,PT,LT}.  
When $\kk$ is algebraically closed, the numerator  and denominator  of a rational function factor into linear terms, and a basis in $\kk[z]^{\circ}$ can be written down  explicitly, see~\cite[Example 9.1.7]{Mn}, \cite{VS} and references therein.   

The notion of the finite dual is more subtle for a bialgebra over a commutative ring $R$ than over a field; the case of $R[x]^{\circ}$ is discussed in~\cite{AGTW,Ku}, see also~\cite[Section 9]{Ha}. 

Monograph~\cite{EPSW} details arithmetic properties or  recurrent sequences, also see~\cite{Al} and follow-up papers for connections to K-theory and~\cite{LB} and references therein for connections to arithmetic geometry. 

\vspace{0.07in} 

Notice that the polynomial $r_{M,N}$ in (\ref{eq_r_N_M}),  which describes the extension $R\subset A(1)$, consists of two factors, the second of which is the monomial $x^{\max(N+1,M)-M}$ that depends  only on degrees $M$ and $N$. It's natural to distinguish two  cases  
\begin{itemize}
    \item $N<M$, that is, when the numerator of  the  rational series $Z(T)$ has lower degree than than the denominator, $\deg(P(T))<\deg(Q(T))$. The second  factor  is not present. 
    \item $N\ge M$, which is the opposite case, $\deg(P(T))\ge \deg(Q(T))$. The second factor  is $x^{N+1-M}$. 
\end{itemize}

The first case is the one  more commonly encountered to date in the literature on commutative Frobenius extensions. 
In particular, it  appears in Jouve and  Rodrigues Villegas~\cite[Section 2]{JRV} via the notion of a monogenic Frobenius algebra. The authors also provide a classification of isomorphism classes of such Frobenius extensions $\kk\subset A(1)$  via rational functions that are zero at infinity. We  refer the reader to the latter paper, Furhmann~\cite{Fh}, and various papers in control theory,  see~\cite{Fa} and references therein, for details and also for the connection to the Bezoutians. 

Note, though, that a rational series $Z(T)$ determines not only a Frobenius extension  $\kk\subset  A(1)$ but   the entire  collection of $A(k)$, over $k\ge 0$, with various maps between them induced  by 2D cobordisms,  including  the symmetric group $S_k$ action on $A(k)$ and tensor  product  maps (\ref{eq_hom_parallel}).

\vspace{0.1in} 

%
%

\section{Overlapping theta-foams and the Sergeev-Pragacz formula}\label{sec_SP_formula}

{\it  Sergeev-Pragacz formula.}
We recall the basics  of  the supersymmetric Schur  functions and  the Sergeev-Pragacz formula,  following~\cite{MJ1,Mo}  and  using  variables $x_i$ and $y_j$'s in place of $\gamma_i$ and $\beta_j$. Fix $M,N\ge 0$ and consider the polynomial ring  $R'_{M,N}=\kk[x_1,\dots, x_M,y_1,\dots,y_N]$. The complete supersymmetric function 
\[ h_n(x/y) = \sum_i h_{n-i}(x)e_i(y) 
\] 
is  defined in terms of complete functions in the variables $x$ and elementary  functions in the $y$'s.  By a supersymmetric (or an $(M,N)$-hook) partition $\lambda$ we mean a partition 
\[ \lambda= (\lambda_1,\dots, \lambda_p) 
\]
that fits into an $(M,N)$-hook, see Figure~\ref{fig_5_1}. This means $\lambda_{M+1}\le N$.  Partitions that don't fit into this hook can also  be  considered, but then the corresponding  supersymmetric  Schur  function equals zero. 

\begin{figure}[h]
\begin{center}
\includegraphics[scale=1.0]{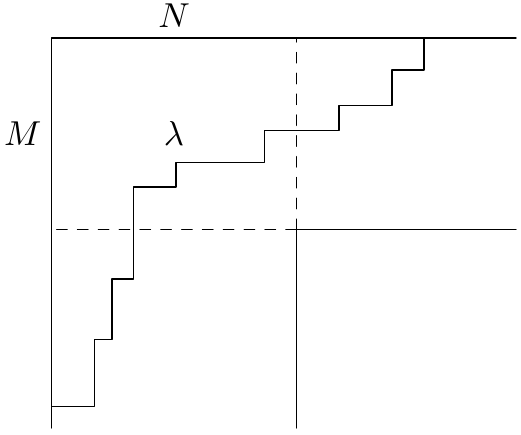}
\caption{An  $(M,N)$-hook partition $\lambda$.}
\label{fig_5_1}
\end{center}
\end{figure}

The hook (or supersymmetric) Schur function  $s_{\lambda}(x/y)$ is defined as the  determinant
\begin{equation} \label{eq_ss_jt}
  s_{\lambda}(x/y) = \det(h_{\lambda_i-i+j(x/y)})_{i,j\le \ell(\lambda)}.
\end{equation}  
This definition works for  any partition  but gives the zero function if $\lambda$ does not  fit  into the hook. Alternatively, one can define $s_{\lambda}(x/y)$ as the  character of a suitable irreducible representation of the Lie superalgebra $\mathfrak{gl}(M|N)$, and then (\ref{eq_ss_jt}) can be viewed as the supersymmetric Jacobi-Trudi formula. 

For $\lambda$ as above, define partitions $\kappa$, $\tau$ and  $\eta$ as shown in Figure~\ref{fig_5_2}.
Denote by $\kappa$ the intersection of $\lambda$  with the $M\times N$ rectangle. What's left after deleting  $\kappa$ from $\lambda$ are the two partitions $\tau$ and $\eta$. Partition $\tau$ is the part of $\lambda$ to the right of the rectangle, while $\eta$ is the part of $\lambda$ below the rectangle. Either one or both may be the  empty partition. 

\begin{figure}[h]
\begin{center}
\includegraphics[scale=1.0]{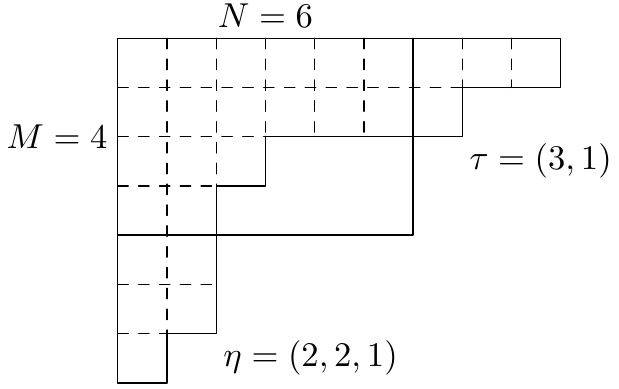}
\caption{Partitions $\kappa$, $\tau$ and  $\eta$ associated to $\lambda=(9,7,3,2,2,2,1)$, with $M=4$ and  $N=6$. Partition  $\kappa=(6,6,3,2)$ is the intersection of $\lambda$ with the  $M\times N$ rectangle. }
\label{fig_5_2}
\end{center}
\end{figure}
For example, for $M=4,N=6$ and $\lambda=(9,7,3,2,2,2,1)$ we have $\kappa=(6,6,3,2)$, $\tau = (3,1),  \eta=(2,2,1),$ see Figure~\ref{fig_5_2}. 

The  Sergeev-Pragacz formula for $s_{\lambda}(x/y)$ is the following, see~\cite{MJ1,Mo}: 
\begin{equation}\label{eq_sp} 
    s_{\lambda}(x/y) = D_0^{-1} \sum_{\sigma \in S_M\times S_N} 
    \mathrm{sgn}(\sigma) \cdot 
    \sigma\left( 
   x^{\tau+\delta_M}y^{\eta'+\delta_N}\prod_{(i,j)\in \kappa} (x_i + y_j) \right) . 
\end{equation}
Here $(i,j)\in \kappa$ iff the box with the row  index $i$ and column  index $j$ belongs  to $\kappa$, and 
\[ D_0 = \prod_{1\le i<j\le M}(x_i-x_j)  
   \prod_{1\le i<j\le N}(y_i-y_j).  
\]
Partition $\delta_M=(M-1,M-2,\dots, 1,0)$ and likewise for $\delta_N$. We denote 
\[ x^{\tau+\delta_M} = x_1^{\tau_1+M-1}x_2^{\tau_2+M-2}
\dots x_M^{\tau_M}
\]
and likewise for $y^{\eta'+\delta_N}$, where  $\eta'$ is the conjugate partition of $\eta$. For $\eta=(2,2,1)$ as above, $\eta'=(3,2)$.

The formula  simplifies when  $\lambda_M\ge N$ and becomes 
\begin{equation}\label{eq_ss_prod} 
    s_{\lambda}(x/y) = s_{\tau}(x)s_{\eta'}(y)\prod_{i=1}^M \prod_{j=1}^N (x_i+y_j). 
\end{equation}
If $\lambda_{M+1}>N$, that is, $\lambda$ does not fit into the $(M,N)$-hook, the supersymmetric Schur function $s_{\lambda}(x/y)=0$. 

\vspace{0.1in}

{\it Overlapping foams.} We assume familiarity with $GL(N)$ foam evaluation, as developed  by Robert and Wagner~\cite{RW1}, also see~\cite[Section 1.2]{KK} for an introduction. In Robert-Wagner theory, a  closed $GL(N)$  foam in $\R^3$ evaluates to a symmetric polynomial in $x_1,\dots, x_N$. Let us explain a naive extension of Robert-Wagner evaluation to two sets  of variables that produces a supersymmetric Schur function for 
a configuration of two overlapping theta-foams.  

Consider a configuration  $F=(F',F'')\subset \R^3$ of a $GL(M)$ foam $F'$ and a $GL(N)$ foam $F''$ in  $\R^3$ that may intersect generically. By a generic intersection we mean the  following.  Choose  any admissible coloring $c'$  of $F'$ and  $c''$ of $F''$. To the coloring $c'$ there is associated a closed surface $F'_i(c')\subset \R^3$ for $1\le i\le M$ which consist of all facets  of $F'$ that contain color $i$ in the coloring  $c'$. Likewise, to $c''$ there is associated closed  surface $F''_j(c'')$, the  union of all facets of $F''$ that contain color $j$ in $c''$. We require that surfaces $F'_i(c')$ and $F''_j(c'')$ intersect generically, along a finite union of circles, for all $i,j,c',c''$ as above.  

An alternative definition requires first defining $GL(N)$ foams $F$ so that at each point $p\in F$  there is a well-defined  tangent plane, including when  $p$  is on  a  seam of $F$ or $p$ is a singular vertex of $F$. The definition  can  be found in~\cite{RW2}, for  instance. Conceptually, one  requires that along each seams of $F$, the thicker facet splits smoothly into two thinner  facets, so that near the seam the  thinner  facets stay  infinitesimally close  to each other.  The same definition ensures 'smoothness' and  a well-defined tangent plane near each singular vertex of $F$. 

With the alternative definition at hand, by an $(M,N)$-foam pair $F=(F',F'')$ we mean a configuration of possibly overlapping $GL(M)$, respectively $GL(N)$, foams $F', F''$ such  that at each intersection  point $p\in F'\cap  F''$ the tangent  planes $T_p(F')$ and $T_p(F'')$ are in general position, that  is, intersect along a  line, 
\[ \dim_{\R}(T_p(F')\cap T_p(F'')) = 1. 
\] 
Given an $(M,N)$-foam $F$, we allow dots of $F'$ and $F''$ to float smoothly on facets of  $F'$ and $F''$ and cross over intersection lines $F'\cap  F''$, as long as each dot stays on  its own facet. Likewise, we allow deformations of $F'$ and $F''$ relative to each other, as long  as at each moment  of the deformation $F'$ and  $F''$ intersect generically as defined above. 

Given two smooth closed surfaces $S',S''\subset \R^3$ that intersect generically in this sense, the intersection $S'\cap  S''$ is a union of  finitely many circles. Define the intersection index $\iind(S',S'')$  as the number of circles in the intersection. The intersection index is symmetric and additive with respect to decomposing $S'$ and  $S''$  into their connected components. Isotopy of  $S'$ and $S''$ that keeps them  intersecting generically at each moment does not change the index. 

We now define evaluation $\angf{F,c}$, where $c=(c',c'')$ consists of a $GL(M)$ coloring $c'$  of $F'$ and a $GL(N)$ coloring $c''$ of $F''$,  by  
\begin{equation} \label{eq_overlap_eval}
    \angf{F,c} = \angf{F',c'} \angf{F'',c''}\cdot \prod_{i=1}^M\prod_{j=1}^N (x_i+y_j)^{\iind(F'_i(c'),F''_j(c''))}.
\end{equation}
Here $\angf{F',c'}$, respectively  $\angf{F'',c''}$,  is the Robert-Wagner evaluation of the $GL(M)$ foam $F'$ at its coloring $c'$, respectively evaluation of the $GL(N)$ foam $F''$ at coloring $c''$. The new term in the formula counts the number of intersection circles of surfaces $F'_i(c')$ and $F''_j(c'')$ and puts it in the exponent  of $x_i+y_j$. 

We also call such $c=(c',c'')$ an (admissible) coloring  $c$ of $F$ or a $GL(M|N)$ coloring of $F$. 
Define the evaluation of $F$ by 
\begin{equation}
    \angf{F} =\sum_c \angf{F,c}, 
\end{equation}
the sum over all admissible coloring $c$ of $F$, that is, over all pairs $(c',c'')$ of colorings as above. 

\vspace{0.1in} 

{\it Theta foam and Schur functions.}
Let $\mu=(\mu_1,\dots, \mu_M)$ be a partition with at most $M$ parts. 
By a $GL(M)$ theta-foam $\Theta_{\mu}$ we mean the $GL(M)$ foam with one disk $M$-facet and $M$ disk $1$-facets attached to it along the common singular circle~\cite{KK}, see Figure~\ref{fig_5_3} left. On the right of the figure we depicted the central cross-section of the foam. The foam can be reconstructed from its cross-section  by taking the suspension of this  diagram and "smoothing out" the north and  south poles of the resulting 2-dimensional CW-complex.  Upon suspension, each interval becomes a disk facet in  the  foam and  the pair of vertices of the cross-section where  thin edges meet the edge  of thickness  $M$ turn into the singular circle of the foam.  

\begin{figure}[h]
\begin{center}
\includegraphics[scale=1.0]{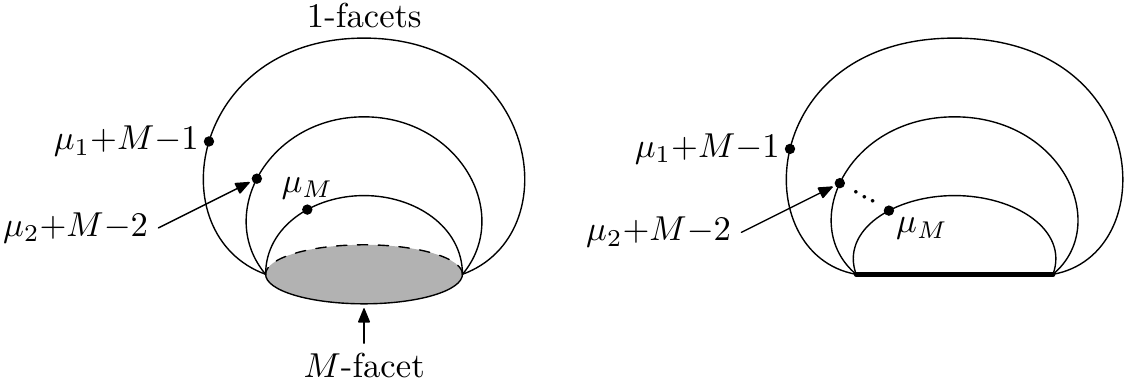}
\caption{$GL(M)$ theta-foam $\Theta_{\mu}$ and  its central cross-section.}
\label{fig_5_3}
\end{center}
\end{figure}
We put $\mu_1+M-1,\mu_2+M-2,\dots, \mu_M$ dots on thin facets as we go cyclically around the singular circle. 
It's immediate to see that, up  to overall sign $(-1)^{M(M-1)/2}$ that depends on the orientation of the foam, theta-foam evaluates to the Schur polynomial $s_{\mu}(x_1,\dots, x_M)$, 
\begin{equation}
    \angf{\Theta_{\mu}} = \pm\sum_{\sigma\in S_M}
    \mathrm{sgn}(\sigma)\frac{x_{\sigma(1)}^{\mu_1+M-1} 
    x_{\sigma(2)}^{\mu_2+M-2}\dots x_{\sigma(M)}^{\mu_M}}{\prod_{i<j}(x_i-x_j)}=\pm s_{\mu}(x_1,\dots, x_M).
\end{equation}
This follows  at once  from the Robert-Wagner evaluation  formula.  Theta-foam admits $M!$ colorings, over all ways to label $M$ thin facets by $\{1,\dots, M\}$. Each surface $F_{ij}(c)$ is homeomorphic to the   2-sphere, contributing  $x_i-x_j$ to the  denominator. The formula follows. 

\vspace{0.1in} 

{\it Overlapping theta-foams.}
Take an  $(M,N)$-supersymmetric partition $\lambda$ as above and consider associated partitions $\kappa,\tau,\eta$ as described earlier. To a partition $\kappa$ we associate a configuration of overlapping $GL(M)$ and  $GL(N)$ theta-foams $F'$ and $F''$ as follows. Foams overlap only along 1-facets. Label 1-facets by $f'_1,\dots, f'_M$ and $f''_1,\dots, f''_N$ going around singular circles of $F'$ and $F''$ in the opposite directions for the two foams. Form the intersection where facets $f'_i$ and  $f''_j$ intersect along a circle iff  the square $(i,j)$ belongs to the  partition $\kappa$. 

The  network of intersections is depicted schematically in Figure~\ref{fig_5_4}, via the intersection  of $GL(M)$ and $GL(N)$ theta graphs. Facets  are represented  as edges, and circles of intersection correspond to  pairs of  opposite  intersection points. 

\begin{figure}[h]
\begin{center}
\includegraphics[scale=1.0]{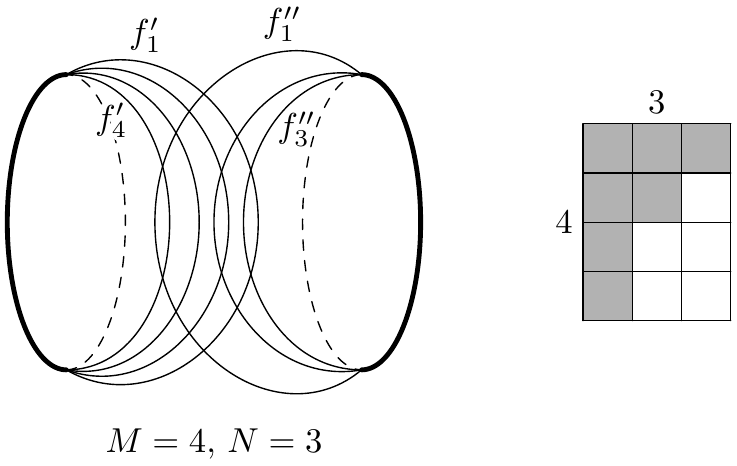}
\caption{Middle cross-section of the $GL(4,3)$ theta foam with the partition $\kappa=(3,2,1,1)$. Pairs  of intersecting edges are  in a bijection with squares of  $\kappa$. Edge representing facet $f'_1$ intersects edges  for  the  three facets $f''_1,f''_2,f''_3$, corresponding to  $\kappa_1=3.$  Edge for $f'_2$  intersects $f''_1,f''_2$, which corresponds to $\kappa_2=2$.  Edges for $f'_3,f'_4$ each intersect $f''_1$ only, and  $\kappa_3=\kappa_4=1$.   }
\label{fig_5_4}
\end{center}
\end{figure}

The foam can be reconstructed from this cross-section by taking the suspensions of the $GL(M)$ and $GL(N)$ theta-graphs and "smoothing  out" two north and two  south poles of the suspensions. Each edge becomes a disk facet and pairs of opposite intersection points turn  into singular circles along which foams $F'$ and $F''$ overlap.  The number of singular circles  equals the size of the  partition $\kappa$. 

\vspace{0.1in}

Recall the  partition 
\[  \tau+\delta_M = (\tau_1 +M-1,\tau_2+M-2, 
\dots, \tau_{M-1}+1,\tau_M)
\]
with at most $M$ parts. We put $\tau_i+M-i$ dots on facets  $f'_i$, $1\le i \le M$. Notice that these $M$ facets necessarily carry distinct numbers of dots. 
Likewise, the partition
\[\eta'+\delta_N = (\eta'_1 +N-1,\eta'_2+N-2, 
\dots, \eta'_{N-1}+1,\eta'_N)
\] 
has at most $N$ parts. We put $\eta'_j+N-j$ dots on facets $f''_j$, $1\le j\le N$. Again, these  $N$ facets 
all carry different numbers of dots. 
An example of overlaps and distribution of dots on facets is shown in Figure~\ref{fig_5_5} for the partition from Figure~\ref{fig_5_2}. 
\begin{figure}[h]
\begin{center}
\includegraphics[scale=1.0]{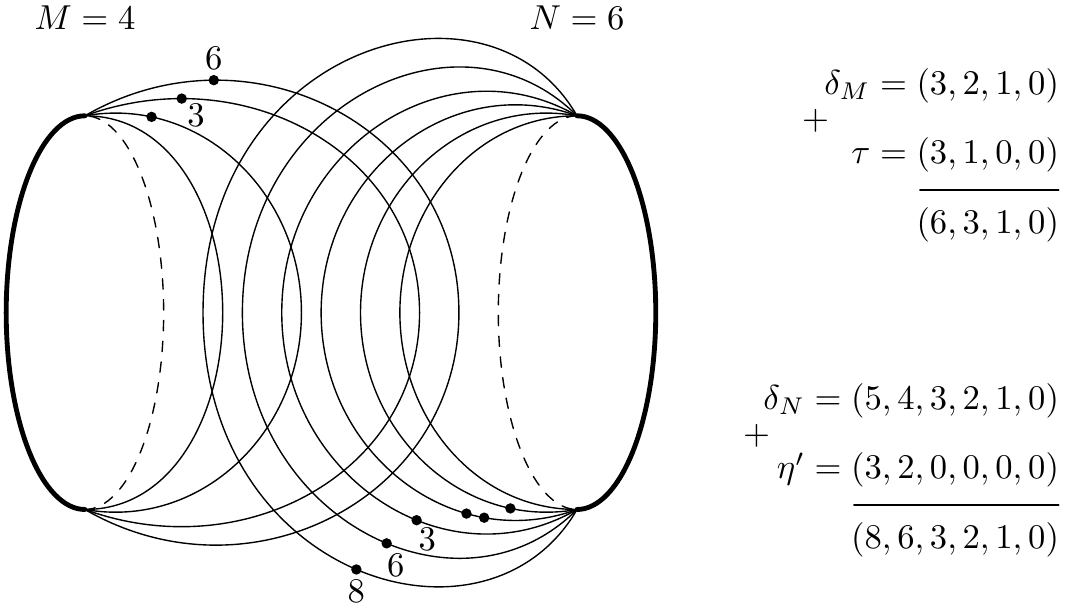}
\caption{$GL(4,6)$ theta-foam for the  partition $\lambda=(9,7,3,2,2,2,1)$ in Figure~\ref{fig_5_2}; $M=4,N=6$. }
\label{fig_5_5}
\end{center}
\end{figure}

Denote this $GL(M|N)$ foam by $\Theta_{\lambda}$. 
\begin{prop} Up to an overall sign, foam $\Theta_{\lambda}$ evaluates to the supersymmetric Schur function, 
  $\angf{\Theta_{\lambda}} = \pm s_{\lambda}(x/y). $
\end{prop}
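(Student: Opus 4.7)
I would proceed by direct calculation: expand $\angf{\Theta_\lambda}$ as a sum over $GL(M|N)$-colorings via (\ref{eq_overlap_eval}), evaluate each factor, and identify the result with the Sergeev-Pragacz formula (\ref{eq_sp}).

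An admissible coloring $c=(c',c'')$ of $\Theta_\lambda$ is determined by independent admissible colorings of the two theta-foams, each of which is simply a bijection between the thin facets and the colors. Thus colorings are indexed by pairs $(\sigma,\rho)\in S_M\times S_N$, where $\sigma(i)$ is the color of $f'_i$ and $\rho(j)$ is the color of $f''_j$. For such a coloring, the Robert-Wagner evaluations of the two theta-foams give (up to overall signs independent of $\sigma,\rho$)
\[ \angf{F',c'}\,\angf{F'',c''} \;=\; \frac{\prod_{i=1}^M x_{\sigma(i)}^{\tau_i+M-i}\cdot \prod_{j=1}^N y_{\rho(j)}^{\eta'_j+N-j}}{\prod_{1\le i<j\le M}(x_{\sigma(i)}-x_{\sigma(j)})\cdot \prod_{1\le i<j\le N}(y_{\rho(i)}-y_{\rho(j)})}, \]
the numerators reflecting the dot placements $\tau_i+M-i$ on $f'_i$ and $\eta'_j+N-j$ on $f''_j$ specified in the construction of $\Theta_\lambda$.

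Next I would compute the intersection factor. By construction, big facets are disjoint from the other foam, and the thin-facet intersection pattern is prescribed: $f'_i$ and $f''_j$ meet along a single circle iff $(i,j)\in\kappa$. Hence the 2-sphere $F'_k(c')$ (formed from the big $M$-facet together with the thin facet $f'_{\sigma^{-1}(k)}$) meets $F''_l(c'')$ (built analogously from $f''_{\rho^{-1}(l)}$) in exactly one circle when $(\sigma^{-1}(k),\rho^{-1}(l))\in\kappa$ and is disjoint otherwise. Reindexing the double product over $(k,l)$ by $(i,j)=(\sigma^{-1}(k),\rho^{-1}(l))$ collapses the intersection factor to
\[ \prod_{k,l}(x_k+y_l)^{\iind(F'_k(c'),F''_l(c''))} \;=\; \prod_{(i,j)\in\kappa}(x_{\sigma(i)}+y_{\rho(j)}). \]

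Finally, summing over $(\sigma,\rho)\in S_M\times S_N$ and applying the identity $\prod_{i<j}(x_{\sigma(i)}-x_{\sigma(j)})=\mathrm{sgn}(\sigma)\prod_{i<j}(x_i-x_j)$ to each Vandermonde denominator pulls the constant factor $D_0$ out of the sum and leaves $\mathrm{sgn}(\sigma,\rho)$ acting on the fixed expression $x^{\tau+\delta_M}y^{\eta'+\delta_N}\prod_{(i,j)\in\kappa}(x_i+y_j)$. The result is precisely the right-hand side of (\ref{eq_sp}), giving $\angf{\Theta_\lambda}=\pm s_\lambda(x/y)$, with the overall sign absorbed into the orientation conventions of the individual theta-foam evaluations. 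The main obstacle is the geometric bookkeeping: one has to verify that the cross-section construction of Figure~\ref{fig_5_4} can be realized as a bona fide $(M,N)$-foam in $\R^3$ satisfying tangential general position, with exactly the prescribed single-circle intersections between thin-facet pairs $(f'_i,f''_j)$ for $(i,j)\in\kappa$ and no extraneous intersections involving big facets. Once that count is secured, the rest is a routine Weyl-type symmetrization.
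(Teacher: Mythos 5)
Your proposal is correct and follows essentially the same route as the paper's own (very brief) proof: expand the evaluation over colorings indexed by $S_M\times S_N$, read off the dot contributions and the intersection factor $\prod_{(i,j)\in\kappa}(x_{\sigma(i)}+y_{\rho(j)})$, and match the resulting sum with the Sergeev--Pragacz formula, with the signs coming from the Vandermonde denominators (equivalently, from the seam-circle signs in the Robert--Wagner evaluation).
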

\emph{Proof} is immediate from  the definition of evaluation. The  sum is  over  colorings  $c'$ and $c''$ of $F'$ and $F''$, respectively. 
Colorings $c'$ correspond  to  elements $\sigma'$ of the  symmetric  group  $S_M$, with facet $f'_i$ colored by $\sigma'(i)$. 
Colorings  $c''$ correspond to elements $\sigma''\in S_N$, with facet  $f''_j$ colored  by $\sigma''(j)$. Intersecting facets $f'_i$ and $f''_j$ contribute $x_{\sigma'(i)}+y_{\sigma''(j)}$ to the expression. 
All surfaces $F'_{ik}(c')$, $1\le i<k\le M$ and $F''_{j\ell}(c'')$, $1\le j<\ell\le N$ are spheres, contributing denominator  $D_0$ to the evaluation, see formula (\ref{eq_sp}). 
The sign $\mathrm{sgn}(\sigma)=\mathrm{sgn}(\sigma')\mathrm{sgn}(\sigma'')$ for $\sigma=(\sigma',\sigma'')$ as in (\ref{eq_sp}) comes from the  count of positive seam circles in these surfaces. 
Consequently, the Sergeev-Pragacz expression coincides with  the evaluation of $GL(M|N)$ foam  $F=(F',F'')$, implying that $\angf{F}=\pm s_{\lambda}(x/y).$ 
Choice of sign depends  on orientations of $F',F''$. Reversing the orientation of $F'$, respectively $F''$, multiplies  the evaluation by  $(-1)^{M(M-1)/2}$, respectively $(-1)^{N(N-1)/2}$. 
$\square$

\vspace{0.1in} 

{\it Overlapping 2-spheres.}
When $\lambda$ contains the $M\times N$ rectangle the formula simplifies and gives the product of Schur functions for $\tau$ and  $\eta$ and linear terms $x_i+y_j$, see equation (\ref{eq_ss_prod}). This  condition on $\lambda$ is equivalent  to the condition that foams $F'$ and $F''$ have maximal overlap. That  is, every pair of 1-facets $(f'_i,f''_j)$ intersects in a circle. When this happens, 
foams $F'$ and  $F''$ can be deformed relative to each other without changing the evaluation so that they intersect along a single circle in the $M$-facet  of $F'$ and $N$-facet of  $F''$. The stack of 1-disks on $F'$ with dots reduces to a single dot on the $M$-facet labelled by the Schur function $s_{\tau}(x)$. The corresponding  stack of 1-disks  on $F''$ reduces to the dot on the $N$-facet labelled by $s_{\eta'}(y)$. Overlapping foams  then reduce to overlapping 2-spheres of thickness $M$  and  $N$ with Schur functions dots on them, see Figure~\ref{fig_5_6} left. In general, a dot on an  $M$-facet may be labelled by a symmetric function in  $M$ variables, usually homogeneous so that the foam has a  well-defined degree. 

\begin{figure}[h]
\begin{center}
\includegraphics[scale=1.0]{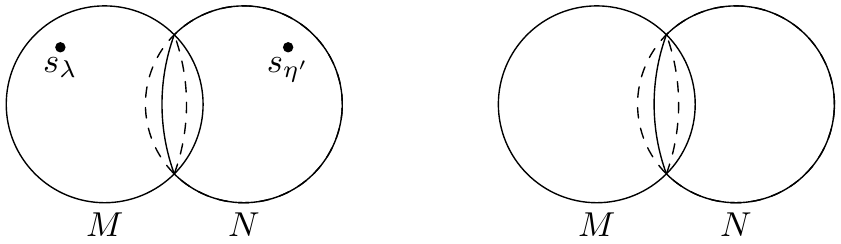}
\caption{On the left: overlapping 2-spheres of thickness $M$ and  $N$ with dots. On the right: same 2-spheres without dots. }
\label{fig_5_6}
\end{center}
\end{figure}

\vspace{0.07in}

{\it Foam evaluations and resultants.}
Without dots, two overlapping $GL(M)$ and $GL(N)$ 2-spheres of maximal thickness ($M$ and  $N$, correspondingly) will evaluate to the product 
\begin{equation}\label{eq_product} 
    \angf{(F',F'')} = \prod_{i=1}^M \prod_{j=1}^N (x_i+y_j). 
\end{equation}
If we change variables $y_j$ to $-y_j$ (or keep the variables and instead change contributions of circle overlaps between surfaces from $x_i+y_j$ to $x_i-y_j$), 
the product on the right can be  interpreted as the resultant of two polynomials. 

Namely, the evaluation takes value in the tensor product ring $\symf_M(x)\otimes \symf_N(y)$ of symmetric polynomials. 
Elements $x_1,\dots, x_M$ of the ring $\kk[x_1,\dots, x_M]$ are roots of the following degree $M$ polynomial with coefficients  in $\symf_M(x)$
\[  f_M(x)= x^M - e_1 x^{M-1}+e_2 x^{M-2}- \dots +(-1)^M e_M,
\] 
where $e_k$ is the  $k$-th  elementary symmetric  function in $x_1, \dots, x_M$.
Elements $y_1,\dots, y_N$ of the ring $\kk[y_1,\dots, y_N]$ are roots of the following degree $N$ polynomial with coefficients  in $\symf_N(y)$
\[  \overline{f}_N(y)= y^N - \ove_1 y^{M-1}+\ove_2 y^{N-2}\dots -\dots +(-1)^N \ove_N,
\] 
where $\ove_k$ is the  $k$-th  elementary symmetric  function in $y_1, \dots, y_N$.

The resultant of $f_M$ and $\overline{f}_N$ is 
\[ \mathrm{Res}(f_M,\overline{f}_N) = \prod_{i=1}^M \prod_{j=1}^N (x_i-y_j) \in \symf_N(x)\otimes \symf_M(y), 
\] 
the product  of differences of pairs of roots of $f_M$ and $\overline{f}_N$.  Individual terms in the product 
belong to the larger ring $\kk[x_1,\dots,x_M,y_1,\dots,y_N] .$ Adding a minus sign to indicate that we use $x_i-y_j$ factors in our evaluation of overlapping foams instead of  $x_i+y_j$ above, we can write 
\begin{equation}\label{eq_resultant} 
    \angf{(F',F'')}_- =\mathrm{Res}(f_M,\overline{f}_N). 
\end{equation}

It would be interesting to see whether this analogy between foam evaluation and resultants can be pushed further. 

\vspace{0.07in} 

There are  well-known analogies between knots and 3-manifolds on one side and primes and number fields (and functional fields) on the other side, see~\cite{KaS,Mr1,Mr2,Ma} and references therein, sometimes referred  to as \emph{arithmetic topology}. 
In that analogy, resultants of pairs of polynomials, quadratic residues, and their generalizations play the 
role of linking numbers. In formula (\ref{eq_resultant}) the resultant is the evaluation of overlapping foams that are 2-spheres of maximal thickness ($M$ and $N$, correspondingly). These two 2-spheres in $\R^3$ overlap along  a circle, and there is an intuitive way in which they may be viewed as linked. One  may hope that developing connections between this type of foam evaluations for linking of foams in $\R^3$, resultants, and related structures may provide an additional outlook on fascinating yet mysterious 3-manifolds vs number fields analogy (the observation that objects in both stories have homological dimension three provides partial but not fully satisfactory  
explanation for some of the similarities). 

\vspace{0.07in} 

A more immediate question for the evaluation in (\ref{eq_overlap_eval}) is whether it's integral for more general foams. A modification of the Robert-Wagner evaluation studied in~\cite{KKKo} restricts to an interesting integral evaluation for the theta-foam, but is not integral on all foams, and this situation may occur with evaluation (\ref{eq_overlap_eval}) as well. In both cases one can then ask whether integrality can be restored for evaluations of arbitrary foams via a more subtle formula. 

\vspace{0.1in} 

{\it Multi-type overlapping foams and their evaluations:}
From the definition of overlapping foam evaluation given by formula (\ref{eq_overlap_eval}) one derives the skein relation in Figure~\ref{fig_5_7} for pulling apart 1-facets of different types. 

\begin{figure}[h]
\begin{center}
\includegraphics[scale=1.0]{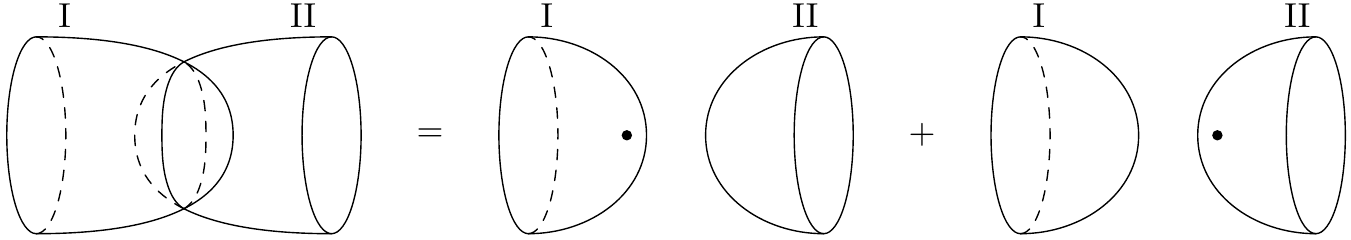}
\caption{Skein relation for putting apart overlapping 1-facets of different types. }
\label{fig_5_7}
\end{center}
\end{figure}

Figure~\ref{fig_5_7} relation is reminiscent of one of the defining relations in the  categorified quantum groups for strands associated to simple roots connected by a single edge in the Cartan graph, see Figure~\ref{fig_5_8}.

\begin{figure}[h]
\begin{center}
\includegraphics[scale=1.0]{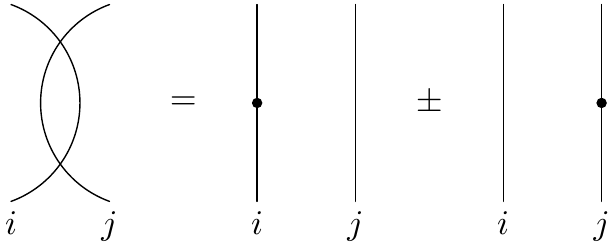}
\caption{A defining relation in the categorified quantum $sl(3)$, see~\cite{KL1,R}. If the minus sign is used, the edge connecting $i$ and $j$ in the Coxeter graph needs to be oriented~\cite{BK}.}
\label{fig_5_8}
\end{center}
\end{figure}

This is likely not a coincidence. There is a relation between the quotients of the categorified quantum $GL(n)$ calculus and $GL(k)$ foam theory, for different $n$ and $k$, see~\cite{MSV,LQR,QR}. We expect a similar lifting of categorified quantum $GL(n)$ diagrammatical calculus into a version of $GL(M|N)$ foams and their generalizations. 

Notice that components in $GL(M|N)$ foams are split into two types and colorings of components happen independently, with $M$ and $N$ colors used in components of type $I$ and $II$, respectively. There is a natural generalization where one starts with a simply-laced graph $G$ and an assignment of non-negative integers $N_i$ to vertices $i$ of $G$.
One then considers overlapping foams where components are labelled by vertices $i$ of graph $G$, and  facets of $i$-components may have thickness from $1$ to $N_i$. One uses $N_i$ colors $\{1,\dots, N_i\}$ and subsets of this set to  color facets of $i$-components. In the evaluation one uses variables $x_{i,1},\dots, x_{i,N_i}$ to write down contributions from $i$-components. 

\vspace{0.1in} 

Taking the union of $i$-components'  facets colored by $u$, $1\le u\le  N_i$, results in  a closed surface $F_{i,u}(c)$ in $\R^3$. 
If vertices $i$ and $j$ are connected by an edge in $G$, then each circle in the intersection $F_{i,u}(c)\cap  F_{j,v}(c)$ of surfaces $F_{i,u}(c)$ and $F_{j,v}(c)$ from $i$ and $j$ components of foam $F$, with $1\le u \le N_i$ and  $1\le v\le N_j$ contributes the term $x_{i,u} + x_{j,v}$ to the product for the evaluation $\angf{F,c}$. If edges of $G$ are oriented, with $(i,j)$ edge oriented from $i$ to  $j$, it's natural to use the  contribution $x_{j,v}-x_{i,u}$ instead. 

If vertices $i$ and $j$ are not connected by an edge, overlaps of $i$ and $j$ components make no contribution to the evaluation. 
These components in the embedded foam may be  arbitrarily deformed against each other without changing the evaluation. 

Evaluation (\ref{eq_overlap_eval}) admits this straightforward extension to an arbitrary  simply-laced Coxeter graph $G$. It may need to be further tweaked to achieve integrality. One can then  expect  to interpret KLR algebras for simply-laced diagrams and related structures via overlapping foam evaluation. 

Once $G$ has at least three vertices, for each such triple $(i,j,k)$ of vertices one can include in the evaluation the count of triple  intersections of surfaces $F_{i,u}(c),F_{j,v}(c), F_{k,w}(c)$ from $i$, $j$, and $k$ components of foam $F$, via an additional variable $t_{i,j,k}$ for each such triple, perhaps encoded by a power series in $x_i,x_j,x_k$ (similar power series in two variables are used in foam evaluation deformations in~\cite{KK,KKKo}). Triple intersections of $i,j,$ and  $k$ surfaces of colors $u,v,$ and $w$, respectively, would then contribute $t(x_{i,u},x_{j,v},x_{k,w})^{s/2}$ to the  product for a given coloring $c$, where $s$ is the unsigned count of these triple intersections. Scott Carter and Masahico Saito pointed out that the number of such intersections of a triple of closed surfaces embedded in $\R^3$ is even, hence $s/2$ in the exponent above, and intersection points come with signs once surfaces are oriented~\cite{CS}. Simply-laced Coxeter graph $G$ can  then  be upgraded to a decorated 2-dimensional CW-complex with 2-simplices that encode parameters for these new variables. 


\vspace{0.07in} 

Evaluation (\ref{eq_overlap_eval}) may also be  generalized by replacing the two-variable polynomial $x\pm y$ (specializing to $x_i\pm y_j$ in  the evaluations) by more general polynomials $g(x,y)$ in two variables, see Figure~\ref{fig_5_9}, which are  more  complicated reductions of innermost circle overlaps between facets of different types.  
In the language of Coxeter-Dynkin graphs (or diagrams), this corresponds to allowing multiple edges between a pair of vertices.  On the categorified quantum groups side, generalizing the relation in Figure~\ref{fig_5_8} to more general two-variable polynomials, see Figure~\ref{fig_5_10}, leads to the non-simply-laced case~\cite{KL2,R,KaK}.  

\begin{figure}[h]
\begin{center}
\includegraphics[scale=1.0]{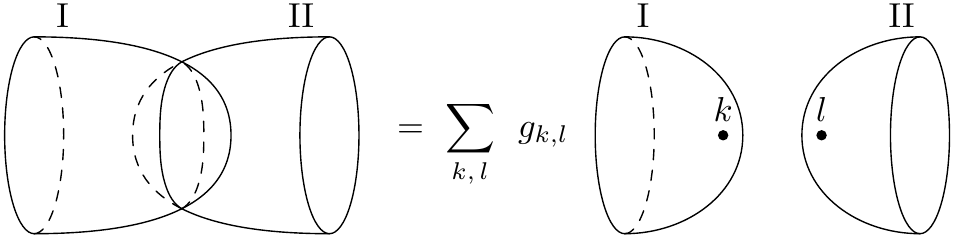}
\caption{Pulling apart 1-facets of different types in a more general case. Coefficients $g_{k\ell}$ encode the polynomial $g(x,y)=\sum_{k,\ell} g_{k\ell} x^k y^{\ell}$.}
\label{fig_5_9}
\end{center}
\end{figure}

\begin{figure}[h]
\begin{center}
\includegraphics[scale=1.0]{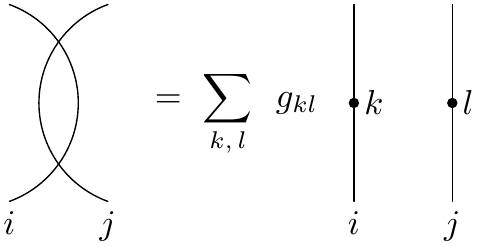}
\caption{Pulling apart $i$ and $j$ strands in categorified quantum group diagrammatics in the non-simply-laced case.}
\label{fig_5_10}
\end{center}
\end{figure}

These observations hint at a substantial theory of generalized foam evaluations waiting to be developed, beyond the one-type case that has been heavily used over the last 15 or so years to understand $GL(N)$ link homology for $N>2$ (when $N=2$ foams can be avoided, for the most part) and got a fully combinatorial description via the Robert-Wagner formula~\cite{RW1}. We refer the reader to~\cite{KK} for more references. 

Strands of different types stand out in the construction of Webster algebras and associated link homology theories~\cite{We1}. Relation in Figure~\ref{fig_5_8} with a minor variation appears in the redotted Webster algebra case, see~\cite[Section  4.2]{KLSY} and~\cite{We2}, and an approach to these algebras, bimodules and associated link invariants via multi-type overlapping foam evaluations may exist as well.

Overlapping foams in the two-type case are also expected to relate to knot Floer homology and Heegaard-Floer homology. Various DG-algebra approaches to the latter exhibit analogues of the skein relations in Figures~\ref{fig_5_8} and~\ref{fig_5_10} and likely admit a suitable foam description. Looking further ahead, basic examples in  this section indicate that such overlapping foams may help to categorify $GL(M|N)$ quantum invariants beyond $M=N=1$ case. 

\vspace{0.1in}

{\it Michael Day's formula and foams.} Closely related to the   structures discussed in this paper is the formula due to Michael Day for the Toeplitz determinant of the  Laurent expansion  of a rational function~\cite{Da,HJ}. Toeplitz determinants  of rational functions appear  throughout  Section~\ref{sec_univ_con_two} above. Day's formula can be interpreted via overlapping foam evaluation as well. Below we use notations from papers~\cite{Da,HJ}, which differ from our earlier notations.  

\begin{theorem}(\cite[Theorem 3.1]{Da} and~\cite[Theorem 4.1]{HJ}) Let $R_1$ and $R_2$ be real numbers such that $0\le R_1<R_2$. Let $D(z)$ be a complex polynomial of degree $k$ with roots $\delta_1,\dots, \delta_k$ satisfying $|\delta_i|\le R_1$, and $F(z)$ a polynomial of degree $h$ with roots $\rho_1,\dots, \rho_h$ satisfying $|\rho_j|\ge R_2$. Let $G(z)$ be a polynomial of degree $p$ with distinct roots $r_1,\dots, r_p$. Normalize the polynomials so that 
\[ D(z)=\prod_{j=1}^k (z-\delta_j), \ \ F(z) = \prod_{j=1}^h (1- \rho_j^{-1}z), \ \ G(z)= \prod_{j=1}^p (z-r_j).
\]
Let $\sum_{\nu=-\infty}^{\infty}a_{\nu}z^{\nu}$ be the Laurent expansion of 
\[ f(z) =\frac{G(z)}{F(z)D(z)}
\]
in the annulus $\{z\in\C | R_1<|z|<R_2\}.$ 
Let $T_n(f)=(a_{i-j}),$  $i,j=0,1,\dots, n$ be the Toeplitz  matrix. Then  if $p=k+m$, $m\ge h$, 
\begin{equation}
  \label{eq_det_day}
    \det T_n(f) =  (-1)^{m(n+1)} \sum_I  \left( T(I)\cdot \prod_{i\in I } r_i^{n+1} \right) ,\ 
\end{equation}
where the sum is over all $m$-element subsets $I$ of $\{1,2,\dots, k+m\}$, $\overline{I}=\{1,2,\dots, k+m\}\setminus I$, and 
\begin{equation}
  \label{eq_eval_d}
    T(I) :=   
    \prod_{\substack{i\in I \\ j \in \overline{I} }}(r_i-r_j)^{-1}\cdot 
    \prod_{\substack{i\in I \\  s\in \{1,\dots, k\}}} 
    (r_i-\delta_s) \cdot   
    \prod_{\substack{j\in \overline{I} \\ t\in \{1,\dots, h\} }}(\rho_t-r_j)\cdot 
    \prod_{\substack{t\in \{1,\dots, h\} \\ s\in \{1,\dots, k\}}} (\rho_t - \delta_s)^{-1}.
\end{equation}
\end{theorem}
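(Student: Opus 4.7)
The plan is to prove Day's formula by partial-fraction decomposition of $f(z)$, a Cauchy--Binet expansion of the Toeplitz determinant, and then a matching with the overlapping-foam evaluation formula (\ref{eq_overlap_eval}). First I would write
\[
f(z) = \frac{G(z)}{F(z)D(z)} = \sum_{s=1}^{k} \frac{A_s}{z-\delta_s} + \sum_{t=1}^{h} \frac{B_t}{z-\rho_t} + P(z),
\]
where $A_s, B_t$ are explicit residues expressible in the $r_i$'s, $\delta_s$'s and $\rho_t$'s, and $P$ is a polynomial of degree $m-h$ that affects $\det T_n(f)$ only in a finite, controllable way for large $n$. On the annulus $R_1 < |z| < R_2$, the $\delta$-summands expand in non-positive powers of $z$ (since $|\delta_s| \le R_1 < |z|$) and the $\rho$-summands in non-negative powers (since $|\rho_t|\ge R_2 > |z|$), giving an explicit closed form for the Laurent coefficient $a_\nu$ as a sum of geometric-progression terms in the $\delta_s$ and $\rho_t^{-1}$.

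Second, $T_n(f) = (a_{i-j})_{i,j=0}^{n}$ decomposes accordingly as a sum of rank-one matrices, each built from a pair of geometric-progression vectors. Applying Cauchy--Binet to the resulting low-rank decomposition and using Vandermonde identities to collapse degenerate terms, $\det T_n(f)$ becomes a sum over choices of $(n+1)$-element index sets from the roots involved. The residues $A_s, B_t$ carry the information of $G$ through its roots $r_1,\dots,r_p$ (with $p = k+m$), and the surviving nonzero minors are naturally indexed by $m$-element subsets $I \subset \{1,\dots,p\}$; the complementary subset $\overline{I}$ of size $k$ parametrizes which residues pair with the $\delta$-side. The factor $\prod_{i\in I} r_i^{n+1}$ arises as the product of the leading entries of the $(n+1)$-long geometric progressions labeled by roots in $I$, and the overall sign $(-1)^{m(n+1)}$ tracks the Cauchy--Binet permutations.

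Third, and this is what is most in the spirit of Section~\ref{sec_SP_formula}, the weight $T(I)$ in (\ref{eq_eval_d}) should be recognized as an overlapping-foam evaluation in the three-type generalization sketched at the end of that section. The three variable groups $\{\rho_t\}$, $\{\delta_s\}$, $\{r_i\}$ sit on thin facets of three overlapping theta-foam components of types $F$, $D$, $G$; the factors $(r_i - \delta_s)$ for $i\in I$ and $(\rho_t - r_j)$ for $j\in \overline{I}$ correspond to intersection circles between facets of different types, exactly as $x_i+y_j$ does in (\ref{eq_overlap_eval}). The splitting of the $r$-roots into $I$ and $\overline{I}$ is then a coloring choice for the $G$-component that decides which of its facets interact with $D$ and which with $F$, and the inverse factor $\prod_{t,s}(\rho_t - \delta_s)^{-1}$ plays the role of the Vandermonde denominator $D_0^{-1}$ in (\ref{eq_sp}).

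The main obstacle will be reconciling the inverse intersection contribution with the Section~\ref{sec_SP_formula} formalism, which as written only produces positive powers of $(x_i+y_j)$-type factors: one must either localize the evaluation ring or, preferably, absorb the $F$--$D$ interaction into a global normalization coming from closing up the $F$- and $D$-facets into a theta-foam whose evaluation formally inverts that Vandermonde, in parallel with $D_0^{-1}$ in the Sergeev--Pragacz derivation. A secondary bookkeeping issue is to track the sign $(-1)^{m(n+1)}$ and the dot exponent $n+1$ through foam orientations and through placing the symmetric polynomial $r_i^{n+1}$ as a dot on each facet indexed by $I$; this matches the role of the partitions $\tau + \delta_M$ and $\eta' + \delta_N$ in the dot pattern of Figure~\ref{fig_5_5}. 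Once these ingredients are pinned down, the Cauchy--Binet expansion and the overlapping-foam evaluation agree term-by-term and yield (\ref{eq_det_day}).
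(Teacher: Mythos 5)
The paper does not prove this theorem at all: it is quoted verbatim from Day~\cite{Da} and H{\o}holdt--Justesen~\cite{HJ}, and the author states explicitly in the introduction that he does ``not give new proofs of the Sergeev-Pragacz or Day formulas and only interpret the expressions in these formulas through evaluation of overlapping foams.'' What follows the theorem in Section~\ref{sec_SP_formula} is a \emph{reinterpretation} of the already-established right-hand side of (\ref{eq_det_day}) as an overlapping-foam evaluation, not a derivation. Your proposal is therefore answering a different question from the one the paper answers: you are attempting a proof where the paper is content to cite one.

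Taken on its own terms, your first two steps (partial fractions, then Laurent coefficients as geometric progressions, then a low-rank decomposition of $T_n(f)$ followed by Cauchy--Binet) sketch a legitimate analytic route to Day's formula, and something in this spirit does underlie the classical proofs. But there is a real gap: the polynomial part $P(z)$ of degree $m-h$ contributes $a_0, a_1, \dots, a_{m-h}$ and hence populates $m-h+1$ superdiagonals of the entire $(n+1)\times(n+1)$ Toeplitz matrix; it is not ``a finite, controllable'' perturbation for large $n$ and cannot be dismissed. The Cauchy--Binet collapse to $m$-element subsets $I$ and the extraction of $\prod_{i\in I}r_i^{n+1}$ also need to be carried out rather than asserted. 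More seriously, your third step is not a proof step. The paper's foam evaluation for the Day configuration is \emph{defined} (``partially a guess,'' in the author's own words) precisely so that $\langle F,c\rangle = (-1)^m T(I)\prod_{i\in I}r_i^{n+1}$; recognizing the Cauchy--Binet output as a foam evaluation is logically identical to recognizing it as $T(I)$, so the foam side supplies no independent leverage. You have also misread the paper's configuration: it involves only two overlapping foams, a $GL(k+m)$ theta-foam $F'$ carrying the $r_i$'s and a single thickness-$(1,1)$ bicolored $2$-sphere $F''$ carrying $\delta_s$'s on one disk and $\rho_t$'s on the other, not a three-type overlapping theta-foam with separate $F$, $D$, $G$ components. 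The $(\rho_t-\delta_s)^{-1}$ factor arises as the declared contribution of the monochrome seam circle of $F''$, parallel to the bichrome-sphere denominators in Robert--Wagner evaluation, and the paper flags it as heuristic rather than resolving the issue the way you speculate.
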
 
Notice that  cross-ratios  
\[(r_i,\delta_s,\rho_t,r_j) \ := \  \frac{(r_i-\delta_s) (\rho_t - r_j )}{(r_i-r_j)(\rho_t - \delta_s)}
\] 
feature prominently in  this formula. The product $T(I)$ can be interpreted as a sort of distributed cross-ratio, where indices of each of the  four families of variables  $r_i,r_j,\delta_s,\rho_t$ are parametrized by elements of finite sets $I,\overline{I},\{1,\dots,k\},\{1,\dots, h\}$, respectively, and one takes a product of differences of variables or their inverses over the four edges of the square below. For each edge, the  product is over all ways  to select a pair of elements, one from each set assigned to vertices of the square. This  can  be depicted diagrammatically by a decorated square in Figure~\ref{fig_6_1}. 

\begin{figure}[h]
\begin{center}
\includegraphics[scale=1.0]{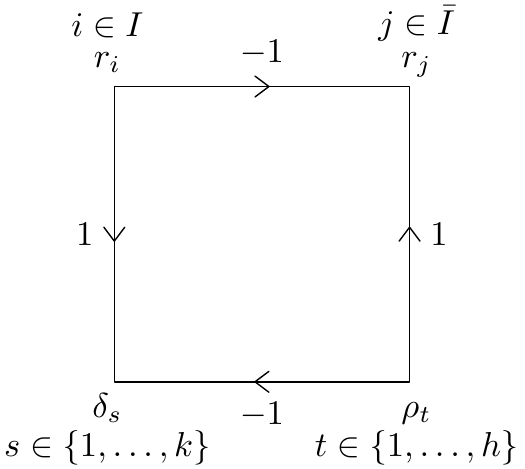}
\caption{Square that encodes the product $T(I)$. Orientations point toward variables  that appear with the minus sign  in  the corresponding differences. Sets to choose the index from are written next to each vertex. Numbers $1$ and  $-1$ on edges of the square indicate the  exponent with which the difference appears}
\label{fig_6_1}
\end{center}
\end{figure}

To give an overlapping foam interpretation of Day's formula, start with a   version of the  $GL(k+m)$ theta-foam which  consists of three disks of  thickness $k,m,k+m$ glued together along the common circle and  use variables $r_1, \dots, r_{k+m}$ instead of the customary $x_1, \dots, x_{k+m}$. Place a dot labelled by the power $e_m(r)^{n+1}$ of the $m$-th elementary symmetric function $e_m(r)$ in variables $r_i$ on the $m$-facet. Consider this  foam $F'$ as being of type I, see Figure~\ref{fig_5_11} left.  

Now add a foam $F''$ of  type II  which is a 2-sphere with two  disk facets  of thickness $1$, glued along a defect circle (or singular circle), see  Figure ~\ref{fig_5_11} right. To these two facets we associate variables  $\delta_1,\dots, \delta_k$ and $\rho_1, \dots, \rho_h$, respectively, and refer to the facets as the $\delta$-disk and the $\rho$-disk.  

\begin{figure}[h]
\begin{center}
\includegraphics[scale=1.0]{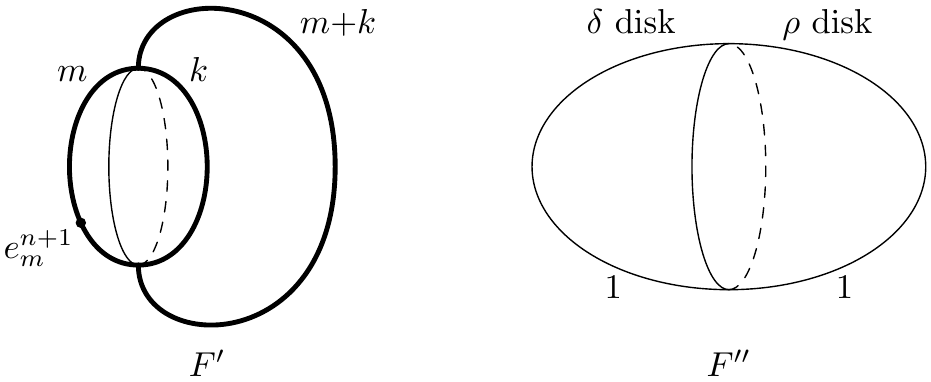}
\caption{Left:  foam $F'$.  Right: foam $F''$. Numbers next to  facets indicate their thickness. Foam $F'$ is shown schematically, via its theta graph cross section. It has a single dot on the $m$-facet carrying label $e_m^{n+1}$.}
\label{fig_5_11}
\end{center}
\end{figure}

Position foams $F'$ and $F''$ to  overlap along two circles so  that $m$-facet of $F'$ intersects $\delta$-facet of $F''$ and  $k$-facet of $F'$ intersects $\rho$-facet of  
$F''$, see Figure~\ref{fig_5_12}. Denote the resulting foam by $F$.   
\begin{figure}[h]
\begin{center}
\includegraphics[scale=1.0]{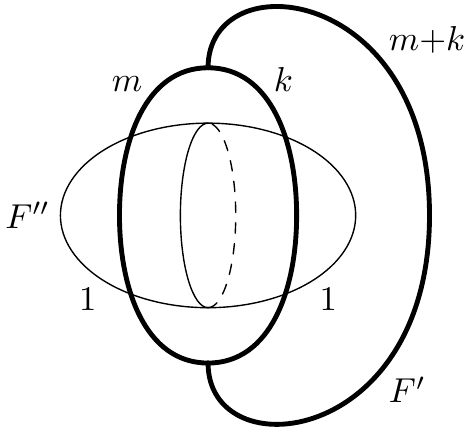}
\caption{Overlapping foams $F'$ and $F''$ form foam $F$.}
\label{fig_5_12}
\end{center}
\end{figure}

To evaluate $F$, we use variables $r_i$'s for $F'$ and variables $\delta_s$ and  $\rho_t$ for $F''$. A coloring $c$ of $F$ consists of a coloring $c'$ of $F'$ and a coloring $c''$ of $F''$, so that $c=(c',c'')$. 

The  largest disk of $F'$ has maximal thickness, so colorings $c'$ of  $F'$ are in a bijection with $m$-element subsets $I$ of $\{1,2,\dots, k+m\}$, indicating the subset assigned to the  $k$-facet of  $F'$. The complementary subset  $\overline{I}$ is assigned to the  $m$-facet of $F'$. For $i\in I$ and  $j\in \overline{I}$ the surface $F_{ij}(c')$ is a 2-sphere, contributing $(x_i-x_j)^{-1}$ to the evaluation $\angf{F,c}$, where $c=(c',c'')$ is a coloring of $F$.

\begin{figure}[h]
\begin{center}
\includegraphics[scale=1.0]{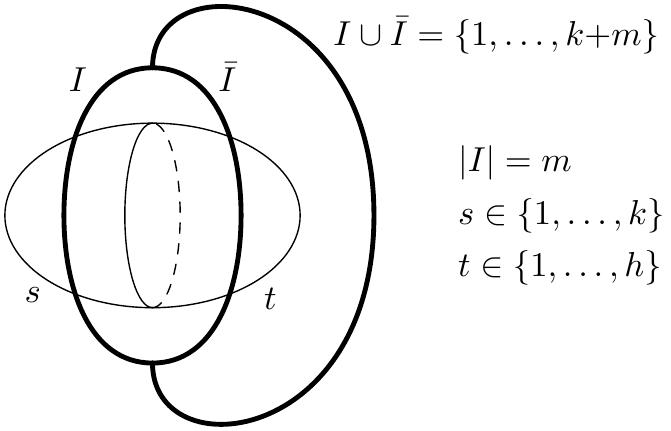}
\caption{Colorings of $F$.}
\label{fig_5_13}
\end{center}
\end{figure}

A coloring  $c''$ of the 2-sphere $F''$ consists of choosing
color $s\in \{ 1,\dots, k\}$ to assign to the $\delta$-disk of $F''$ and color $t\in \{1,\dots, h\}$ to assign to the $\rho$-disk. Both disks, glued along a singular circle, have thickness one. We declare that this coloring  contributes $(\rho_t-\delta_s)^{-1}$ to the evaluation of the foam. This is partially a guess to fit the Day's formula to the foam evaluation framework. It's also motivated by the observation that entries of the form $(x_i\pm y_j)^{-1}$ are common as matrix entries and contributions to related determinants that may carry foam  evaluation interpretation as well, including the Cauchy's double alternant~\cite{Kr} and Moens and Van der Jeugt's determinant for the supersymmetric Schur function~\cite[Theorem 3.4]{MJ1}. 

Furthermore, a similar situation occurs in~\cite{RW2}, where the authors convert an evaluation and state space from a set of variables $\{X_1,\dots, X_k\}$ to the set of variables $\{T_1,\dots, T_N\}$, via division by products $\prod_{j=1}^N(X_i-T_j)$, which can be interpreted with a residue formula having the product of these monomials in the denominator. One natural intepretation of their construction is via an evaluation of a 2-sphere foam glued out of two disks along a defect circle, with the  $X_i$ variables assigned to one disk and $T_j$ variables assigned to the other (in that example the two glued disk facets have thickness $k$ and $N$ rather  than thickness $1$ and $1$ as in Figure~\ref{fig_5_12}). Again, it results in denominators that are products of $X_i-T_j$. 
Putting $X_i-T_j$ in the denominator as the contribution from the 2-sphere with two  disks colored $i$ and  $j$, correspondingly, is also analogous to the original Robert-Wagner evaluation, where a 2-sphere component of the $F_{ij}(c)$ surface contributes $\pm (x_i-x_j)^{-1}$, with  additional sign contribution coming from the singular $(i,j)$-circles on the bicolored 2-sphere.

\begin{figure}[h]
\begin{center}
\includegraphics[scale=1.0]{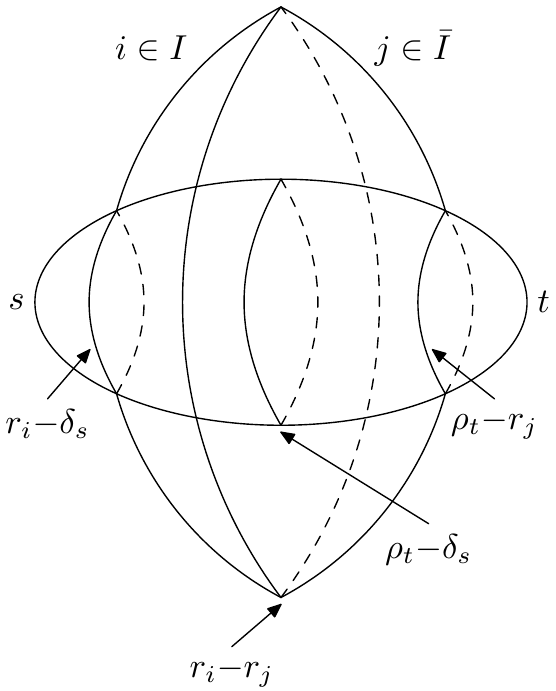}
\caption{Computing the evaluation.}
\label{fig_5_14}
\end{center}
\end{figure}

Going back to our evaluation and looking at Figure~\ref{fig_5_14},
intersection circle of the $m$-facet of $F'$ and the $\delta$-facet of $F''$ contributes the product 
\[\prod_{i\in I} (\delta_s-r_i)=(-1)^m \prod_{i\in I} (r_i-\delta_s)
\] 
to the evaluation $\angf{F,c}$ for a given coloring $c$. In  this normalization, intersection of a type II facet carrying variable $a$ and type I facet carrying variable $b$ contributes $a-b$ to  the product. The  opposite normalization, contributing $b-a$, would only modify the overall evaluation  by a sign, since for this foam the  number of (I,II) intersecting circles is the same for all colorings and equals $m+k$. 
Intersection circle of the $k$-facet of $F'$ and the $\rho$-facet of $F''$ contributes the product 
$\prod_{j\in \overline{I}} (\rho_t-r_j)$ to the evaluation. 
We chose to use differences $\delta_s-r_i$ and $\rho_t-r_j$ in this definition rather than their negations to match the term $(-1)^m$ in the sign of (\ref{eq_det_day}) instead of getting $(-1)^k$. 

Putting everything together, for a given coloring $c$ the evaluation is   
\begin{equation}
  \label{eq_eval_d2}
    \angf{F,c} =  (-1)^{m} \cdot T(I)\cdot 
    \prod_{{i\in I }}  r_i^{n+1} 
    ,
\end{equation}
with $T(I)$ given by formula (\ref{eq_eval_d}). Consequently, 
\begin{equation}
    \det T_n(f) = (-1)^{mn} \angf{F}, 
\end{equation}
recovering Day's expression for the Toeplitz determinant of a rational function Laurent series via foam evaluation as proposed here. 

\vspace{0.1in}



\end{document}